\documentclass[preprint]{elsarticle}
\oddsidemargin 0in \textwidth 6.0in \textheight 8.0in
\usepackage{lineno}

\journal{}

\usepackage{stmaryrd}
\usepackage{slashbox}
\usepackage{amsmath}
\usepackage{amssymb}
\usepackage{amsthm}
\usepackage{graphicx}
\usepackage{epic,eepic,epsfig}
\usepackage{color}
\usepackage{subfigure}  
\usepackage{placeins}   
\usepackage{multirow}
\usepackage{epstopdf}
\usepackage{varwidth}
\usepackage{float}
\usepackage[table]{xcolor}

\usepackage{algorithm}
\usepackage{algorithmic}

\pagestyle{plain}

\newtheorem{theorem}{Theorem}[section]
\newtheorem{example}{Example}[section]
\newtheorem{lemma}{Lemma}[section]
\newtheorem{remark}{Remark}[section]

\usepackage{lineno} 

\newcommand{\BLbrace}{ \left\{\kern -0.36em\left\{     }
\newcommand{\BRbrace}{ \right\}\kern -0.36em\right\} }
\newcommand{\Lbrace}{ \left\{\kern -0.23em\left\{     }
\newcommand{\Rbrace}{ \right\}\kern -0.23em\right\} }

\newcommand{\BLbracket }{ \left[\kern -0.19em\left[    }
\newcommand{\BRbracket }{ \right] \kern-0.19em\right] }
\newcommand{\Lbracket }{ \left[\kern -0.09em\left[    }
\newcommand{\Rbracket }{ \right] \kern-0.09em\right] }

\setlength{\tabcolsep}{8pt}

\allowdisplaybreaks[4]

\begin{document}

\begin{frontmatter}
	
\title{A Posterior Error Estimator for Mixed Interior Penalty Discontinuous Galerkin Finite Element Method for the  $\boldsymbol{H}(\mathrm{curl})$-Elliptic Problems}

\author[SCNU]{Ming Tang}
\ead{mingtang@m.scnu.edu.cn}

\author[SCNU]{Xiaoqing Xing\corref{cor}}
\ead{xingxq@scnu.edu.cn}

\author[SCNU]{Liuqiang Zhong}
\ead{zhong@scnu.edu.cn}

\cortext[cor]{Corresponding author}
\address[SCNU]{School of Mathematical Sciences, South China Normal University, Guangzhou 510631, China}

\begin{abstract}
In this paper, we  design the first residual type a posteriori error estimator for mixed interior penalty discontinuous Galerkin method for the $\boldsymbol{H}(\mathrm{curl})$-elliptic problems. Then we prove that our residual based a posteriori error indicator is both reliable and efficient. At last, we present some numerical experiments to validate the performance of the indicator within an adaptive mesh refinement procedure.
\end{abstract}

\begin{keyword}
$\boldsymbol{H}(\mathrm{curl})$-elliptic problems, mixed interior penalty discontinuous Galerkin method, a posterior error estimator, reliability, efficiency.
\end{keyword}

\end{frontmatter}


\section{Introduction} 
 In this work, we consider the  $\boldsymbol{H}(\mathrm{curl})$-elliptic problems as follows: find the electric or magnetic field $\boldsymbol{u}$ satisfy
 \begin{eqnarray}
 \label{Equ:1.1}
 \boldsymbol{curl}(\alpha \mathrm{curl}~\boldsymbol{u}) + \beta\boldsymbol{u}=\boldsymbol{f}, &&\ \mbox{in}\ \Omega,\\ \label{Equ:1.2}
 \boldsymbol{u}\cdot\boldsymbol{t} =0, && \mbox{on}\ \partial \Omega,
 \end{eqnarray}
 where $\Omega\subset \mathbb{R}^2$ be a simply connected bounded Lipschitz polygon with boundary $\partial\Omega$ and is partitioned into non-overlapping subdomains $\Omega_i$, $1\leq i \leq m$, $\boldsymbol{f}$ is a given vector field depending on a given external source field, $\boldsymbol{t}$ is the unit tangent on $\partial\Omega$ oriented counter-clockwisely, $\alpha\geq\alpha_0>0$ and $\beta\geq\beta_0>0$ are piecewise constants in $\Omega_i$, $\alpha_0$ and $\beta_0$ are constans.
 We recall that, $\mathrm{curl}\  \boldsymbol{v}={\partial v_2}/{\partial x}-{\partial v_1}/{\partial y}$ for a vector field $\boldsymbol{v}=(v_1, v_2)$, while $\boldsymbol{curl}\ \phi =({\partial \phi}/{\partial y}, -{\partial \phi}/{\partial x})$ for a scalar function $\phi$.
 Our numerical scheme and the a posteriori error analysis are based on
 a mixed formulation of \eqref{Equ:1.1}-\eqref{Equ:1.2}, which is obtained by introducing an auxiliary variable $p = \mathrm{curl}~\boldsymbol{u}$
 \begin{eqnarray} \label{Eqn:hhcurl}
 \boldsymbol{curl}~(\alpha p) + \beta\boldsymbol{u} = \boldsymbol{f},&& \mbox{in}\ \Omega,
 \\ \label{Eqn:hq} 
 p - \mathrm{curl}~\boldsymbol{u} = 0, && \mbox{in}\ \Omega,
 \\ \label{Eqn:hboundary}
 \boldsymbol{u} \cdot \boldsymbol{t} = 0,&& \mbox{on}\ \partial\Omega.
 \end{eqnarray}
 
 Discontinuous Galerkin (DG) finite element method is one of popular methods for numerical solution of partial differential equations.
 Compared with the traditional conforming finite element method, the DG finite element method has advantages as follows: to allow incompatible with suspension point grid,  to deal with complex boundary and interface problems easily, and to implement partial encryption and each unit of polynomial independent selection easily.
 One of the key features of the DG method is that the discontinuous approximation at element interfaces naturally allows jump discontinuities in the solution if element boundaries are placed along them \cite{KoprivaGassner21:1}.
 DG method has been developed to solve many equations, such as elliptic problems \cite{BrezziManzini00:365}, parabolic equations \cite{Riviere08Book}, advection-diffusion-reaction problems \cite{HoustonSchwab02:2133}.
 The DG methods include locally DG(LDG) method \cite{Castillo02:524}, interior penalty DG(IPDG) method \cite{Arnold82:742}.
 The discontinuous finite element method for $\boldsymbol{H}(\mathrm{curl})-$elliptic problems is still in its infancy.
 Chung and Kim \cite{ChungKim14:1} proposed an improved Feti-DP algorithm and convergence analysis for the mixed interleaved discontinuous finite element method for the two-dimensional $\boldsymbol{H}(\mathrm{curl})-$elliptic problems.
 
 On the other hand,
 in practical engineering applications and scientific calculations, there are many factors that may cause strong singularities in the propagation of electromagnetic fields. For example, the material coefficient of the medium in the electromagnetic wave propagation area is discontinuous, or the source term of the generated electromagnetic field is not smooth \cite{CostabelDauge00:221,CostabelDauge03:807}. Although these singularities can be overcomed by uniformly densifying the grid when performing numerical solutions, consistent densification can lead to a sharp increase in computational cost. Hence, adaptive finite element emerges as the times require.
 In the past few decades, adaptive finite element method have been proven to be a useful and effective tool in scientific computing. The standard adaptative process is as follows
 $
 \text { SOLVE } \rightarrow \text { ESTIMATE } \rightarrow \text { MARK } \rightarrow \text { REFINE. }
 $
 The adaptive finite element method is based on a posteriori error estimation.
 It automatically refines and optimizes mesh generation according to the local posteriori error indicator on the element.
 It is a numerical calculation method with high reliability and efficiency.
 
 Most of the work on the convergence of the adaptive method for the $\boldsymbol{H}(\mathrm{curl})$-elliptic equations focuses on the edge finite element.
 For example,
 using the so-called interior node property and oscillation marker as technical assumptions, the convergence of the lowest order edge elements of the  N\'{e}d\'{e}lec's first family of adaptive  for two-dimensional and three-dimensional eddy current equations are proved in [4,14]
 , respectively.
 Chen, Xu and Zou \cite{ChenJQXuYF09:2950} proved that an adaptive method for three dimensional static Maxwell equations without additional marking of oscillation terms and gives corresponding proof of convergence with the lowest order edge elements of  N\'{e}d\'{e}lec's first family.
 Zhong, Shu, Chen and Xu \cite{Zhong10AEFEM}
 proved that the three-dimensional $\boldsymbol{H}(\mathrm{curl})$-elliptic problem with variable coefficients is convergent by using high order and the two family of N\'{e}d\'{e}lec edge elements.
 There are also some studies on the posteriori error estimator of the adaptive DG finite element method for $\boldsymbol{H}(\mathrm{curl})$-elliptic problem \cite{HoustonPerugia07:122,Xingzhong12:18Eng}.
 Houston, Perugia and Schotzau \cite{HoustonPerugia07:122} gave the residual-type posteriori error estimator and proved the reliability and efficiency of the error estimator. Xing and Zhong \cite{Xingzhong12:18Eng} gave a simplified posteriori error indicator and proved corresponding upper bound.
 Recently, Zhong, Chen and Xing \cite{ZhongLQChengTEng16:92} proved the convergence of the adaptive interior penalty DG methods.
 
 Meanwhile,
 there are many successful works of solving the Maxwell's equations by the mixed finite element method, e.g. [18,19,21,15]. 
 However,
 For adaptive mixed finite element method solving Maxwell's equations, there are only few research results for a posterior error estimator.
 For example,
 Carstensen, Hoppe, Sharma and Warburton \cite{CarstensenHoppepe11:13} studied a posteriori error estimation of the hybridized finite element method and proved the reliability of the estimator up to a consistency error.
 Chung, Yuen and Zhong \cite{ChungYuen14:613} studied a posteriori error estimation of the  staggered discontinuous Galerkin method for time-harmonic Maxwell's equations and proved that residual based a posteriori error indicator is both reliable and efficient.
 As far as we know, there are not any published literatures on the posteriori error estimation of the adaptive mixed finite element method for $\boldsymbol{H}(\mathrm{curl})-$elliptic problems \eqref{Equ:1.1}-\eqref{Equ:1.2}. The main idea of the manuscript comes from \cite{ChungYuen14:613}.
 However, one of main tool, a Cl\'{e}ment-type quasi-interpolation operator given by \cite{Schoberl08:633},
 can not be used for 2D finite element space.
 Here, we use the Helmholtz decomposition and operators in articles \cite{ScottZhang90:483,CarstensenHoppe05:19}
 for estimation.
 
 Here is some notation used throughout the paper. The following shorthand notation will be used to avoid the repeated constants, following \cite{XuJC92:581},
 $x\lesssim y$ and $x\approx y$ means $x \leq C_1y$ and $C_2x\leq y\leq C_3x$,  where $C_1$, $C_2$ and $C_3$ are generic positive constants.
 
 The rest of the article is organized as follows.
 In Section \ref{sec:2}, we introduce some basic notations, present the variational form of the model problem \eqref{Equ:1.1}-\eqref{Equ:1.2}, and design a residual type a posteriori error estimator.
 In Section \ref{sec:4} and Section \ref{sec:5}, we show that this indicator is reliable and effective, respectively.
 In Section \ref{sec:6},  we report some numerical results in support of theoretical results.
 

 \section{Mixed IPDG  method and a posteriori error indicator}\label{sec:2}
 In this section, we give the continuous variational problem, the discrete variational problem of mixed IPDG method, and the definition of the a posteriori error indicator.
 
 \subsection{Continuous variational problem}
 
 For any domain $D \subset \mathbb{R}^{2}$, we use standard definitions for the Sobolev spaces $H^{s}(D)$ and $\boldsymbol{H}^{s}(D)$ of scalar and vector-valued square integrable functions with inner products $(\cdot,\cdot)_{s,D}$ and  associated norms $\|\cdot\|_{s, D}$ for $s \geq 0$, respectively.
 We refer to $L^{2}(D)$ and $\mathbf{L}^{2}(D)$ as the Hilbert spaces of scalar and vector-valued square integrable functions with inner products $(\cdot, \cdot)_{0, D}$ and associated norms $\|\cdot\|_{0, D}$, respectively.
 For simplicity, we drop the subscript when $G = D$. Then, the spaces are defined by
 \begin{eqnarray*}
 	&\boldsymbol{H}(\mathrm{curl}, \Omega):=\left\{\boldsymbol{v}: \boldsymbol{v} \in\boldsymbol{L}^{2}(\Omega), \mathrm{curl}~ \boldsymbol{v} \in L^{2}(\Omega)\right\},\\
 	& \boldsymbol{H}_0(\mathrm{curl}, \Omega) := \left\{\boldsymbol{v}: \boldsymbol{v} \in\boldsymbol{H}(\mathrm{curl},\Omega), \boldsymbol{v}\cdot\boldsymbol{t} = 0~on~\partial\Omega \right\}.
 \end{eqnarray*}
 The space $\boldsymbol{H}(\mathrm{curl},\Omega)$ is equipped with norm $\|\boldsymbol{v}\|^2_{\mathrm{curl},\Omega} := \|\boldsymbol{v}\|^2_{0,\Omega} + \|\mathrm{curl}~\boldsymbol{v}\|^2_{0,\Omega}$ for any $\boldsymbol{v}\in \boldsymbol{H}(\mathrm{curl},\Omega)$.
 
 We simplify the symbols $\boldsymbol{H}_0(\mathrm{curl},\Omega)$ and $L^2(\Omega)$ to $\boldsymbol{U}$ and $\mathbb{Q}$, respectively.
 In this manuscript, we assume that $\boldsymbol{f}\in\boldsymbol{H}(\operatorname{div}, \Omega)=\{\boldsymbol{v}: \boldsymbol{v} \in \boldsymbol{L}^{2}(\Omega), \nabla \cdot \boldsymbol{v} \in L^{2}(\Omega)\}$.
 The variational form for \eqref{Eqn:hhcurl}-\eqref{Eqn:hboundary} is to find  $(\boldsymbol{u},p)\in \boldsymbol{U}\times\mathbb{Q}$ such that
 \begin{eqnarray}
 &&a({p}, {q})-b(\boldsymbol{u}, {q})=\ell_{1}({q}), ~~\forall {q} \in \mathbb{Q},\label{Eqn:weak_1} \\
 &&d(\boldsymbol{v}, {p})+c(\boldsymbol{u}, \boldsymbol{v})=\ell_{2}(\boldsymbol{v}),~  \forall \boldsymbol{v} \in \boldsymbol{U},\label{Eqn:weak_2}
 \end{eqnarray}
 where the four bilinear forms given by
 \begin{eqnarray}
 &&a({p}, {q}):=({p}, {q}), \label{Eqn:a}\\
 &&b(\boldsymbol{u}, {q}):=(\mathrm{curl}~ \boldsymbol{u}, {q}), \label{Eqn:b}\\
 &&c(\boldsymbol{u}, \boldsymbol{v}):=(\beta\boldsymbol{u}, \boldsymbol{v}), \label{Eqn:c}\\
 &&d(\boldsymbol{v}, {p}) := (\mathrm{curl}~\boldsymbol{v}, \alpha{p}),\label{Eqn:d}
 \end{eqnarray}
 and two linear functionals $\ell_{1}(\cdot) \in \mathbb{Q}^{*}$, $\ell_{2}(\cdot) \in \boldsymbol{U}^{*}$, where $\mathbb{Q}^{*}$ and $\boldsymbol{U}^{*}$ are the dual spaces of $\mathbb{Q}$ and $\boldsymbol{U}$, respectively,  as follows
 \begin{eqnarray}
 &&\ell_{1}({q}):=0, \label{Eqn:l_1}\\
 &&\ell_{2}(\boldsymbol{v}):=(\boldsymbol{f}, \boldsymbol{v}).\label{Eqn:l_2}
 \end{eqnarray}
 
 In order to prove the well-posedness of continuous variational problem \eqref{Eqn:weak_1}-\eqref{Eqn:weak_2} and the reliability of a posteriori error indicator(see Lemma \ref{Lem:leqell_1ell_2}).
 We also define the operator $\mathcal{A}:(\boldsymbol{U}\times\mathbb{Q}  ) \mapsto(\boldsymbol{U}\times\mathbb{Q}  )^{*}$ by
 $$
 (\mathcal{A}( \boldsymbol{u},p))(\boldsymbol{v},q):=a(p, q)-b(\boldsymbol{u}, q)+d(\boldsymbol{v}, p)+c(\boldsymbol{u}, \boldsymbol{v}), \ \ \text {for all } \boldsymbol{u}, \boldsymbol{v} \in \boldsymbol{U},  p, q \in \mathbb{Q}.
 $$
 Thus, the operator form of the equations \eqref{Eqn:weak_1}-\eqref{Eqn:weak_2} is obtained
 \begin{equation}\label{Eqn:Al}
 (\mathcal{A}(\boldsymbol{u},{p}))(\boldsymbol{v},{q} )=\ell(\boldsymbol{v},{q}),
 \end{equation}
 where $\ell(\boldsymbol{v},{q})=\ell_{2}(\boldsymbol{v})+\ell_{1}({q})$.
 
 The following lemma provides the existence and uniqueness of solutions to the variational problem \eqref{Eqn:weak_1}-\eqref{Eqn:weak_2}.
 
 \begin{lemma}[\cite{ChungYuen14:613}, Lemma 2.1]\label{WVP}
 	Let $\Omega$ be a bounded Lipschitz polygon with connected boundary $\partial\Omega$. Then $\mathcal{A}$ is a  continuous and bijective linear operator. Moreover, for any $\left(\ell_{1}, \ell_{2}\right) \in \mathbb{Q}^{*} \times \boldsymbol{U}^{*}$ given by \eqref{Eqn:dweak_1} and \eqref{Eqn:dweak_2}, respectively, then the system \eqref{Eqn:weak_1}-\eqref{Eqn:weak_2} has a unique solution $(\boldsymbol{u},{p}) \in \boldsymbol{U}\times\mathbb{Q}$ such that
 	\begin{equation}
 	\|(\boldsymbol{u}, {p})\|_{\boldsymbol{U} \times \mathbb{Q}}:=\left(\|\boldsymbol{u}\|_{ {\boldsymbol{U}}}^{2}+\|{p}\|_{\mathbb{Q}}^{2}\right)^{1 / 2} \lesssim
 	\|\ell_{1}\|_{\mathbb{Q}^{*}}
 	+
 	\|\ell_{2}\|_{\boldsymbol{U}^{*}}\label{Eqn:up},
 	\end{equation}
 	where $\|\cdot\|_{\mathbb{Q}^*}$ and $\|\cdot\|_{\boldsymbol{U}^*}$ are dual norms in $\mathbb{Q}^*$ and $\boldsymbol{U}^*$, respectively.
 \end{lemma}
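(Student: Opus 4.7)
The plan is to reduce the mixed system to the primal $\boldsymbol{H}(\mathrm{curl})$-elliptic problem and then invoke Lax--Milgram, rather than verifying Brezzi's inf-sup conditions directly. First I would establish continuity of $\mathcal{A}$: since $\alpha,\beta$ are bounded piecewise constants and each of the four bilinear forms $a,b,c,d$ consists of an $L^2$ inner product against $p$, $q$, $\mathrm{curl}\,\boldsymbol{u}$, or $\mathrm{curl}\,\boldsymbol{v}$, repeated application of Cauchy--Schwarz gives $|(\mathcal{A}(\boldsymbol{u},p))(\boldsymbol{v},q)|\lesssim \|(\boldsymbol{u},p)\|_{\boldsymbol{U}\times\mathbb{Q}}\,\|(\boldsymbol{v},q)\|_{\boldsymbol{U}\times\mathbb{Q}}$.

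Next I would prove bijectivity by elimination of $p$. Equation \eqref{Eqn:weak_1} says $(p,q)=(\mathrm{curl}\,\boldsymbol{u},q)+\ell_1(q)$ for all $q\in\mathbb{Q}$, so by Riesz representation $p=\mathrm{curl}\,\boldsymbol{u}+r_1$ where $r_1\in L^2(\Omega)$ satisfies $(r_1,q)=\ell_1(q)$ and $\|r_1\|_0=\|\ell_1\|_{\mathbb{Q}^*}$. Substituting into \eqref{Eqn:weak_2} yields the reduced primal problem: find $\boldsymbol{u}\in\boldsymbol{U}$ such that
\begin{equation*}
(\alpha\,\mathrm{curl}\,\boldsymbol{u},\mathrm{curl}\,\boldsymbol{v})+(\beta\boldsymbol{u},\boldsymbol{v})
=\ell_2(\boldsymbol{v})-(\alpha r_1,\mathrm{curl}\,\boldsymbol{v}),\quad \forall \boldsymbol{v}\in\boldsymbol{U}.
\end{equation*}
The left-hand side is symmetric, continuous, and coercive on $\boldsymbol{U}$ because $\alpha\geq\alpha_0>0$ and $\beta\geq\beta_0>0$ imply
\begin{equation*}
(\alpha\,\mathrm{curl}\,\boldsymbol{u},\mathrm{curl}\,\boldsymbol{u})+(\beta\boldsymbol{u},\boldsymbol{u})\geq \min(\alpha_0,\beta_0)\,\|\boldsymbol{u}\|_{\mathrm{curl},\Omega}^2.
\end{equation*}
The right-hand side is a bounded linear functional on $\boldsymbol{U}$ with norm $\lesssim \|\ell_2\|_{\boldsymbol{U}^*}+\|\ell_1\|_{\mathbb{Q}^*}$. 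Hence Lax--Milgram furnishes a unique $\boldsymbol{u}\in\boldsymbol{U}$ with $\|\boldsymbol{u}\|_{\mathrm{curl},\Omega}\lesssim \|\ell_1\|_{\mathbb{Q}^*}+\|\ell_2\|_{\boldsymbol{U}^*}$. Setting $p:=\mathrm{curl}\,\boldsymbol{u}+r_1$ gives the unique $p\in\mathbb{Q}$, with $\|p\|_0\leq\|\mathrm{curl}\,\boldsymbol{u}\|_0+\|r_1\|_0$, and the pair $(\boldsymbol{u},p)$ satisfies both \eqref{Eqn:weak_1}--\eqref{Eqn:weak_2}.

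Combining the two bounds yields the desired stability estimate \eqref{Eqn:up}, and bijectivity of $\mathcal{A}$ follows from the one-to-one correspondence between $(\ell_1,\ell_2)$ and $(\boldsymbol{u},p)$ established above. The only delicate point in this plan is the reduction step: one must check that the substitution $p=\mathrm{curl}\,\boldsymbol{u}+r_1$ is consistent and that the resulting primal problem is genuinely equivalent to the mixed one, which is immediate here because $\mathrm{curl}:\boldsymbol{U}\to\mathbb{Q}$ maps into the test space used in \eqref{Eqn:weak_1}. No additional topological assumption on $\Omega$ is needed beyond Lipschitz connected boundary, since coercivity comes from the positive mass term $\beta\boldsymbol{u}$ and not from any Poincaré-type inequality on divergence-free fields.
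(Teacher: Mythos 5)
Your argument is correct, and it is worth noting that the paper itself offers no proof of this lemma: it simply cites Lemma 2.1 of the reference by Chung, Yuen and Zhong, where the corresponding well-posedness result is established for the (indefinite) time-harmonic Maxwell system, essentially via inf-sup/Fredholm-type reasoning that must cope with the non-coercive $-\omega^2$ term. Your route is genuinely different and, in the present setting, simpler: because $\beta\geq\beta_0>0$ the zeroth-order term makes the Schur-complement form coercive, so eliminating $p$ through the Riesz representative $r_1$ of $\ell_1$ (using that $\mathrm{curl}:\boldsymbol{U}\to\mathbb{Q}=L^2(\Omega)$ lands exactly in the test space of \eqref{Eqn:weak_1}, so $p=\mathrm{curl}\,\boldsymbol{u}+r_1$ a.e.) reduces the mixed system to a primal problem to which Lax--Milgram applies, and the stability bound \eqref{Eqn:up} with the $p$-reconstruction follows directly; injectivity and surjectivity of $\mathcal{A}$ then come from uniqueness and existence for arbitrary $(\ell_1,\ell_2)\in\mathbb{Q}^*\times\boldsymbol{U}^*$, since any $\ell\in(\boldsymbol{U}\times\mathbb{Q})^*$ splits into such a pair. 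What the citation buys the authors is brevity; what your argument buys is a self-contained, elementary proof that moreover makes explicit that no topological hypothesis (simple connectedness, connected boundary) is actually needed here, the coercivity coming from the mass term rather than from any Poincar\'e-type inequality. Two cosmetic points you could not have known: the constants $\|\alpha\|_{0,\infty}$, $\|\beta\|_{0,\infty}$ entering your continuity bounds are finite because $\alpha,\beta$ are piecewise constant over finitely many subdomains, and the statement's reference to $(\ell_1,\ell_2)$ "given by \eqref{Eqn:dweak_1} and \eqref{Eqn:dweak_2}" is a typo in the paper for the continuous functionals \eqref{Eqn:l_1}--\eqref{Eqn:l_2}, which is what your proof correctly uses.
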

 
 \subsection{Discrete variational problem}
 
 Before presenting the discrete variational problem, we introduce some preliminaries.
 Given a shape-regular triangulation $\mathcal{T}_h$ for $\Omega$. For $\tau\in\mathcal{T}_h$, we write $h_{\tau} = |\tau|^{1/2} $ to denote
 the local mesh size of the element $\tau$, where $|\tau|$ is the Lebesgue measure of $\tau$. Let $h=\max_{\tau\in\mathcal{T}_h} h_{\tau}$.
 
 Let  $\mathcal{E}_h$ be the set of all the edges, $\mathcal{E}_h^0=\mathcal{E}_h\backslash \partial \Omega$ be the set of all the interior edges, and $\mathcal{E}_h^{\partial}=\mathcal{E}_h\cap \partial \Omega$ be the set of all the boundary edges, then $\mathcal{E}_h=\mathcal{E}_h^0\bigcup\mathcal{E}_h^{\partial}$.
 
 For $\mathcal{T}^{\prime}_h\subseteq \mathcal{T}_h$ and $\mathcal{E}_h^{\prime}\subseteq \mathcal{E}_h$,the discrete $L^2$ inner product and norm are given by
 \begin{eqnarray*}
 	& \displaystyle
 	(\boldsymbol{v}, \boldsymbol{w})_{\mathcal{T}^{\prime}_h}
 	=\sum\limits_{\tau \in \mathcal{T}^{\prime}_h}(\boldsymbol{v}, \boldsymbol{w})_{\tau}
 	=\sum\limits_{\tau\in \mathcal{T}^{\prime}_h} \int_{\tau} \boldsymbol{v}\cdot \boldsymbol{w} \mathrm{d} x, \quad\|\boldsymbol{v}\|_{\mathcal{T}^{\prime}_h}^{2}
 	=(\boldsymbol{v}, \boldsymbol{v})_{\mathcal{T}^{\prime}_h},
 	\\
 	& \displaystyle
 	\langle \boldsymbol{v}, \boldsymbol{w} \rangle_{\mathcal{E}_h^{\prime}}
 	=\sum\limits_{e \in \mathcal{E}_h^{\prime}} \langle \boldsymbol{v}, \boldsymbol{w} \rangle_{e}
 	=\sum\limits_{e \in \mathcal{E}_h^{\prime}} \int_{e} \boldsymbol{v}\cdot \boldsymbol{w}\mathrm{d} s, \quad \|\boldsymbol{v}\|_{\mathcal{E}_h^{\prime}}^{2}
 	=\langle \boldsymbol{v}, \boldsymbol{v} \rangle_{\mathcal{E}_h^{\prime}}.
 \end{eqnarray*}
 
 For any $e\in\mathcal{E}_h^{0}$ with $e = \partial\tau_1\cap\partial\tau_2$, we define the average, tangential jump and normal jump for a vector function $\boldsymbol{w}$ by
 \begin{eqnarray*}
 	&	\{\{\boldsymbol{w}\}\}_e = (\boldsymbol{w}|_{\tau_1} + \boldsymbol{w}|_{\tau_2})/{2}, 
 	\\
 	&	[[ \boldsymbol{w}]]_e= \boldsymbol{w}|_{\tau_1}\cdot\boldsymbol{t}_1 + \boldsymbol{w}|_{\tau_2}\cdot\boldsymbol{t}_2, 
 	\\ 
 	&\ [\boldsymbol{w}]_e = \boldsymbol{w}|_{\tau_1}\cdot \boldsymbol{n}_1 + \boldsymbol{w}|_{\tau_2}\cdot \boldsymbol{n}_2,
 \end{eqnarray*}
 where $\boldsymbol{w}|_{\tau_i}$ denotes the value of $\boldsymbol{w}$ on $\tau_i$, $\boldsymbol{t}_i$ and $\boldsymbol{n}_i$ are the unit tangential vectors and the outward unit normal vectors on $e$ for $\tau_i$ ($i=1,2$), respectively.
 
 Similarly, we define the average and the tangential jump on $e$ for a scalar function $\phi$ as
 \begin{equation*}
 \{\{\phi\}\}_{e}=(\left.\phi\right|_{\tau_{1}}+\left.\phi\right|_{\tau_{2}}) / 2, ~[[\phi]]_{e}=\left.\phi\right|_{\tau_{1}} \boldsymbol{t}_{1}+\left.\phi\right|_{\tau_{2}} \boldsymbol{t}_{2},
 \end{equation*}
 where $\phi|_{\tau_i}$ denotes the value of $\phi$ on $\tau_i$, $i=1,2$.
 
 For any $e\in\mathcal{E}_h^{\partial}$, there is a element $\tau\in\mathcal{T}_h$ such that $e\in\partial\tau \cap\partial\Omega$, we define the average,  tangential jump and normal jump for a vector function $\boldsymbol{w}$ are defined as
 \begin{eqnarray*}
 	\{\{ \boldsymbol{w}\}\}_e=\boldsymbol{w}|_{\tau}, \  [[ \boldsymbol{w}]]_e=\boldsymbol{w}|_\tau\cdot\boldsymbol{t},\
 	[\boldsymbol{w}]_e = \boldsymbol{w}|_{\tau}\cdot\boldsymbol{n}
 \end{eqnarray*}
 where $\boldsymbol{w}|_{\tau}$ denotes the value of $\boldsymbol{w}$ on $\tau$ and $\boldsymbol{n}$ denotes the outward unit normal vectors on $e$ for $\tau$.
 
 For a scalar function $\phi$, its average and  tangential jump on $e$ are defined as
 \begin{equation*}
 \{\{\phi\}\}_{e}=\phi|_\tau, ~[[\phi]]_{e}=\phi|_{\tau} \boldsymbol{t},
 \end{equation*}
 where $\phi|_{\tau}$ denote the value of $\phi$ on $\tau$.
 
 The DG methods are based on the approximation of the vector field $\boldsymbol{u}$ and $p$ by elementwise polynomials, thus giving rise to the finite dimensional function spaces
 \begin{eqnarray*}
 	&&\boldsymbol{U}_{h}:=\left\{\boldsymbol{v}_{h} \in \mathbf{L}^{2}(\Omega)\left|~\boldsymbol{v}_{h}\right|_{\tau} \in \mathcal{R}_{1}(\tau),~\boldsymbol{v}_h|_e = 0, \forall \tau \in \mathcal{T}_{h}\right\}, \\
 	&&\mathbb{Q}_{h}:=\left\{{q}_{h} \in {L}^{2}(\Omega)\left|~{q}_{h}\right|_{\tau} \in P_0(\tau), \forall \tau \in \mathcal{T}_{h}\right\},
 \end{eqnarray*}
 where $\mathcal{R}_{1}(\tau)=\left\{\exists \boldsymbol{\alpha} \in \mathbb{R}^{2}, \exists \beta \in \mathbb{R}, \forall \boldsymbol{x}=\left(x_{1}, x_{2}\right) \in \tau: \boldsymbol{q}(\boldsymbol{x})=\boldsymbol{\alpha}+\beta\left(-x_{2}, x_{1}\right)\right\}$,
 and $P_{0}(\tau)$ denotes the constant in $\tau$.
 
 Now, we present the  mixed interior penalty discontinuous Galerkin(MIPDG) finite element method for the system \eqref{Eqn:hhcurl}-\eqref{Eqn:hq}: find $(\boldsymbol{u}_h,p_h)\in \boldsymbol{U}_h\times\mathbb{Q}_h$ such that
 \begin{eqnarray}
 a_h({p}_h, {q}_h)-b_h(\boldsymbol{u}_h, {q}_h)
 &=&
 \ell_{1,h}({q}_h)+d_{1,h}(\boldsymbol{u}_h,q_h),
 \quad  \forall {q}_h \in \mathbb{Q}_h,\label{Eqn:dweak_1} \\
 d_h(\boldsymbol{v}_h, {p}_h)+c_h(\boldsymbol{u}_h, \boldsymbol{v}_h)
 &=&
 \ell_{2,h}(\boldsymbol{v}_h)+d_{2,h}(\boldsymbol{u}_h,\boldsymbol{v}_h),
 \quad \forall \boldsymbol{v}_h \in \boldsymbol{U}_h,\label{Eqn:dweak_2}
 \end{eqnarray}
 where
 \begin{eqnarray}
 &&a_h({p}_h, {q}_h):=({p}_h, {q}_h)_{\mathcal{T}_h}, \label{Eqn:da}\\
 &&b_h(\boldsymbol{u}_h,{q}_h):=(\mathrm{curl}_h~ \boldsymbol{u}_h,{q}_h)_{\mathcal{T}_h}, \label{Eqn:db}\\
 &&c_h(\boldsymbol{u}_h, \boldsymbol{v}_h):=(\beta\boldsymbol{u}_h, \boldsymbol{v}_h)_{\mathcal{T}_h}, \label{Eqn:dc}\\
 &&d_h(\boldsymbol{v}_h,p_h):=(\mathrm{curl}_h\boldsymbol~\boldsymbol{v}_h,\alpha p_h)_{\mathcal{T}_h}, \label{Eqn:dd}\\
 &&\ell_{1,h}({q}_h):=0, \label{Eqn:dl_1}\\
 &&\ell_{2,h}(\boldsymbol{v}_h):=(\boldsymbol{f}, \boldsymbol{v}_h)_{\mathcal{T}_h},\label{Eqn:dl_2}
 \\ &&
 d_{1,h}\left(\boldsymbol{u}_{h}, {q}_{h}\right):=-<\{\{{q}_{h}\}\},[[\boldsymbol{u}_{h}]]>_{\mathcal{E}_{h}}, \\
 &&d_{2,h}\left(\boldsymbol{u}_{h}, \boldsymbol{v}_{h}\right):=<\{\{\alpha\mathrm{curl}_h~ \boldsymbol{u}_{h}\}\}-\kappa h^{-1}_e[[\boldsymbol{u}_{h}]],[[\boldsymbol{v}_{h}]]>_{\mathcal{E}_{h}},
 \end{eqnarray}
 with $\kappa>0$ is a penalty parameter and should be taken large enough.
 \begin{remark}\label{Rem:1}
 	\begin{itemize}
 		\item Comparing with the continuous variational problem \eqref{Eqn:weak_1}-\eqref{Eqn:weak_2} and the discrete variational problem \eqref{Eqn:dweak_1}-\eqref{Eqn:dweak_2}, the definitions of the bilinear terms, which without including $\mathrm{curl}_h$, are the same. In order to be consistent with other symbols, we add the subscript $h$ to the bilinear terms in the discrete variational form.
 		\item The calculation of $\mathrm{curl}_h$ in the bilinear terms of the discrete variational problem is piecewise derivation.
 		\item Compared with the continuous variational form, the discrete variational form adds two terms $d_{1,h}$ and $d_{2,h}$.
 	\end{itemize}
 \end{remark}
 
 In order to give the well-posedness of the discrete variational problems, we need to introduce the suitable IPDG form of the $\boldsymbol{H}(\mathrm{curl})-$elliptic problems: find $\boldsymbol{u}_h\in\boldsymbol{U}_h$, such that
 \begin{equation}\label{DVP:IPDG}
 a_{I P}\left(\boldsymbol{u}_{h}, \boldsymbol{v}_{h}\right)
 =\left(\boldsymbol{f}, \boldsymbol{v}_{h}\right)_{\mathcal{T}_{h}},
 \end{equation}
 where
 \begin{eqnarray}\nonumber
 a_{I P}\left(\boldsymbol{u}_{h}, \boldsymbol{v}_{h}\right)
 &=& \left(\beta\boldsymbol{u}_{h}, \boldsymbol{v}_{h}\right)_{\mathcal{T}_{h}}
 +\left(\alpha\mathrm{curl}~ \boldsymbol{u}_{h}, \mathrm{curl}~ \boldsymbol{v}_{h}\right)_{\mathcal{T}_{h}}
 -<\{\{\mathrm{curl}~ \boldsymbol{v}_{h}\}\} ,[[\alpha\boldsymbol{u}_{h}]]>_{\mathcal{E}_{h}}
 \\ \label{Def:Bilinear:aIP}
 && - <\{\{\alpha\mathrm{curl}~ \boldsymbol{u}_{h}\}\} ,[[\boldsymbol{v}_{h}]]>_{\mathcal{E}_{h}}
 +
 \kappa<h_e^{-1}[[\boldsymbol{u}_{h}]] ,[[\boldsymbol{v}_{h}]]>_{\mathcal{E}_{h}}.
 \end{eqnarray}
 
 \begin{remark}
 	Let $q_h=\alpha\mathrm{curl}~\boldsymbol{v}_h$ in \eqref{Eqn:dweak_1}-\eqref{Eqn:dweak_2}, and  subtract \eqref{Eqn:dweak_1} from \eqref{Eqn:dweak_2} then lead to  \eqref{DVP:IPDG}.
 \end{remark}
 
 To provide the existence and uniqueness of solutions to the variational problem \eqref{DVP:IPDG}, we need to introduce the following norm.
 \begin{eqnarray*}\label{Eqn:fs_h1}
 	|||\boldsymbol{v}_h|||_{h}^2=\|\mathrm{curl}~ \boldsymbol{v}_h\|^2_{\mathcal{T}_h}+\|\boldsymbol{v}_h\|^2_{\mathcal{T}_h}+\kappa\|h_e^{-\frac{1}{2}}[[ \boldsymbol{v}_h]] \|_{\mathcal{E}_h}^2,\quad \forall \boldsymbol{v}_h\in ( H^{1}\left(\mathcal{T}_{h}\right))^2, \kappa>0.
 \end{eqnarray*}
 
 Similar to \cite{BonitoNochetto10:734},
 by using the Cauchy-Schwarz inequality, trace inequality and inverse inequality,  it is easy to verify that $a_h(\cdot,\cdot)$ is bounded by $\||\cdot|\|_h$, i.e.,
 \begin{equation}\label{Eqn:bb}
 a_{h}\left(\boldsymbol{w}_{h}, \boldsymbol{v}_{h}\right)
 \leqslant C\||\boldsymbol{w}_{h}|\|_{h}\||\boldsymbol{v}_{h}|\|_{h}, \quad \forall \boldsymbol{w}_{h}, \boldsymbol{v}_{h} \in \mathbb{V}_{h}.
 \end{equation}
 Furthermore, for the coercivity of the bilinear forms $a_h(\cdot.\cdot)$ on $\mathbb{V}_h$, we have
 \begin{equation}\label{Eqn:cc}
 a_{h}\left(\boldsymbol{v}_{h}, \boldsymbol{v}_{h}\right) \geqslant C\||\boldsymbol{v}_{h}|\|_{h}^{2}, \quad \forall \boldsymbol{v}_{h} \in \mathbb{V}_{h}.
 \end{equation}
 Combining \eqref{Eqn:bb} and \eqref{Eqn:cc}, we obtain the well-posedness of the discrete variational problem \eqref{DVP:IPDG}.
 
 Furthermore, Lemma \ref{Lem:solveeq} shows that the variational problem \eqref{Eqn:dweak_1}-\eqref{Eqn:dweak_2} and the variational problem \eqref{DVP:IPDG} have equivalent form.
 The proof use similar arguments in \cite{CarstensenHoppe09:27}
 and is skipped here.

 \begin{lemma}\label{Lem:solveeq}
 	If $\left(\boldsymbol{u}_{h},{p}_{h} \right) \in\left(\boldsymbol{U}_{h}, \mathbb{Q}_{h}\right)$ is the solution of equation \eqref{Eqn:dweak_1}-\eqref{Eqn:dweak_2}, then $\boldsymbol{u}_{h} \in \boldsymbol{U}_{h}$ is the solution of the variational problem \eqref{DVP:IPDG}.
 	On the contrary,
 	if $\boldsymbol{u}_{{h}} \in \boldsymbol{U}_{h}$ is the solution of the variational problem \eqref{DVP:IPDG}, then there is a corresponding ${p}_{h} \in \mathbb{Q}_{h}$ makes $\left(\boldsymbol{u}_{{h}},{p}_{h}\right) \in\left(\boldsymbol{U}_{h}, \mathbb{Q}_{h}\right)$ is the solution of	\eqref{Eqn:dweak_1}-\eqref{Eqn:dweak_2}.
 \end{lemma}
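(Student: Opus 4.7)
The strategy is to exploit the Remark preceding the lemma: the test function $q_h := \alpha\,\mathrm{curl}\,\boldsymbol{v}_h$ is the bridge between the two formulations. A preliminary observation is that this choice is admissible: since $\boldsymbol{v}_h|_\tau = \boldsymbol{\alpha} + \beta(-x_2,x_1)$, a direct computation gives $\mathrm{curl}\,\boldsymbol{v}_h|_\tau = 2\beta \in \mathbb{R}$, and because $\alpha$ is piecewise constant on the (mesh-aligned) subdomains $\Omega_i$, we indeed have $\alpha\,\mathrm{curl}\,\boldsymbol{v}_h \in \mathbb{Q}_h$. I will also use the fact that, when the mesh resolves the material interfaces, $\alpha$ is single-valued on each edge, so that $\{\{\alpha\,\mathrm{curl}\,\boldsymbol{v}_h\}\}_e = \alpha\{\{\mathrm{curl}\,\boldsymbol{v}_h\}\}_e$ and $[[\alpha\boldsymbol{u}_h]]_e = \alpha[[\boldsymbol{u}_h]]_e$.

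For the forward direction, assume $(\boldsymbol{u}_h,p_h)$ solves \eqref{Eqn:dweak_1}--\eqref{Eqn:dweak_2}. Plugging $q_h = \alpha\,\mathrm{curl}\,\boldsymbol{v}_h$ into \eqref{Eqn:dweak_1} and using $\ell_{1,h}=0$ yields
\begin{equation*}
(p_h,\alpha\,\mathrm{curl}\,\boldsymbol{v}_h)_{\mathcal{T}_h} = (\alpha\,\mathrm{curl}_h\boldsymbol{u}_h,\mathrm{curl}\,\boldsymbol{v}_h)_{\mathcal{T}_h} - \langle \{\{\alpha\,\mathrm{curl}\,\boldsymbol{v}_h\}\},[[\boldsymbol{u}_h]]\rangle_{\mathcal{E}_h}.
\end{equation*}
Since $d_h(\boldsymbol{v}_h,p_h) = (p_h,\alpha\,\mathrm{curl}_h\boldsymbol{v}_h)_{\mathcal{T}_h}$, substituting the above identity into \eqref{Eqn:dweak_2} and expanding $d_{2,h}$ gives an equation that, after using $\{\{\alpha\,\mathrm{curl}\,\boldsymbol{v}_h\}\} = \alpha\{\{\mathrm{curl}\,\boldsymbol{v}_h\}\}$ and $[[\alpha\boldsymbol{u}_h]] = \alpha[[\boldsymbol{u}_h]]$ on each edge, matches \eqref{Def:Bilinear:aIP} term by term, i.e. $a_{IP}(\boldsymbol{u}_h,\boldsymbol{v}_h) = (\boldsymbol{f},\boldsymbol{v}_h)_{\mathcal{T}_h}$.

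For the reverse direction, given $\boldsymbol{u}_h\in\boldsymbol{U}_h$ solving \eqref{DVP:IPDG}, I define $p_h\in\mathbb{Q}_h$ as the unique element satisfying
\begin{equation*}
(p_h,q_h)_{\mathcal{T}_h} = (\mathrm{curl}_h\boldsymbol{u}_h,q_h)_{\mathcal{T}_h} - \langle \{\{q_h\}\},[[\boldsymbol{u}_h]]\rangle_{\mathcal{E}_h} \qquad \forall\,q_h\in\mathbb{Q}_h.
\end{equation*}
Existence and uniqueness are immediate because $a_h(\cdot,\cdot)=(\cdot,\cdot)_{\mathcal{T}_h}$ is an inner product on the finite-dimensional $\mathbb{Q}_h$, so Riesz representation applies. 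By construction, $(\boldsymbol{u}_h,p_h)$ satisfies \eqref{Eqn:dweak_1}. To verify \eqref{Eqn:dweak_2}, I again choose $q_h = \alpha\,\mathrm{curl}\,\boldsymbol{v}_h$ in the defining identity of $p_h$ to express $d_h(\boldsymbol{v}_h,p_h)$, add $c_h(\boldsymbol{u}_h,\boldsymbol{v}_h)$, and then invoke \eqref{DVP:IPDG} for this $\boldsymbol{v}_h$. Using the edge-wise single-valuedness of $\alpha$, the consistency term and the $d_{1,h}$-derived jump term cancel, leaving exactly $\ell_{2,h}(\boldsymbol{v}_h) + d_{2,h}(\boldsymbol{u}_h,\boldsymbol{v}_h)$.

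The main obstacle I anticipate is purely bookkeeping rather than conceptual: tracking the three pairs of jump/average terms (from $d_{1,h}$, $d_{2,h}$, and the consistency part of $a_{IP}$) and verifying that the pair involving $[[\boldsymbol{u}_h]]$ paired against $\{\{\mathrm{curl}\,\boldsymbol{v}_h\}\}$ cancels correctly. This cancellation hinges on the identities $\{\{\alpha\phi\}\}_e = \alpha\{\{\phi\}\}_e$ and $[[\alpha\boldsymbol{w}]]_e = \alpha[[\boldsymbol{w}]]_e$, which hold under the standing assumption that $\mathcal{T}_h$ resolves the partition $\{\Omega_i\}$. With that in hand, both implications follow symmetrically, and the proof reduces to algebraic verification of the type already sketched in \cite{CarstensenHoppe09:27}, justifying the author's choice to omit the detailed calculation.
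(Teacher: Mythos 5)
Your route coincides with the one the paper intends but omits: the forward direction via the Remark's substitution $q_h=\alpha\,\mathrm{curl}_h\boldsymbol{v}_h$ (admissible since $\mathrm{curl}\,\boldsymbol{v}_h$ is elementwise constant and $\alpha$ is elementwise constant on a mesh resolving the $\Omega_i$), and the converse by defining $p_h\in\mathbb{Q}_h$ through the Riesz representation of the right-hand side of \eqref{Eqn:dweak_1}, so that \eqref{Eqn:dweak_1} holds by construction and \eqref{Eqn:dweak_2} is checked with the same substitution. That skeleton is sound.

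There is, however, a concrete flaw at the step where you match the resulting identity with \eqref{Def:Bilinear:aIP} term by term. You justify $\{\{\alpha\,\mathrm{curl}\,\boldsymbol{v}_h\}\}_e=\alpha\{\{\mathrm{curl}\,\boldsymbol{v}_h\}\}_e$ and $[[\alpha\boldsymbol{u}_h]]_e=\alpha[[\boldsymbol{u}_h]]_e$ by asserting that mesh alignment with the partition $\{\Omega_i\}$ makes $\alpha$ single-valued on each edge. On interface edges $e\subset\partial\Omega_i\cap\partial\Omega_j$ this is exactly false: the two one-sided traces of $\alpha$ differ, and writing $a_i=\alpha|_{\tau_i}$, $c_i=\mathrm{curl}\,\boldsymbol{v}_h|_{\tau_i}$, $u_i=\boldsymbol{u}_h|_{\tau_i}\cdot\boldsymbol{t}_i$, one has $\{\{\alpha\,\mathrm{curl}\,\boldsymbol{v}_h\}\}_e[[\boldsymbol{u}_h]]_e=\tfrac12(a_1c_1+a_2c_2)(u_1+u_2)$ whereas $\{\{\mathrm{curl}\,\boldsymbol{v}_h\}\}_e[[\alpha\boldsymbol{u}_h]]_e=\tfrac12(c_1+c_2)(a_1u_1+a_2u_2)$, and these differ in general when $a_1\neq a_2$. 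What the substitution actually extracts from \eqref{Eqn:dweak_1}--\eqref{Eqn:dweak_2} is the symmetric form with consistency term $-\langle\{\{\alpha\,\mathrm{curl}_h\boldsymbol{v}_h\}\},[[\boldsymbol{u}_h]]\rangle_{\mathcal{E}_h}$, not the term $-\langle\{\{\mathrm{curl}\,\boldsymbol{v}_h\}\},[[\alpha\boldsymbol{u}_h]]\rangle_{\mathcal{E}_h}$ written in \eqref{Def:Bilinear:aIP}; in both directions of your argument the leftover is precisely $\langle\{\{\mathrm{curl}\,\boldsymbol{v}_h\}\},[[\alpha\boldsymbol{u}_h]]\rangle_{\mathcal{E}_h}-\langle\{\{\alpha\,\mathrm{curl}\,\boldsymbol{v}_h\}\},[[\boldsymbol{u}_h]]\rangle_{\mathcal{E}_h}$, which does not vanish on edges across which $\alpha$ jumps. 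So your proof closes only under the additional hypothesis that $\alpha$ is continuous across all interior edges (e.g.\ globally constant), or if the first consistency term of \eqref{Def:Bilinear:aIP} is read as $\langle\{\{\alpha\,\mathrm{curl}\,\boldsymbol{v}_h\}\},[[\boldsymbol{u}_h]]\rangle_{\mathcal{E}_h}$, in which case no commuting identities are needed at all; as written, the single-valuedness claim is a genuine gap, and you should either add that hypothesis explicitly or state the equivalence with the symmetric IPDG form actually produced by the mixed scheme.
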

 
 \subsection{A posteriori error indicator}
 For any $\tau\in \mathcal{T}_h$, $e\in \mathcal{E}_h$ and $\left(\boldsymbol{v}_{h}, {q}_{h}\right) \in \mathbb{Q}_{h} \times  \boldsymbol{U}_{h}$, we introduce the following element-wise residuals and edge-wise jump residuals as
 $$
 \begin{array}{l}
 R_{1}\left(\boldsymbol{v}_{h}, {q}_{h}\right)|_{\tau}
 :=
 {q}_{h}|_{\tau}-\mathrm{curl}_h\boldsymbol{v}_{h}|_{\tau}, \\
 R_{2}\left(\boldsymbol{v}_{h}, {q}_{h}\right)|_{\tau}
 :=
 \boldsymbol{f}|_{\tau}-\left(\boldsymbol{curl}_h~\alpha {q}_{h}+\beta\boldsymbol{v}_{h}\right)|_{\tau}, \\
 R_{3}\left(\boldsymbol{v}_{h}\right)|_{\tau}
 :=
 \nabla \cdot\left(\boldsymbol{f}-\beta\boldsymbol{v}_{h}\right)|_{\tau}, \\
 J_{1}\left({q}_{h}\right)|_{e}
 :=
 [[\alpha{q}_{h}]]_e, \\
 J_{2}\left(\boldsymbol{v}_{h}\right)|_{e}:=[\boldsymbol{f}-\beta\boldsymbol{v}_{h}]_e,\\
 J_{3}\left(\boldsymbol{v}_{h}\right)|_{e}:=[[\boldsymbol{v}_h]]_e.
 \end{array}
 $$
 The local error estimator on $\tau\in \mathcal{T}_h$ is defined as
 $$
 \begin{aligned}
 \eta^{2}\left(\boldsymbol{v}_{h}, {q}_{h} ; \tau\right)
 :=&
 \|R_{1}\left(\boldsymbol{v}_{h}, {q}_{h}\right)\|_{0,\tau}^{2}
 +
 h_{\tau}^{2}\left(\|R_{2}\left(\boldsymbol{v}_{h}, {q}_{h}\right)\|_{0,\tau}^{2}
 +
 \|R_{3}\left(\boldsymbol{v}_{h}\right)\|_{0,\tau}^{2}\right) \\
 &+
 \sum_{e \in \partial \tau} h_{e}\left(\|J_{1}\left({q}_{h}\right)\|_{0,e}^{2}
 +
 \|J_{2}\left(\boldsymbol{v}_{h}\right)\|_{0,e}^{2}\right)
 +
 \kappa\sum_{e\in\partial\tau}h^{-1}_e\left\|J_3(\boldsymbol{v}_h)\right\|^2_{0, e},
 \end{aligned}
 $$
 where $h_{\tau}$ denotes the diameter of the element $\tau$. The mesh $\mathcal{T}_h$ is shape-regular which implies that $h_{\tau} \approx h_{e}$.
 
 Then the global error estimator on $ \mathcal{T}_h$ is defined as
 \begin{equation}\label{eta}
 \eta^{2}\left(\boldsymbol{v}_{h}, {q}_{h} ; \mathcal{T}_{h}\right)=\sum_{\tau \in \mathcal{T}_{h}} \eta^{2}\left(\boldsymbol{v}_{h}, {q}_{h} ; \tau\right).
 \end{equation}
 
 \section{Reliability analysis}\label{sec:4}
 
 For any  $(\boldsymbol{v},q)\in\boldsymbol{U}\times\mathbb{Q}$ and $(\boldsymbol{v}_h,q_h)\in\boldsymbol{U}_h\times\mathbb{Q}_h$, we define the following error
 \begin{eqnarray}\nonumber
 \| (\boldsymbol{v},q)-(\boldsymbol{v}_h,q_h) \|^2_{DG}
 &:=&
 \|{q-q_h}\|_{0,\Omega}^{2}
 +
 \|\boldsymbol{v}-\boldsymbol{v}_{h}\|_{0,\Omega}^{2}
 +\|\mathrm{curl}_h~(\boldsymbol{v}-\boldsymbol{v}_{h})\|_{0,\Omega}^{2}
 \\ \label{Eqn:DG}
 &&+ \kappa\sum\limits_{e\in \mathcal{E}_{h}} h_{e}^{-1}\|[[\boldsymbol{v}_{h}]]\|_{0,e}^{2}.
 \end{eqnarray}
 \begin{remark}
 	Here we use $[[ \boldsymbol{v}_h ]]_{e}$ instead of $[[\boldsymbol{v}-\boldsymbol{v}_{h}]]_{e}$, since $[[\boldsymbol{v}]]_{e}=0$ for $\boldsymbol{v} \in$ $\boldsymbol{U}$.
 \end{remark}
 
 Next, we focus on proving the reliability of the error indicator defined in \eqref{eta}.
 The key of our argument is to use the space decomposition technique:
 decompose the DG finite element solution $\boldsymbol{u}_h$ into two parts: one is conforming part $\boldsymbol{u}_h^{conf}\in\boldsymbol{U}_h^{conf}:=\boldsymbol{U}_h\cap\boldsymbol{U}$ and the other is its $L^2$ orthogonal part $\boldsymbol{u}_h^{\bot}\in\boldsymbol{U}_h^{\bot}$.
 Therefore, we need to take care of continuous error $\|(\boldsymbol{u},p)-(\boldsymbol{u}_h^{conf},p_h)\|_{DG}$ instead of $\|(\boldsymbol{u},p)-(\boldsymbol{u}_h,p_h)\|_{DG}$.
 The main analysis tools for continuous error are Helmholtz decomposition and the two interpolations.
 We prove the reliability of the error indicator.
 
 The following lemmas provide some estimates related to the continuous error.
 \begin{lemma}\label{Lem:leqell_1ell_2}
 	Let $( \boldsymbol{u},{p}) \in  \boldsymbol{U}\times\mathbb{Q}$ be solution of system \eqref{Eqn:weak_1}-\eqref{Eqn:weak_2}, then for any  $(\boldsymbol{v}_h^{conf}, {p}_{h})\in \boldsymbol{U}^{conf}_h \times \mathbb{Q}_h$, we have
 	\begin{equation}\label{Eqn:errconf}
 	\|(\boldsymbol{u}-\boldsymbol{v}^{conf}_h, {p}-{p}_{h})\|_{\boldsymbol{U} \times \mathbb{Q}} \lesssim\|\tilde{\ell}_{1}\|_{\mathbb{Q}^{*}}+\|\tilde{\ell}_{2}\|_{\boldsymbol{U}^{*}},
 	\end{equation}
 	where
 	\begin{eqnarray}
 	&&\tilde{\ell}_{1}({q})=-a\left({p}_{h}, {q}\right)+b(\boldsymbol{v}^{conf}_h, {q}),  \forall q \in \mathbb{Q},\label{Eqn:ell_1} \\
 	&&\tilde{\ell}_{2}(\boldsymbol{v})=\ell_{2}(\boldsymbol{v})-d\left(\boldsymbol{v}, {p}_{h}\right)-c(\boldsymbol{v}^{conf}_h, \boldsymbol{v}), \forall \boldsymbol{v} \in \boldsymbol{U}. \label{Eqn:ell_2}
 	\end{eqnarray}
 \end{lemma}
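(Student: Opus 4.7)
The plan is to show that the error pair $(\boldsymbol{u}-\boldsymbol{v}_h^{conf},\, p-p_h)$ solves exactly the same type of mixed variational problem as \eqref{Eqn:weak_1}-\eqref{Eqn:weak_2}, only with the right-hand side functionals $\ell_1, \ell_2$ replaced by the perturbed functionals $\tilde{\ell}_1, \tilde{\ell}_2$ defined in \eqref{Eqn:ell_1}-\eqref{Eqn:ell_2}. Once this is established, the stability estimate \eqref{Eqn:up} of Lemma \ref{WVP} applies verbatim to give the claimed bound.

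First I would record the conformity observation: since $\boldsymbol{v}_h^{conf}\in\boldsymbol{U}_h^{conf}=\boldsymbol{U}_h\cap\boldsymbol{U}$, the difference $\boldsymbol{u}-\boldsymbol{v}_h^{conf}$ genuinely belongs to $\boldsymbol{U}$, and trivially $p-p_h\in\mathbb{Q}$, so the pair lies in the correct space for the operator $\mathcal{A}$ to act on. Then I would exploit linearity of the bilinear forms $a, b, c, d$ to compute, for arbitrary $q\in\mathbb{Q}$,
\begin{equation*}
a(p-p_h,q)-b(\boldsymbol{u}-\boldsymbol{v}_h^{conf},q)=\bigl(a(p,q)-b(\boldsymbol{u},q)\bigr)-a(p_h,q)+b(\boldsymbol{v}_h^{conf},q),
\end{equation*}
and use \eqref{Eqn:weak_1} together with $\ell_1\equiv 0$ to identify the right-hand side as $\tilde{\ell}_1(q)$. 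The same linearity argument applied to \eqref{Eqn:weak_2} yields
\begin{equation*}
d(\boldsymbol{v},p-p_h)+c(\boldsymbol{u}-\boldsymbol{v}_h^{conf},\boldsymbol{v})=\ell_2(\boldsymbol{v})-d(\boldsymbol{v},p_h)-c(\boldsymbol{v}_h^{conf},\boldsymbol{v})=\tilde{\ell}_2(\boldsymbol{v})
\end{equation*}
for every $\boldsymbol{v}\in\boldsymbol{U}$. This shows that $(\boldsymbol{u}-\boldsymbol{v}_h^{conf},\,p-p_h)$ is precisely the unique solution of the continuous mixed system driven by the data $(\tilde{\ell}_1,\tilde{\ell}_2)$.

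Next I would verify that $\tilde{\ell}_1\in\mathbb{Q}^*$ and $\tilde{\ell}_2\in\boldsymbol{U}^*$. Continuity of $a(\cdot,\cdot)$ and $b(\cdot,\cdot)$ on $\mathbb{Q}\times\mathbb{Q}$ and $\boldsymbol{U}\times\mathbb{Q}$ respectively makes $\tilde{\ell}_1$ a bounded linear form on $\mathbb{Q}$; likewise continuity of $c$, $d$ and the assumption $\boldsymbol{f}\in\boldsymbol{H}(\mathrm{div},\Omega)\subset\boldsymbol{L}^2(\Omega)$ handle $\tilde{\ell}_2$. With the hypotheses of Lemma \ref{WVP} thus satisfied, I would invoke \eqref{Eqn:up} directly to conclude
\begin{equation*}
\|(\boldsymbol{u}-\boldsymbol{v}_h^{conf},\,p-p_h)\|_{\boldsymbol{U}\times\mathbb{Q}}\lesssim \|\tilde{\ell}_1\|_{\mathbb{Q}^*}+\|\tilde{\ell}_2\|_{\boldsymbol{U}^*}.
\end{equation*}

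Since the real work has already been carried out inside Lemma \ref{WVP} (continuity, inf-sup, bijectivity of $\mathcal{A}$), I do not anticipate a genuine obstacle here; the only point requiring a little care is the conformity of $\boldsymbol{v}_h^{conf}$, because without $\boldsymbol{v}_h^{conf}\in\boldsymbol{U}$ the tangential-trace condition would fail and the error pair could not be tested against the continuous variational framework at all. This is exactly the reason the result is stated for the conforming part $\boldsymbol{u}_h^{conf}$ rather than for $\boldsymbol{u}_h$ itself, and it explains why the reliability argument outlined in Section \ref{sec:4} first splits $\boldsymbol{u}_h$ via the $L^2$-orthogonal decomposition before applying this lemma.
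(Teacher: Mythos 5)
Your proposal is correct and follows essentially the same route as the paper: the paper likewise uses linearity (phrased through the operator $\mathcal{A}$ rather than equation-by-equation) to show that the error pair $(\boldsymbol{u}-\boldsymbol{v}_h^{conf},\,p-p_h)\in\boldsymbol{U}\times\mathbb{Q}$ satisfies the continuous system with data $(\tilde{\ell}_1,\tilde{\ell}_2)$, and then concludes via the well-posedness estimate of Lemma \ref{WVP}. Your explicit remarks on the conformity of $\boldsymbol{v}_h^{conf}$ and the boundedness of $\tilde{\ell}_1,\tilde{\ell}_2$ are details the paper leaves implicit, but they do not change the argument.
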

 
 \begin{proof}
 	For the operator $\mathcal{A}$ given by \eqref{Eqn:Al}, it is easy to obtain the linearity, namely, for any $q_1,~q_2,~q\in\mathbb{Q}$ and  $\boldsymbol{v}_1,~\boldsymbol{v}_2,~\boldsymbol{v}\in\boldsymbol{U}$, we have
 	\begin{eqnarray*}
 		\left(\mathcal{A} \left({q}_{1}+{q}_{2},\boldsymbol{v}_{1}+\boldsymbol{v}_{2} \right)\right)({q},\boldsymbol{v})
 		=\left(\mathcal{A}\left({q}_{1},\boldsymbol{v}_{1} \right)\right)({q},\boldsymbol{v} )+\left(\mathcal{A}\left({q}_{2},\boldsymbol{v}_{2} \right)\right)({q},\boldsymbol{v}).
 	\end{eqnarray*}
 	Hence, we have
 	
 	\begin{eqnarray*}
 		\lefteqn{(\mathcal{A}({p}-{p}_{h},\boldsymbol{u}-\boldsymbol{v}^{conf}_h ))({q},\boldsymbol{v})} \\
 		&=&
 		(\mathcal{A}({p},\boldsymbol{u} ))({q},\boldsymbol{v} )
 		-
 		(\mathcal{A}({p}_{h},\boldsymbol{v}^{conf}_h ))({q},\boldsymbol{v}) \\
 		&=&
 		\ell_{2}(\boldsymbol{v})-(a({p}_{h}, {q})-b(\boldsymbol{v}^{conf}_h, {q})
 		+
 		d(\boldsymbol{v}, {p}_{h})
 		+
 		c(\boldsymbol{v}^{conf}_h,\boldsymbol{v})) \\
 		&:=&\tilde{\ell}_{1}({q})
 		+
 		\tilde{\ell}_{2}(\boldsymbol{v}).
 	\end{eqnarray*}
 	At last,
 	noting that $(\boldsymbol{u}-\boldsymbol{v}^{conf}_h,{p}-{p}_{h} ) \in \boldsymbol{U}\times\mathbb{Q}$ and using the definition of the operator norm, this completes the proof.
 \end{proof}
 
 In the following lemmas, our purpose is to obtian upper bounds for $\|\tilde{\ell}_{1}\|_{Q^{*}}$ and $\|\tilde{\ell}_{2}\|_{U^{*}}$ in Lemmas \ref{Lem:tildeell_1} and \ref{Eqn:tildeell_2}, respectively.
 \begin{lemma}\label{Lem:tildeell_1}
 	Let $\left(\boldsymbol{u}_{h},{p}_{h} \right) \in \boldsymbol{U}_{h}\times\mathbb{Q}_{h}$ be solution of  \eqref{Eqn:dweak_1}-\eqref{Eqn:dweak_2}. For any $\boldsymbol{v}_h^{conf}\in\boldsymbol{U}^{conf}_h$, we have
 	$$
 	\|\tilde{\ell}_{1}\|_{\mathbb{Q}^{*}}
 	\lesssim
 	\left(\sum_{\tau \in \mathcal{T}_h}\|R_{1}\left(\boldsymbol{u}_{h}, {p}_{h}\right)\|_{0,\tau}^{2}\right)^{1 / 2}
 	+
 	\left(\sum_{\tau \in \mathcal{T}_h}\|\mathrm{curl}_h(\boldsymbol{v}_h^{conf}-\boldsymbol{u}_{h})\|_{0,\tau}^{2}\right)^{1 / 2}.
 	$$
 \end{lemma}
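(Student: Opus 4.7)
The plan is to unwind the definition of $\tilde{\ell}_1$ and reduce it to an $L^2$-inner product that splits cleanly into the two terms on the right-hand side. Since $\boldsymbol{v}_h^{conf}\in \boldsymbol{U}_h^{conf}\subset\boldsymbol{U}$, the global $\mathrm{curl}$ equals $\mathrm{curl}_h$ on it, so definitions \eqref{Eqn:a} and \eqref{Eqn:b} give
\[
\tilde{\ell}_1(q) \;=\; -a(p_h,q)+b(\boldsymbol{v}_h^{conf},q) \;=\; (\mathrm{curl}_h\boldsymbol{v}_h^{conf}-p_h,\, q)_{\mathcal{T}_h}
\]
for every $q\in\mathbb{Q}$.

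Next I would insert $\pm\mathrm{curl}_h\boldsymbol{u}_h$ and recognize the residual $R_1$: using $R_1(\boldsymbol{u}_h,p_h)|_\tau = p_h|_\tau-\mathrm{curl}_h\boldsymbol{u}_h|_\tau$, one rewrites
\[
\tilde{\ell}_1(q) \;=\; \bigl(\mathrm{curl}_h(\boldsymbol{v}_h^{conf}-\boldsymbol{u}_h),\,q\bigr)_{\mathcal{T}_h} \;-\; \bigl(R_1(\boldsymbol{u}_h,p_h),\,q\bigr)_{\mathcal{T}_h}.
\]
Applying the Cauchy--Schwarz inequality elementwise and then summing over $\tau\in\mathcal{T}_h$ yields
\[
|\tilde{\ell}_1(q)| \;\leq\; \Bigl(\sum_{\tau\in\mathcal{T}_h}\|\mathrm{curl}_h(\boldsymbol{v}_h^{conf}-\boldsymbol{u}_h)\|_{0,\tau}^2\Bigr)^{1/2}\|q\|_{0,\Omega} \;+\; \Bigl(\sum_{\tau\in\mathcal{T}_h}\|R_1(\boldsymbol{u}_h,p_h)\|_{0,\tau}^2\Bigr)^{1/2}\|q\|_{0,\Omega}.
\]

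Finally, since $\|q\|_{\mathbb{Q}}=\|q\|_{0,\Omega}$, dividing by $\|q\|_{\mathbb{Q}}$ and taking the supremum over $q\in\mathbb{Q}\setminus\{0\}$ produces the advertised bound on $\|\tilde{\ell}_1\|_{\mathbb{Q}^*}$. There is no real obstacle here: the statement is essentially an algebraic identity followed by Cauchy--Schwarz, and no interpolation operator or Helmholtz decomposition is needed at this stage. The only point to be careful about is to insist that $\boldsymbol{v}_h^{conf}$ lies in the conforming subspace so that $\mathrm{curl}\boldsymbol{v}_h^{conf}=\mathrm{curl}_h\boldsymbol{v}_h^{conf}\in L^2(\Omega)$, which legitimizes writing $b(\boldsymbol{v}_h^{conf},q)$ as a broken $L^2$-inner product without boundary contributions from edge jumps.
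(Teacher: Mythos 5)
Your proof is correct and follows essentially the same route as the paper: unwind the definition of $\tilde{\ell}_1$, insert $\pm\mathrm{curl}_h\boldsymbol{u}_h$ to expose $R_1(\boldsymbol{u}_h,p_h)$ and $\mathrm{curl}_h(\boldsymbol{v}_h^{conf}-\boldsymbol{u}_h)$, then apply Cauchy--Schwarz and take the supremum over $q\in\mathbb{Q}$. No discrepancies worth noting.
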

 \begin{proof}
 	For any $q\in \mathbb{Q}$, by using  \eqref{Eqn:ell_1}, \eqref{Eqn:a} and \eqref{Eqn:b}, we have
 	\begin{eqnarray*}
 		\tilde{\ell}_{1}(q)
 		&=&
 		-a\left({p}_{h}, {q}\right)+b(\boldsymbol{v}^{conf}_h, {q})
 		\\
 		&=&
 		-\left(p_{h}, q\right)+(\mathrm{curl}~\boldsymbol{v}_h^{conf}, q)
 		\\
 		&=& \left({\mathrm{curl}_h~ \boldsymbol{u}}_{h}-p_{h}, q\right)_{\mathcal{T}_h}
 		+
 		(\mathrm{curl}_h(\boldsymbol{v}_h^{conf}-\boldsymbol{u}_{h}), q)_{\mathcal{T}_h}.
 	\end{eqnarray*}
 	Applying H\"older inequality and  Cauchy-Schwarz inequality leads to
 	\begin{eqnarray*}
 		\lefteqn{	
 			|\tilde{\ell}_{1}(q)|
 			\leq \sum_{\tau \in \mathcal{T}_h}\left\|\mathrm{curl}_h~ \boldsymbol{u}_{h}-{p}_{h}\right\|_{0,\tau}\|{q}\|_{0,\tau}
 			+
 			\sum_{\tau \in \mathcal{T}_h}\|\mathrm{curl}_h(\boldsymbol{v}^{conf}_{h}-\boldsymbol{u}_{h})\|_{0,\tau}\|{q}\|_{0,\tau}
 		}
 		\\
 		&&\leq 2 \left(\left(\sum_{\tau \in \mathcal{T}_h}\|R_{1}\left( \boldsymbol{u}_{h},p_{h}\right)\|_{0,\tau}^{2}\right)^{1 / 2}
 		+
 		\left(\sum_{\tau \in \mathcal{T}_h}\|\mathrm{curl}_h(\boldsymbol{u}_{h}-\boldsymbol{v}_h^{conf})\|_{0,\tau}^{2}\right)^{1 / 2}\right)\|q\|_{0,\Omega}.
 	\end{eqnarray*}
 	The proof is completed.
 \end{proof}
 
 In order to estimate the term $\|\tilde{\ell}_{2}\|_{\boldsymbol{U}^{*}}$ in Lemma \ref{Eqn:tildeell_2},
 we shall use the following two interpolation operators with the corresponding approximations.
 \begin{itemize}
 	\item[(1)] Scott-Zhang quasi-interpolation $S_h:H_0^1(\Omega)\rightarrow \{v\in C(\Omega)|\ v|_\tau \in{P}_1(\tau),~v|_{\partial\Omega}=0,~\forall \tau\in\mathcal{T}_h\}$, where ${P}_1(\tau)$ represents a linear polynomial space.
 	The definition and approximation properties of Scott-Zhang quasi-interpolation interpolation were first proposed in
 	\cite{ScottZhang90:483}.
 	For
 	$\psi\in H^1_0(\Omega)$, there hold
 	\begin{eqnarray}
 	\|\nabla S_h\psi\|_{0,\tau}
 	\lesssim
 	\|\nabla\psi\|_{0,\omega_\tau}, && \forall \tau\in\mathcal{T}_h,  \label{Ppsi} \\
 	\|\psi-S_h\psi\|_{0,\tau}
 	\lesssim
 	h_\tau \|\nabla \psi\|_{0,\omega_\tau}, && \forall \tau\in\mathcal{T}_h,  \label{psitau}\\
 	\|\psi-S_h\psi\|_{0,e}
 	\lesssim
 	h_e^{\frac{1}{2}} \|\nabla \psi\|_{0,\omega_e},&& \forall e\in\mathcal{E}_h,  \label{psie}
 	\end{eqnarray}
 	where $\omega_{\tau}:=\bigcup\limits_{\tau^{\prime} \cap \tau \neq \emptyset} \tau^{\prime}$ and  $\omega_{e}:=\bigcup\limits_{\tau \cap e \neq \emptyset} \tau$.
 	\item[(2)] Vector-Valued operator $
 	\boldsymbol{P}_{h}: \boldsymbol{H}^{1}(\Omega) \cap \boldsymbol{U} \rightarrow  \boldsymbol{U}_h^{conf}
 	$. The definition and approximation properties of Vector-Valued operator were proposed in
 	\cite{CarstensenHoppe05:19}.
 	For $q\in \boldsymbol{H}^1(\Omega)\cap\boldsymbol{U}$, there hold
 	\begin{eqnarray}
 	\|\boldsymbol{P}_h\boldsymbol{q}\|_{0,\tau} \lesssim \|\boldsymbol{q}\|_{1,\tilde{\omega}_\tau}, && \forall \tau\in\mathcal{T}_h,  \label{Pq}\\
 	\|\boldsymbol{q}-\boldsymbol{P}_h\boldsymbol{q}\|_{0,\tau} \lesssim h_\tau \|\boldsymbol{q}\|_{1,\tilde{\omega}_\tau}, && \forall \tau\in\mathcal{T}_h,  \label{qtau} \\
 	\|\boldsymbol{q}-\boldsymbol{P}_h\boldsymbol{q}\|_{0,e} \lesssim h_e^{\frac{1}{2}} \|\boldsymbol{q}\|_{1,\tilde{\omega}_e}, && \forall e\in\mathcal{E}_h, \label{qe}
 	\end{eqnarray}
 	where $\tilde{\omega}_{e}:=\bigcup\left\{\tau \in \mathcal{T}_{h}(\Omega) \mid e \in \mathcal{E}_{h}(T)\right\}$ and
 	$\tilde{\omega}_{\tau}:=\bigcup\left\{\omega_{e} \mid e \in \mathcal{E}_{h}(T)\right\}$.
 \end{itemize}

 \begin{lemma}\label{Eqn:tildeell_2}
 	Let $\left(\boldsymbol{u}_{h},{p}_{h}\right) \in \boldsymbol{U}_{h}\times\mathbb{Q}_{h} $  be solution of system \eqref{Eqn:dweak_1}-\eqref{Eqn:dweak_2}. For any $\boldsymbol{v}^{conf}_h\in\boldsymbol{U}^{conf}_h$, then there exists a constant $C_{1}>0$ depending only on $\|\beta\|_{0,\infty}$, we have
 	\begin{eqnarray*}
 		\|\tilde{\ell}_{2}\|_{\boldsymbol{U}^{*}}
 		&\leq&
 		C_{1} \left(\sum_{\tau \in \mathcal{T}_h} h_{\tau}^{2}\left(\|R_{2}\left(\boldsymbol{u}_{h}, {p}_{h}\right)\|_{0,\tau}^{2}
 		+
 		\|R_{3}\left(\boldsymbol{u}_{h}\right)\|_{0,\tau}^{2}\right)\right.\\
 		&&\left.+\sum_{e \in \mathcal{E}_h} h_{e}\left(\|J_{1}\left({p}_{h}\right)\|_{0,e}^{2}
 		+
 		\|J_{2}\left(\boldsymbol{u}_{h}\right)\|_{0,e}^{2}\right)
 		+
 		\sum_{\tau \in \mathcal{T}_h}\|\boldsymbol{u}_{h}-\boldsymbol{v}_h^{conf}\|_{0,\tau}^{2}\right)^{1 / 2}.
 	\end{eqnarray*}
 \end{lemma}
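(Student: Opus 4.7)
The plan is to bound $\tilde{\ell}_{2}(\boldsymbol{v})$ for an arbitrary test field $\boldsymbol{v}\in\boldsymbol{U}$ via a regular (Helmholtz-type) decomposition
\[
\boldsymbol{v} = \nabla\psi + \boldsymbol{z},\qquad \psi\in H_{0}^{1}(\Omega),\qquad \boldsymbol{z}\in\boldsymbol{H}^{1}(\Omega)\cap\boldsymbol{U},
\]
with $\|\nabla\psi\|_{0}+\|\boldsymbol{z}\|_{1}\lesssim\|\boldsymbol{v}\|_{\boldsymbol{U}}$. Split $\tilde{\ell}_{2}(\boldsymbol{v})=\tilde{\ell}_{2}(\nabla\psi)+\tilde{\ell}_{2}(\boldsymbol{z})$ and, in each piece, first swap $\beta\boldsymbol{v}_{h}^{conf}$ for $\beta\boldsymbol{u}_{h}$; the resulting defect $(\beta(\boldsymbol{u}_{h}-\boldsymbol{v}_{h}^{conf}),\cdot)$ is controlled trivially by $\|\beta\|_{0,\infty}\|\boldsymbol{u}_{h}-\boldsymbol{v}_{h}^{conf}\|_{0}\|\boldsymbol{v}\|_{\boldsymbol{U}}$, feeding the final $\|\boldsymbol{u}_{h}-\boldsymbol{v}_{h}^{conf}\|_{0}$ term in the estimate.

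For the gradient piece, $\mathrm{curl}~\nabla\psi = 0$ kills the $d$-term, leaving $(\boldsymbol{f}-\beta\boldsymbol{u}_{h},\nabla\psi)$. The field $\nabla S_{h}\psi$ is piecewise constant, globally continuous, and tangentially zero on $\partial\Omega$, hence belongs to $\boldsymbol{U}_{h}$; moreover $\mathrm{curl}_{h}\nabla S_{h}\psi=0$ and $[[\nabla S_{h}\psi]]=0$, so testing \eqref{Eqn:dweak_2} with $\boldsymbol{v}_{h}=\nabla S_{h}\psi$ collapses to the discrete orthogonality $(\boldsymbol{f}-\beta\boldsymbol{u}_{h},\nabla S_{h}\psi)_{\mathcal{T}_{h}}=0$. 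Subtracting leaves $(\boldsymbol{f}-\beta\boldsymbol{u}_{h},\nabla(\psi-S_{h}\psi))_{\mathcal{T}_{h}}$; element-wise integration by parts, together with $\psi-S_{h}\psi=0$ on $\partial\Omega$, rewrites this as
\[
-\sum_{\tau\in\mathcal{T}_{h}}(R_{3}(\boldsymbol{u}_{h}),\psi-S_{h}\psi)_{\tau}+\sum_{e\in\mathcal{E}_{h}^{0}}\langle J_{2}(\boldsymbol{u}_{h}),\psi-S_{h}\psi\rangle_{e}.
\]
Cauchy-Schwarz combined with the Scott-Zhang bounds \eqref{psitau}-\eqref{psie} yields the desired $R_{3}$ and $J_{2}$ contributions, scaled by $\|\nabla\psi\|_{0}$.

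For the solenoidal piece I use $\boldsymbol{P}_{h}\boldsymbol{z}\in\boldsymbol{U}_{h}^{conf}\subset\boldsymbol{U}_{h}$ as test function in \eqref{Eqn:dweak_2}; conformity gives $[[\boldsymbol{P}_{h}\boldsymbol{z}]]=0$, so the stabilization $d_{2,h}(\boldsymbol{u}_{h},\boldsymbol{P}_{h}\boldsymbol{z})$ vanishes and the discrete identity reduces to $(\boldsymbol{f}-\beta\boldsymbol{u}_{h},\boldsymbol{P}_{h}\boldsymbol{z})_{\mathcal{T}_{h}}=(\alpha p_{h},\mathrm{curl}~\boldsymbol{P}_{h}\boldsymbol{z})_{\mathcal{T}_{h}}$. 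Subtracting this from $(\boldsymbol{f}-\beta\boldsymbol{u}_{h},\boldsymbol{z})-(\alpha p_{h},\mathrm{curl}~\boldsymbol{z})$ leaves $(\boldsymbol{f}-\beta\boldsymbol{u}_{h},\boldsymbol{z}-\boldsymbol{P}_{h}\boldsymbol{z})_{\mathcal{T}_{h}}-(\alpha p_{h},\mathrm{curl}(\boldsymbol{z}-\boldsymbol{P}_{h}\boldsymbol{z}))_{\mathcal{T}_{h}}$. Element-wise integration by parts of the curl term, using that $\boldsymbol{z}-\boldsymbol{P}_{h}\boldsymbol{z}$ has continuous tangential trace vanishing on $\partial\Omega$, converts the expression into $(R_{2}(\boldsymbol{u}_{h},p_{h}),\boldsymbol{z}-\boldsymbol{P}_{h}\boldsymbol{z})_{\mathcal{T}_{h}}$ plus $\sum_{e\in\mathcal{E}_{h}^{0}}\langle J_{1}(p_{h}),\boldsymbol{z}-\boldsymbol{P}_{h}\boldsymbol{z}\rangle_{e}$; Cauchy-Schwarz combined with \eqref{qtau}-\eqref{qe} delivers the $R_{2}$ and $J_{1}$ contributions, scaled by $\|\boldsymbol{z}\|_{1}$.

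Collecting the three estimates, dividing by $\|\boldsymbol{v}\|_{\boldsymbol{U}}$, and taking the supremum produces the stated dual-norm bound, with $C_{1}$ absorbing $\|\beta\|_{0,\infty}$ and the stability constants of the Helmholtz decomposition and of the two quasi-interpolation operators. The step I expect to be the main obstacle is the careful sign and orientation bookkeeping in the element-wise integration by parts for the 2D $\mathrm{curl}$, so that the element-boundary contributions genuinely assemble into the tangential jumps $J_{1}(p_{h})=[[\alpha p_{h}]]_{e}$ rather than a cancellation or wrong-sign combination; a secondary subtlety is verifying that $\nabla S_{h}\psi$ truly sits in $\boldsymbol{U}_{h}$, which relies on interpreting the condition $\boldsymbol{v}_{h}|_{e}=0$ in the definition of $\boldsymbol{U}_{h}$ as the tangential boundary condition $\boldsymbol{v}_{h}\cdot\boldsymbol{t}=0$ on $\partial\Omega$ inherited from $S_{h}\psi|_{\partial\Omega}=0$.
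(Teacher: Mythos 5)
Your proposal is correct and follows essentially the same route as the paper: Helmholtz decomposition $\boldsymbol{v}=\nabla\psi+\boldsymbol{v}^{\bot}$, Galerkin orthogonality from \eqref{Eqn:dweak_2} tested with the conforming interpolants $\nabla S_h\psi$ and $\boldsymbol{P}_h\boldsymbol{v}^{\bot}$ (whose tangential jumps vanish), elementwise integration by parts to produce $R_2$, $R_3$, $J_1$, $J_2$, and the estimates \eqref{psitau}--\eqref{qe} together with the $\boldsymbol{H}^1$-embedding of the solenoidal part. The only difference is bookkeeping — you swap $\boldsymbol{v}_h^{conf}$ for $\boldsymbol{u}_h$ up front and treat the two decomposition pieces separately, while the paper groups the two interpolants into one discrete test function and then estimates the combined interpolation error — and the subtlety you flag about $\nabla S_h\psi\in\boldsymbol{U}_h$ is present in the paper's own argument as well.
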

 
 \begin{proof}
 	For any $\boldsymbol{v}\in\boldsymbol{U}$, we use Helmholtz decomposition $\boldsymbol{v}$ as follows (see the Theorem 2.1 of 	\cite{CarstensenHoppe05:19})
 	\begin{equation}\label{wdec}
 	\boldsymbol{v}:= \boldsymbol{v}^0 + \boldsymbol{v}^{\bot},
 	\end{equation}
 	and
 	\begin{equation}
 	\|\boldsymbol{v}^{0}\|^2_{0,\Omega} + \|\boldsymbol{v}^{\bot}\|^2_{0,\Omega} = \|\boldsymbol{v}\|^2_{0,\Omega},
 	\end{equation}
 	where $\boldsymbol{v}^0 \in \boldsymbol{H}_0(\mathrm{curl}0,\Omega): =\{\boldsymbol{v}:\boldsymbol{v}\in \boldsymbol{H}_0(\mathrm{curl}, \Omega), \mathrm{curl}\ \boldsymbol{v}=0 \}$ and $\boldsymbol{v}^{\bot}\in \boldsymbol{H}^{\bot}(\mathrm{curl},\Omega): =\{\boldsymbol{v}:\boldsymbol{v}\in \boldsymbol{H}_0(\mathrm{curl}, \Omega), (\boldsymbol{v},\boldsymbol{v}^0)=0,\ \boldsymbol{v}^0\in \boldsymbol{H}_0(\mathrm{curl}0,\Omega)\}$. We make use of the representation $\boldsymbol{H}_0(\mathrm{curl}0,\Omega) = \boldsymbol{grad}\ {H}^1_0(\Omega)$, hence, we have $\boldsymbol{v}^0=\nabla \psi$ for some $\psi\in H_0^1(\Omega)$.
 	Applying the definition of $\tilde{\ell}_2$ \eqref{Eqn:ell_2}, we have
 	\begin{eqnarray}\nonumber
 	\tilde{\ell}_2(\boldsymbol{v})
 	&=&
 	\tilde{\ell}_2(\boldsymbol{v}^0+\boldsymbol{v}^{\bot})
 	=
 	\tilde{\ell}_2(\nabla\psi+\boldsymbol{v}^{\bot})
 	\\ \label{Eqn:ell_dec}
 	&=&
 	\tilde{\ell}_2(\nabla S_h\psi+\boldsymbol{P}_h\boldsymbol{v}^{\bot}) +
 	\tilde{\ell}_2(\nabla\psi+\boldsymbol{v}^{\bot}-\nabla S_h\psi-\boldsymbol{P}_h\boldsymbol{v}^{\bot}).
 	\end{eqnarray}
 	
 	We first analyze the first item on the right hand side of  \eqref{Eqn:ell_dec}. According to the definition of $\tilde{\ell}_2$ \eqref{Eqn:ell_2}, we have
 	\begin{eqnarray}\label{Eqn:S}
 	\lefteqn{\tilde{\ell}_{2}\left(\nabla S_h\psi+\boldsymbol{P}_h\boldsymbol{v}^{\bot}\right) } \nonumber\\
 	&=&
 	\ell_{2}\left(\nabla S_h\psi+\boldsymbol{P}_h\boldsymbol{v}^{\bot}\right)
 	-
 	d\left(\nabla S_h\psi+\boldsymbol{P}_h\boldsymbol{v}^{\bot}, {p}_{h}\right)
 	-
 	c(\boldsymbol{v}^{conf}_h, \nabla S_h\psi+\boldsymbol{P}_h\boldsymbol{v}^{\bot})
 	\nonumber\\
 	&=&
 	\ell_{2}\left(\nabla S_h\psi+\boldsymbol{P}_h\boldsymbol{v}^{\bot}\right)
 	-
 	d\left(\nabla S_h\psi+\boldsymbol{P}_h\boldsymbol{v}^{\bot}, {p}_{h}\right)
 	-
 	c\left(\boldsymbol{u}_h,\nabla S_h\psi+\boldsymbol{P}_h\boldsymbol{v}^{\bot} \right) \nonumber \\
 	&&
 	+
 	c(\boldsymbol{u}_{h}-\boldsymbol{v}^{conf}_h, \nabla S_h\psi+\boldsymbol{P}_h\boldsymbol{v}^{\bot}).
 	\end{eqnarray}
 	By using Remark \ref{Rem:1}, for any $e\in\mathcal{E}_h$,
 	noting that $[[\nabla S_h\psi+\boldsymbol{P}_h\boldsymbol{v}^{\bot}]]_e=0$ since $\nabla S_h\psi+\boldsymbol{P}_h\boldsymbol{v}^{\bot}\subset \boldsymbol{U}$, and choosing $\boldsymbol{v}_h=\boldsymbol{P}_h\boldsymbol{v}^{\bot}+\nabla S_h\psi$ in \eqref{Eqn:dweak_2}, we have
 	\begin{equation}\label{Eqn:0}
 	\ell_{2}\left(\boldsymbol{P}_h\boldsymbol{v}^{\bot}+\nabla S_h\psi\right)-d\left(\boldsymbol{P}_h\boldsymbol{v}^{\bot}+\nabla S_h\psi, {p}_{h}\right)-c\left(\boldsymbol{u}_{h},  \boldsymbol{P}_h\boldsymbol{v}^{\bot}+\nabla S_h\psi\right)=0.
 	\end{equation}
 	Combining \eqref{Eqn:S} and \eqref{Eqn:0}, we have
 	\begin{eqnarray*}
 		\tilde{\ell}_{2}\left(\nabla S_h\psi+\boldsymbol{P}_h\boldsymbol{v}^{\bot}\right)
 		&=& c(\boldsymbol{u}_{h}-\boldsymbol{v}^{conf}_h, \nabla S_h\psi + \boldsymbol{P}_h\boldsymbol{v}^{\bot}).
 	\end{eqnarray*}
 	Applying Cauchy-Schwarz inequality, \eqref{Ppsi}, \eqref{Pq} and $\boldsymbol{H}_0^{\bot}(\mathrm{curl},\Omega)$ is continuously imbedded in $\boldsymbol{H}^1(\Omega)\cap\boldsymbol{H}_0(\mathrm{curl},\Omega)$(see \cite{CarstensenHoppe05:19}),
 	we deduce
 	\begin{eqnarray}\label{Eqn:ell_2Pi}
 	\tilde{\ell}_{2}\left(\nabla S_h\psi+\boldsymbol{P}_h\boldsymbol{v}^{\bot}\right)
 	&\lesssim&
 	C_1\|\boldsymbol{u}_{h}-\boldsymbol{v}^{conf}_h\|_{0,\Omega}
 	(\|\nabla S_h\psi\|_{0,\Omega}
 	+
 	\|\boldsymbol{P}_h\boldsymbol{v}^{\bot}\|_{0,\Omega}) \nonumber\\
 	&\lesssim&
 	C_1\|\boldsymbol{u}_{h}-\boldsymbol{v}^{conf}_h\|_{0,\Omega}
 	(\|\nabla\psi\|_{0,\Omega} + \|  \boldsymbol{v}^{\bot}\|_{0,\Omega}) \nonumber\\
 	&\lesssim&
 	C_1\|\boldsymbol{u}_{h}-\boldsymbol{v}^{conf}_h\|_{0,\Omega}
 	(\|\nabla\psi\|_{0,\Omega} + \| \mathrm{curl}~ \boldsymbol{v}^{\bot}\|_{0,\Omega}) \nonumber\\
 	&\lesssim&
 	C_1\|\boldsymbol{u}_{h}-\boldsymbol{v}^{conf}_h\|_{0,\Omega}\|\boldsymbol{v}\|_{\mathrm{curl},\Omega},
 	\end{eqnarray}
 	where $C_{1}>0$ depends only on $\|\beta\|_{0,\infty}$.
 	
 	Next, we analyze the second item on the right hand side of  \eqref{Eqn:ell_dec}, using the definition of $\tilde{\ell}_2$ \eqref{Eqn:ell_2}, the fact $(\mathrm{curl}_h(\nabla\psi - \nabla S_h\psi),\alpha p_h)_{\mathcal{T}_h}=0$ and Green formula, we have
 	\begin{eqnarray}\label{Eqn:ell_2dec}
 	\lefteqn{\tilde{\ell}_{2}\left(
 		\nabla\psi+\boldsymbol{v}^{\bot}-\nabla S_h\psi-\boldsymbol{P}_h\boldsymbol{v}^{\bot}\right)} \nonumber\\
 	&=&
 	\tilde{\ell}_{2}\left(
 	\nabla\psi-\nabla S_h\psi\right)
 	+
 	\tilde{\ell}_{2}\left(
 	\boldsymbol{v}^{\bot}-\boldsymbol{P}_h\boldsymbol{v}^{\bot}\right)    \nonumber\\ 	
 	&=&
 	(\boldsymbol{f},\nabla\psi - \nabla S_h\psi)_{\mathcal{T}_h}
 	- (\mathrm{curl}_h(\nabla\psi - \nabla S_h\psi),\alpha p_h)_{\mathcal{T}_h} - (\beta \boldsymbol{v}^{conf}_h,\nabla\psi -                \nabla S_h\psi)_{\mathcal{T}_h} \nonumber\\
 	&& + (\boldsymbol{f},\boldsymbol{v}^{\bot} -\boldsymbol{P}_h\boldsymbol{v}^{\bot})_{\mathcal{T}_h}
 	- (\mathrm{curl}_h(\boldsymbol{v}^{\bot} -\boldsymbol{P}_h\boldsymbol{v}^{\bot}),\alpha p_h)_{\mathcal{T}_h}
 	- (\beta\boldsymbol{v}^{conf}_h,\boldsymbol{v}^{\bot} - \boldsymbol{P}_h\boldsymbol{v}^{\bot})_{\mathcal{T}_h} \nonumber\\
 	&=& (\boldsymbol{f}, \nabla\psi - \nabla S_h\psi)_{\mathcal{T}_h}
 	- (\beta \boldsymbol{u}_h,\nabla\psi - \nabla S_h\psi)_{\mathcal{T}_h}
 	+ (\beta(\boldsymbol{u}_h-\boldsymbol{v}^{conf}_h), \nabla\psi - \nabla S_h\psi)_{\mathcal{T}_h} \nonumber \\
 	&&+(\boldsymbol{f},\boldsymbol{v}^{\bot} - \boldsymbol{P}_h\boldsymbol{v}^{\bot})_{\mathcal{T}_h} - (\mathrm{curl}_h(\boldsymbol{v}^{\bot}-\boldsymbol{P}_h\boldsymbol{v}^{\bot}), \alpha p_h)_{\mathcal{T}_h}
 	- (\beta \boldsymbol{u}_h,\boldsymbol{v}^{\bot} - \boldsymbol{P}_h\boldsymbol{v}^{\bot})_{\mathcal{T}_h} \nonumber\\
 	&&+ (\beta (\boldsymbol{u}_h-\boldsymbol{v}^{conf}_h),\boldsymbol{v}^{\bot} - \boldsymbol{P}_h\boldsymbol{v}^{\bot})_{\mathcal{T}_h} \nonumber \\
 	&=& -\left(\nabla \cdot \left(\boldsymbol{f}-\beta \boldsymbol{u}_h\right),  \psi- S_h\psi \right)_{\mathcal{T}_h}
 	+
 	\left\langle [\boldsymbol{f}-\beta\boldsymbol{u}_h ]_e, \psi- S_h\psi \right\rangle_{\mathcal{E}_h}
 	\nonumber\\
 	&&+
 	(\beta (\boldsymbol{u}_h-\boldsymbol{v}^{conf}_h), \nabla\psi - \nabla S_h\psi)_{\mathcal{T}_h}
 	+
 	\left(\boldsymbol{f}-\mathbf{curl}_h~\alpha p_h-\beta\boldsymbol{u}_h, \boldsymbol{v}^{\bot} - \boldsymbol{P}_h \boldsymbol{v}^{\bot}   \right)_{\mathcal{T}_h}
 	\nonumber \\
 	&&	-
 	\left\langle [[\alpha p_h]]_e, \boldsymbol{v}^{\bot} - \boldsymbol{P}_h \boldsymbol{v}^{\bot}  \right\rangle_{\mathcal{E}_h}
 	+
 	(\beta(\boldsymbol{u}_h-\boldsymbol{v}^{conf}_h), \boldsymbol{v}^{\bot} - \boldsymbol{P}_h \boldsymbol{v}^{\bot} )_{\mathcal{T}_h}.
 	\end{eqnarray}
 	
 	Applying Cauchy-Schwarz inequality, \eqref{psitau}-\eqref{qe} and $\boldsymbol{H}_0^{\bot}(\mathrm{curl},\Omega)$ is continuously imbedded in $\boldsymbol{H}^1(\Omega)\cap\boldsymbol{H}_0(\mathrm{curl},\Omega)$, we deduce
 	\begin{eqnarray*}
 		\lefteqn{ |\tilde{\ell}_{2}(\boldsymbol{v})|
 			\lesssim
 			C_1 \left(\sum_{\tau \in \mathcal{T}_h} h_{\tau}^{2}\left(\|R_{2}\left(\boldsymbol{u}_{h}, {p}_{h}\right)\|_{0,\tau}^{2}
 			+
 			\left\|R_{3}(\boldsymbol{u}_{h}\right)\|_{0,\tau}^{2}\right)\right.
 		} \\
 		&&
 		\left. \ \  \  +\sum_{e \in \mathcal{E}_h} h_{e}\left(\|J_{1}\left({p}_{h}\right)\|_{0,e}^{2}
 		+
 		\|J_{2}\left(\boldsymbol{u}_{h}\right)\|_{0,e}^{2}\right)
 		+
 		\sum_{\tau \in \mathcal{T}}\|\boldsymbol{u}_{h}-\boldsymbol{v}^{conf}_h\|_{0,\tau}^{2}\right)^{1 / 2}\|\boldsymbol{v}\|_{\mathrm{curl},\Omega},
 	\end{eqnarray*}
 	which complete the proof.
 \end{proof}
 
 In the end of this section, we give the upper bound of the error estimator.
 \begin{theorem}\label{The:Upper}
 	Let $(\boldsymbol{u},p)\in\boldsymbol{U}\times\mathbb{Q}$ and $(\boldsymbol{u}_h,p_h)\in\boldsymbol{U}_h\times\mathbb{Q}_h$ be the solutions of  \eqref{Eqn:weak_1}-\eqref{Eqn:weak_2} and \eqref{Eqn:dweak_1}-\eqref{Eqn:dweak_2}, respectively. Then there exists a constant $C_{1}>0$ depending only on $\|\beta\|_{\infty,\Omega}$ and the grid shape regularity, we have
 	\begin{equation*}
 	\| (\boldsymbol{u},p) - (\boldsymbol{u}_h,p_h) \|^2_{DG}
 	\lesssim  C_{1}\eta^2(\boldsymbol{u}_h,p_h;\mathcal{T}_h).
 	\end{equation*}
 \end{theorem}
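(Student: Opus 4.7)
The plan is to combine Lemmas \ref{Lem:leqell_1ell_2}, \ref{Lem:tildeell_1} and \ref{Eqn:tildeell_2} with a conforming-enrichment argument. I introduce an auxiliary conforming approximation $\boldsymbol{u}_h^{conf}\in \boldsymbol{U}_h^{conf}=\boldsymbol{U}_h\cap\boldsymbol{U}$ of the DG solution $\boldsymbol{u}_h$, use the triangle inequality to split the DG error into a ``conforming'' piece $\|(\boldsymbol{u}-\boldsymbol{u}_h^{conf},p-p_h)\|_{\boldsymbol{U}\times\mathbb{Q}}$ plus a ``nonconforming'' piece measured solely in terms of $\boldsymbol{u}_h-\boldsymbol{u}_h^{conf}$, and then control each of them by the indicator. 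The jump contribution $\kappa\sum_{e}h_e^{-1}\|[[\boldsymbol{u}_h]]\|_{0,e}^{2}$ appearing in the DG norm is already a part of $\eta^{2}$ and needs no further work.

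For the conforming piece, note that $\boldsymbol{u}-\boldsymbol{u}_h^{conf}\in\boldsymbol{U}$, so its curl is well defined and $\|(\boldsymbol{u}-\boldsymbol{u}_h^{conf},p-p_h)\|_{\boldsymbol{U}\times\mathbb{Q}}^{2}$ dominates
\[
\|p-p_h\|_{0,\Omega}^{2}+\|\boldsymbol{u}-\boldsymbol{u}_h^{conf}\|_{0,\Omega}^{2}+\|\mathrm{curl}(\boldsymbol{u}-\boldsymbol{u}_h^{conf})\|_{0,\Omega}^{2}.
\]
Applying Lemma \ref{Lem:leqell_1ell_2} with $\boldsymbol{v}_h^{conf}=\boldsymbol{u}_h^{conf}$ and then plugging in Lemmas \ref{Lem:tildeell_1} and \ref{Eqn:tildeell_2} bounds this by (i) the element- and edge-residual terms already present in $\eta^{2}$ and (ii) the additional ``nonconformity'' terms $\sum_{\tau}\|\boldsymbol{u}_h-\boldsymbol{u}_h^{conf}\|_{0,\tau}^{2}$ and $\sum_{\tau}\|\mathrm{curl}_h(\boldsymbol{u}_h-\boldsymbol{u}_h^{conf})\|_{0,\tau}^{2}$. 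The nonconforming piece from the triangle inequality contributes $\|\boldsymbol{u}_h-\boldsymbol{u}_h^{conf}\|_{0,\Omega}^{2}+\|\mathrm{curl}_h(\boldsymbol{u}_h-\boldsymbol{u}_h^{conf})\|_{0,\Omega}^{2}$, of exactly the same form, so the whole estimate will close once the latter two quantities are shown to be controlled by the jump indicator.

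The crux of the proof, and the step I expect to be the main obstacle, is therefore to select $\boldsymbol{u}_h^{conf}$ in such a way that
\[
\|\boldsymbol{u}_h-\boldsymbol{u}_h^{conf}\|_{0,\Omega}^{2}+\|\mathrm{curl}_h(\boldsymbol{u}_h-\boldsymbol{u}_h^{conf})\|_{0,\Omega}^{2}\;\lesssim\;\sum_{e\in\mathcal{E}_h}h_e^{-1}\|[[\boldsymbol{u}_h]]\|_{0,e}^{2}.
\]
I would obtain such a $\boldsymbol{u}_h^{conf}$ by a Karakashian--Pascal style averaging operator tailored to the lowest-order N\'ed\'elec space $\boldsymbol{U}_h$: on each interior edge set the single tangential degree of freedom to the mean of its two element traces, and set it to zero on boundary edges. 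A standard shape-regular scaling argument on the fixed-size local patch around each edge (trace inequality on each element, inverse estimate on edges, summation over neighbours) converts tangential jumps of $\boldsymbol{u}_h$ into the required $L^{2}$ and piecewise-$\mathrm{curl}$ bounds with constants depending only on the mesh regularity. Collecting all the pieces yields $\|(\boldsymbol{u},p)-(\boldsymbol{u}_h,p_h)\|_{DG}^{2}\lesssim C_{1}\eta^{2}(\boldsymbol{u}_h,p_h;\mathcal{T}_h)$, which finishes the proof.
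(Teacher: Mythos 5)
Your proposal is correct and follows essentially the same route as the paper: the same triangle-inequality splitting into a conforming error piece handled by Lemmas \ref{Lem:leqell_1ell_2}, \ref{Lem:tildeell_1} and \ref{Eqn:tildeell_2}, plus a nonconformity piece and the jump term absorbed into $\eta^{2}$. The only difference is at the step you flagged as the crux: the paper obtains the conforming approximation $\boldsymbol{u}_h^{conf}$ and the bound of $\|\boldsymbol{u}_h-\boldsymbol{u}_h^{conf}\|_{0,\Omega}^2+\|\mathrm{curl}_h(\boldsymbol{u}_h-\boldsymbol{u}_h^{conf})\|_{0,\Omega}^2$ by the scaled tangential jumps by citing Proposition 4.10 of \cite{LohrengelNicaise07:27}, whereas you sketch the underlying Karakashian--Pascal-type edge-averaging construction directly, which is an equivalent justification.
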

 \begin{proof}
 	For any $\boldsymbol{v}^{conf}_h\in \boldsymbol{U}_h^{conf}$, using the definition of $\|\cdot\|_{DG}$ in \eqref{Eqn:DG}, triangle inequality,  we get
 	\begin{eqnarray}
 	\lefteqn{\|(\boldsymbol{u}, {p})-\left(\boldsymbol{u}_{h}, {p}_{h}\right)\|_{D G}^{2}} \nonumber \\
 	&&=\|{p}-{p}_{h}\|_{0,\Omega}^{2}
 	+\|\boldsymbol{u}-\boldsymbol{u}_{h}\|_{0,\Omega}^{2}
 	+\|\mathrm{curl}~\boldsymbol{u}-\mathrm{curl}_h~\boldsymbol{u}_{h}\|_{0,\Omega}^{2}
 	+
 	\kappa\sum\limits_{e \in \mathcal{E}_{h}} h_{e}^{-1}\|[[\boldsymbol{u}_{h}]]\|_{0,e}^{2} \nonumber\\
 	&&=|{p}-{p}_{h}\|_{0,\Omega}^{2}
 	+\|\boldsymbol{u}-\boldsymbol{u}_{h}\|_{\mathrm{curl},\Omega}^{2}
 	+\kappa\sum\limits_{e \in \mathcal{E}_{h}} h_{e}^{-1}\|[[\boldsymbol{u}_{h}]]\|_{0,e}^{2} \nonumber\\
 	&&\leq \|p-{p}_{h}\|_{0,\Omega}^{2}+\|\boldsymbol{u}-\boldsymbol{v}_h^{conf}\|_{\mathrm{curl},\Omega}^{2}
 	+
 	\|\boldsymbol{v}^{conf}_h-\boldsymbol{u}_{h}\|_{\mathrm{curl},\Omega}^{2}
 	+\kappa\sum\limits_{e \in \mathcal{E}_{h}} h_{e}^{-1}\|[[\boldsymbol{u}_{h}]]\|_{0,e}^{2} \nonumber\\
 	&&=	\|(\boldsymbol{u}-\boldsymbol{v}^{conf}_h, {p}-{p}_{h})\|_{\boldsymbol{U} \times \mathbb{Q}}
 	+
 	\|\boldsymbol{v}^{conf}_h-\boldsymbol{u}_{h}\|_{\mathrm{curl},\Omega}^{2}
 	+\kappa\sum\limits_{e \in \mathcal{E}_{h}} h_{e}^{-1}\|[[\boldsymbol{u}_{h}]]\|_{0,e}^{2}.\label{Eqn:up-uhph}
 	\end{eqnarray}
 	
 	For $\boldsymbol{u}_h\in \boldsymbol{U}_h$, there exists $\boldsymbol{u}^{conf}_{h} \in \boldsymbol{U}_h^{conf}$, $\boldsymbol{u}^{\bot}_{h} \in \boldsymbol{U}_h^{\bot}$ satisfing (see Proposition 4.10 of
 	\cite{LohrengelNicaise07:27})
 	\begin{equation*}\label{u_h}
 	\boldsymbol{u}_h=\boldsymbol{u}^{conf}_{h}+\boldsymbol{u}^{\bot}_{h},
 	\end{equation*}
 	and
 	\begin{eqnarray}\label{Eqn:v_c2}
 	h^2_\tau\|\mathrm{curl}_h(\boldsymbol{u}_{h}-\boldsymbol{u}^{conf}_{h})\|^2_{0,\Omega}
 	+
 	\|\boldsymbol{u}_h-\boldsymbol{u}^{conf}_{h}\| ^2_{0,\Omega}
 	\lesssim
 	h^2_\tau\sum\limits_{e\in \mathcal{E}_h}h^{-1}_e \left\| [[ \boldsymbol{u}_{h}]] _e\right\|^2_{0, e}.
 	\end{eqnarray}
 	
 	At last, choosing $\boldsymbol{v}_h^{conf} = \boldsymbol{u}_h^{conf}$ in \eqref{Eqn:up-uhph} and using Lemmas \ref{Lem:leqell_1ell_2}, \ref{Lem:tildeell_1}, \ref{Eqn:tildeell_2} and \eqref{Eqn:v_c2}, we have
 	\begin{eqnarray*}
 		\lefteqn{\|(\boldsymbol{u}, {p})-\left(\boldsymbol{u}_{h}, {p}_{h}\right)\|_{D G}^{2}} \\
 		&\leq&\|(\boldsymbol{u}-\boldsymbol{u}^{conf}_h, {p}-{p}_{h})\|_{\boldsymbol{U} \times \mathbb{Q}}
 		+
 		\|\boldsymbol{u}^{conf}_h-\boldsymbol{u}_{h}\|_{\mathrm{curl},\Omega}^{2}
 		+\kappa\sum\limits_{e \in \mathcal{E}_{h}} h_{e}^{-1}\|[[\boldsymbol{u}_{h}]]\|_{0,e}^{2}\\
 		&\lesssim&
 		\|\tilde{\ell}_{1}\|_{\mathbb{Q}^{*}}^{2}+\|\tilde{\ell}_{2}\|_{\boldsymbol{U}^{*}}^{2}
 		+
 		\|\boldsymbol{u}^{conf}_h-\boldsymbol{u}_{h}\|_{\mathrm{curl},\Omega}^{2}
 		+
 		\kappa\sum\limits_{e \in \mathcal{E}_{h}} h_{e}^{-1}\|[[\boldsymbol{u}_{h}]]\|_{0,e}^{2}\\
 		&\lesssim& C_{1}\eta^2(\boldsymbol{v}_h,p_h;\mathcal{T}_h),
 	\end{eqnarray*}
 	which  completes the proof.
 \end{proof}

 
 \section{Efficiency analysis}\label{sec:5}
 
 In this section, we will prove that the error estimator is efficient. In the Section \ref{sec:4}, we have already proved that the error estimator is reliable. Hence, the error estimator is a good indicator of energy error, that is
 \begin{equation}\label{Eqn:goodind}
 \eta^{2}\left(\boldsymbol{u}_{h},p_{h} ; \mathcal{T}_h\right)
 \approx
 \|\left(p-p_{h}, \boldsymbol{u}-\boldsymbol{u}_{h}\right)\|_{\mathrm{DG}}^{2}.
 \end{equation}
 
 We fitst introduce the following theorem, which is the efficiency of the error estimator.
 \begin{theorem}\label{The:eff}
 	Let $(\boldsymbol{u},p)\in\boldsymbol{U}\times\mathbb{Q}$ and $(\boldsymbol{u}_h,p_h)\in\boldsymbol{U}_h\times\mathbb{Q}_h$ be the solutions of  \eqref{Eqn:weak_1}-\eqref{Eqn:weak_2} and \eqref{Eqn:dweak_1}-\eqref{Eqn:dweak_2}, respectively.
 	The space of all piecewise polynomial of arbitrary order on $\mathcal{T}_h$ is denoted by $P(\mathcal{T}_h)$.
 	Then for any $\boldsymbol{f}_{h} \in \mathcal{S}_{h}^{\text {conf }}:=P(\mathcal{T}_h) \cap \boldsymbol{H}(\operatorname{div} ; \Omega)$, we have
 	$$
 	\eta^{2}\left(\boldsymbol{u}_{h} ,p_{h} ; \mathcal{T}_h\right)
 	\lesssim
 	C_2\Big(
 	\|\left(p-p_{h}, \boldsymbol{u}-\boldsymbol{u}_{h}\right)\|_{\mathrm{DG}}^{2}
 	+
 	\|\boldsymbol{f}-\boldsymbol{f}_{h}\|_{0,\Omega}^{2}
 	+
 	\sum_{\tau \in \mathcal{T}_h} h_{\tau}^{2}\|\nabla \cdot\left(\boldsymbol{f}-\boldsymbol{f}_{h}\right)\|_{0,\tau}^{2}
 	\Big),
 	$$
 	where $C_2$ is a constant depending on  $\|\alpha\|_{0,\infty}$ and  $\|\beta\|_{0,\infty}$.
 \end{theorem}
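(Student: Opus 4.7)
The plan is to bound each of the six ingredients of $\eta^{2}(\boldsymbol{u}_{h},p_{h};\tau)$ separately by the standard Verf\"urth residual/bubble-function technique, and sum the local bounds over $\mathcal{T}_{h}$. Two of the six require essentially no work. Since $p=\mathrm{curl}\,\boldsymbol{u}$ in the strong sense, $R_{1}(\boldsymbol{u}_{h},p_{h})|_{\tau}=(p_{h}-p)+\mathrm{curl}(\boldsymbol{u}-\boldsymbol{u}_{h})|_{\tau}$, so $\|R_{1}\|_{0,\tau}^{2}\lesssim\|p-p_{h}\|_{0,\tau}^{2}+\|\mathrm{curl}_{h}(\boldsymbol{u}-\boldsymbol{u}_{h})\|_{0,\tau}^{2}$, which is already contained in the DG error; and the tangential jump contribution $\kappa h_{e}^{-1}\|J_{3}\|_{0,e}^{2}$ is itself a summand in the definition of $\|(\boldsymbol{u},p)-(\boldsymbol{u}_{h},p_{h})\|_{DG}^{2}$, so nothing needs to be done for $J_{3}$.

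For the volume residual $R_{2}$ I will use the polynomial surrogate $\widetilde{R}_{2}=\boldsymbol{f}_{h}-\boldsymbol{curl}_{h}(\alpha p_{h})-\beta\boldsymbol{u}_{h}$ and the element bubble $b_{\tau}$ supported in $\tau$. From the norm equivalence $\|\widetilde{R}_{2}\|_{0,\tau}^{2}\lesssim(\widetilde{R}_{2},b_{\tau}\widetilde{R}_{2})_{\tau}$, the identity $R_{2}=\boldsymbol{curl}(\alpha(p-p_{h}))+\beta(\boldsymbol{u}-\boldsymbol{u}_{h})$ obtained from $\boldsymbol{curl}(\alpha p)+\beta\boldsymbol{u}=\boldsymbol{f}$, one integration by parts on the curl (the boundary term vanishes since $b_{\tau}\widetilde{R}_{2}\in H_{0}^{1}(\tau)$), and the inverse estimate $\|\mathrm{curl}(b_{\tau}\widetilde{R}_{2})\|_{0,\tau}\lesssim h_{\tau}^{-1}\|\widetilde{R}_{2}\|_{0,\tau}$, I obtain $h_{\tau}\|R_{2}\|_{0,\tau}\lesssim\|p-p_{h}\|_{0,\tau}+h_{\tau}\|\boldsymbol{u}-\boldsymbol{u}_{h}\|_{0,\tau}+h_{\tau}\|\boldsymbol{f}-\boldsymbol{f}_{h}\|_{0,\tau}$. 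An analogous scalar-bubble argument applied to $R_{3}$, combined with $\nabla\cdot(\boldsymbol{f}-\beta\boldsymbol{u})=\nabla\cdot\boldsymbol{curl}(\alpha p)=0$ so that weakly $R_{3}=\nabla\cdot(\beta(\boldsymbol{u}-\boldsymbol{u}_{h}))+\nabla\cdot(\boldsymbol{f}-\boldsymbol{f}_{h})$, yields $h_{\tau}\|R_{3}\|_{0,\tau}\lesssim\|\boldsymbol{u}-\boldsymbol{u}_{h}\|_{0,\tau}+h_{\tau}\|\nabla\cdot(\boldsymbol{f}-\boldsymbol{f}_{h})\|_{0,\tau}$.

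The jumps $J_{1}$ and $J_{2}$ will be handled by the edge-bubble technique on the patch $\omega_{e}=\tau_{1}\cup\tau_{2}$. The key regularity observations are that $\boldsymbol{curl}(\alpha p)=\boldsymbol{f}-\beta\boldsymbol{u}\in\boldsymbol{L}^{2}(\Omega)$ forces $\alpha p\in H^{1}(\Omega)$, hence $[[\alpha p]]_{e}=0$ in the trace sense even across a subdomain interface where $\alpha$ is discontinuous, and that $\boldsymbol{f}-\beta\boldsymbol{u}=\boldsymbol{curl}(\alpha p)$ is divergence-free, hence $[\boldsymbol{f}-\beta\boldsymbol{u}]_{e}=0$. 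Consequently $J_{1}=[[\alpha(p_{h}-p)]]_{e}$ and, modulo an $\boldsymbol{f}-\boldsymbol{f}_{h}$ oscillation, $J_{2}=-[\beta(\boldsymbol{u}-\boldsymbol{u}_{h})]_{e}$. Taking a polynomial lifting $\widetilde{J}_{i}$ of each jump to $\omega_{e}$, setting $\phi_{e}=b_{e}\widetilde{J}_{i}$ which vanishes on $\partial\omega_{e}\setminus e$, and applying on each $\tau_{i}$ the 2D Green formulas $\int_{\tau}\boldsymbol{curl}\psi\cdot\boldsymbol{v}=\int_{\tau}\psi\,\mathrm{curl}\,\boldsymbol{v}-\int_{\partial\tau}\psi\,\boldsymbol{v}\cdot\boldsymbol{t}$ and $\int_{\tau}\nabla\cdot\boldsymbol{w}\,\phi=-\int_{\tau}\boldsymbol{w}\cdot\nabla\phi+\int_{\partial\tau}\boldsymbol{w}\cdot\boldsymbol{n}\,\phi$, the boundary integrals collapse to $\int_{e}J_{i}\cdot\phi_{e}$ while the remaining volume integrals involve the already-controlled bulk quantities. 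The standard scalings $\|\phi_{e}\|_{0,\omega_{e}}\lesssim h_{e}^{1/2}\|J_{i}\|_{0,e}$ and $\|\nabla\phi_{e}\|_{0,\omega_{e}}+\|\mathrm{curl}\,\phi_{e}\|_{0,\omega_{e}}\lesssim h_{e}^{-1/2}\|J_{i}\|_{0,e}$ deliver $h_{e}^{1/2}\|J_{1}\|_{0,e}\lesssim\|p-p_{h}\|_{0,\omega_{e}}+h_{e}(\|R_{2}\|_{0,\omega_{e}}+\|\boldsymbol{u}-\boldsymbol{u}_{h}\|_{0,\omega_{e}})$ and a similar bound for $h_{e}^{1/2}\|J_{2}\|_{0,e}$ in terms of $\|\boldsymbol{u}-\boldsymbol{u}_{h}\|_{0,\omega_{e}}$, $h_{e}\|R_{3}\|_{0,\omega_{e}}$, and $h_{e}\|\boldsymbol{f}-\boldsymbol{f}_{h}\|_{0,\omega_{e}}$.

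Summing over $\mathcal{T}_{h}$, using shape-regularity to control the finite overlap of the patches $\omega_{e}$, and absorbing the $\|R_{2}\|$ and $\|R_{3}\|$ contributions appearing in the jump estimates by the bounds derived in the second paragraph, delivers the claimed inequality with $C_{2}$ depending only on $\|\alpha\|_{0,\infty}$ and $\|\beta\|_{0,\infty}$ (these norms enter precisely whenever the strong equation is invoked). The main technical obstacle is the $J_{1}$ estimate across a subdomain interface where $\alpha$ jumps: one cannot pull $\alpha$ outside the tangential jump, and the correct viewpoint is to treat $\alpha p$ as a single $H^{1}$-regular scalar and carry out the lifting and Green-formula manipulation on the composite $\alpha p_{h}$ rather than on $p_{h}$ alone.
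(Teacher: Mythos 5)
Your proposal is correct and follows essentially the same route as the paper: the $J_3$ term and the $R_1$ residual are absorbed directly into the DG norm (using $p=\mathrm{curl}\,\boldsymbol{u}$), while $R_2,R_3$ are handled with element bubbles and $J_1,J_2$ with edge bubbles, using exactly the paper's key facts that $[[\alpha p]]_e=0$ (equivalently $[[\alpha\,\mathrm{curl}\,\boldsymbol{u}]]_e=0$), that $\boldsymbol{f}-\beta\boldsymbol{u}$ is divergence-free, and that $[\boldsymbol{f}-\boldsymbol{f}_h]_e=0$ for $\boldsymbol{f}_h\in\mathcal{S}_h^{\mathrm{conf}}$. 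The only differences are cosmetic bookkeeping (e.g.\ your $J_1$ bound uses $\|p-p_h\|$ where the paper splits it into $R_1$ plus the curl error, and your stated placement of the data-oscillation terms in the $R_3$ and $J_2$ bounds differs slightly in powers of $h_e$), none of which affects the final estimate.
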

 \begin{remark}
 	We assume $\boldsymbol{f} \in \boldsymbol{H}(\operatorname{div} ; \Omega)$ to show the proof of the Theorem \ref{The:eff}.
 	While, we should note that the conditional $\boldsymbol{f}_{h} \in \mathcal{S}_{h}^{\text {conf }}$ is only required in the Lemma \ref{lem:J2} ( See \eqref{f-f_h}).
 \end{remark}
 For any $\tau \in \mathcal{T}_{h}$, $e \in \mathcal{E}_{h}$, let $b_{\tau}$ and $b_{e}$ be standard the interior bubble functions and edge bubble functions, respectively. We introduce the following two lemmas which are used in later proofs.
 
 \begin{lemma}[\cite{HoustonPerugia07:122}, Lemma 5.1]\label{Lem:3.4}
 	Let $\chi$ be a scalar polynomial function on $\tau$, then
 	\begin{eqnarray}
 	&
 	\|b_\tau \chi\|_{0,\tau}
 	\lesssim \|\chi\|_{0,\tau},\label{Eqn:3.23} \\
 	&
 	\| \chi\|_{0,\tau}  \lesssim
 	\|b_\tau^{\frac{1}{2}}\chi\|_{0,\tau},     \label{Eqn:3.24}\\
 	&
 	\|\nabla \left(b_{\tau}\chi\right) \|_{0,\tau}  \lesssim
 	h_\tau^{-1}\|\chi\|_{0,\tau}. \label{Eqn:3.25}
 	\end{eqnarray}
 	
 	Moreover, let $e$ be an edge shared by two triangles $\tau_1$ and $\tau_2$, let $\phi$  be a scalar polynomial function on $e$, the
 	\begin{equation}\label{Eqn:3.26}
 	\|\phi\|_{0,e}
 	\lesssim \|b_e^{\frac{1}{2}} \phi \|_{0,e} .
 	\end{equation}
 	
 	In addition, there exists an extension of $b_e\phi$ as $\tilde{\phi}_{b} \in H_{0}^{1}\left(\left(\overline{\tau}_1 \cup \overline{\tau}_2\right)^{\circ}\right)$ such that $ \tilde{\phi}_{b}|_e=b_e\phi$  and
 	\begin{eqnarray}\label{Eqn:3.27}
 	&
 	\|\tilde{\phi}_{b}\|_{0,\tau}
 	\lesssim h_e^{\frac{1}{2}}\|\phi\|_{0,e}, \quad\quad \forall \tau\in \omega_e, \\  \label{Eqn:3.28}
 	&
 	\|\nabla \tilde{\phi}_{b}\|_{0,\tau}
 	\lesssim h_e^{-\frac{1}{2}}\|\phi\|_{0,e},  \quad \forall \tau \in \omega_e.
 	\end{eqnarray}
 \end{lemma}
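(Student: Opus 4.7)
The plan is to reduce all six estimates to a reference-element configuration via the standard affine pullback argument, then invoke equivalence of norms on finite-dimensional polynomial spaces on the reference element. For the interior estimates, I fix a reference triangle $\hat\tau$ and let $b_{\hat\tau} = \lambda_1 \lambda_2 \lambda_3$, the product of the three barycentric coordinates; for the edge estimates, I fix a reference patch $\hat\omega_e = \hat\tau_1 \cup \hat\tau_2$ with common edge $\hat e$, and let $b_{\hat e}$ be the product of the two barycentric coordinates whose vanishing edges are the endpoints of $\hat e$. Under the affine map $F_\tau: \hat\tau \to \tau$ with $|\det DF_\tau| \approx h_\tau^2$ and $\|DF_\tau\|, \|DF_\tau^{-1}\|$ uniformly controlled by shape-regularity, one has the standard scaling $\|v\|_{0,\tau} \approx h_\tau \|\hat v\|_{0,\hat\tau}$ and $\|\nabla v\|_{0,\tau} \approx \|\hat\nabla \hat v\|_{0,\hat\tau}$; on the edge, $\|\phi\|_{0,e} \approx h_e^{1/2}\|\hat\phi\|_{0,\hat e}$.

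For \eqref{Eqn:3.23}, \eqref{Eqn:3.24}, and \eqref{Eqn:3.25}, I would fix any integer $k$, restrict to $\chi \in P_k(\tau)$, and observe on the reference element that the maps $\hat\chi \mapsto \|b_{\hat\tau}\hat\chi\|_{0,\hat\tau}$, $\hat\chi \mapsto \|b_{\hat\tau}^{1/2}\hat\chi\|_{0,\hat\tau}$, and $\hat\chi \mapsto \|\hat\chi\|_{0,\hat\tau}$ are all norms on the finite-dimensional space $P_k(\hat\tau)$ (the first two because $b_{\hat\tau} > 0$ in the interior). By finite-dimensional norm equivalence they are mutually comparable with constants depending only on $k$. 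Similarly $\|\hat\nabla(b_{\hat\tau}\hat\chi)\|_{0,\hat\tau} \lesssim \|\hat\chi\|_{0,\hat\tau}$ on $P_k(\hat\tau)$. Pulling back to $\tau$ using the scaling relations gives the three interior estimates with $h_\tau^{-1}$ appearing only in \eqref{Eqn:3.25}, because the gradient scaling contributes one power of $h_\tau^{-1}$ while the two $L^2$ volume factors cancel.

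For \eqref{Eqn:3.26}, I apply the same norm-equivalence principle on $\hat e$: $\hat\phi \mapsto \|b_{\hat e}^{1/2}\hat\phi\|_{0,\hat e}$ and $\hat\phi \mapsto \|\hat\phi\|_{0,\hat e}$ are both norms on $P_k(\hat e)$, hence equivalent, and scaling passes the estimate to $e$. For the extension in \eqref{Eqn:3.27}--\eqref{Eqn:3.28}, I construct $\hat{\tilde\phi}_b$ on $\hat\omega_e$ as follows: pick a polynomial lifting $E\hat\phi$ of $\hat\phi$ to $\hat\omega_e$ (for instance, extend constantly along the family of lines perpendicular to $\hat e$ inside each reference triangle, truncated to a polynomial of matching degree) and set $\hat{\tilde\phi}_b := b_{\hat e}\, E\hat\phi$. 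Since $b_{\hat e}$ vanishes on $\partial\hat\omega_e \setminus \hat e$ and on the endpoints of $\hat e$, and $E\hat\phi$ is a polynomial on each reference triangle, the function $\hat{\tilde\phi}_b$ extended by zero belongs to $H^1_0(\hat\omega_e)$; by construction $\hat{\tilde\phi}_b|_{\hat e} = b_{\hat e}\hat\phi$. Finite-dimensional norm equivalence on $\hat\omega_e$ gives $\|\hat{\tilde\phi}_b\|_{0,\hat\tau_i}, \|\hat\nabla \hat{\tilde\phi}_b\|_{0,\hat\tau_i} \lesssim \|\hat\phi\|_{0,\hat e}$; pulling back with the volume and edge scalings produces the factors $h_e^{1/2}$ and $h_e^{-1/2}$, respectively.

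The only genuinely delicate step is the construction of a \emph{single} reference extension $E$ that is polynomial on each reference triangle and yields an $H^1_0(\hat\omega_e)$ function after multiplication by $b_{\hat e}$; once this is in place all estimates are routine scaling. The constants depend on the maximal polynomial degree allowed for $\chi$ and $\phi$ (which is bounded since the discrete spaces $\boldsymbol{U}_h$ and $\mathbb{Q}_h$ have fixed low order) and on the shape-regularity of $\mathcal{T}_h$, but not on $h$.
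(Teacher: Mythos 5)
The paper does not prove this lemma at all: it is quoted verbatim from Houston--Perugia--Sch\"otzau (reference \cite{HoustonPerugia07:122}, Lemma 5.1) and used as a black box, so there is no in-paper argument to compare against. Your proof is the standard one underlying that cited result --- affine pullback to a reference configuration, equivalence of norms on the finite-dimensional spaces $P_k(\hat\tau)$ and $P_k(\hat e)$ (using that $b_{\hat\tau}$ and $b_{\hat e}$ are positive in the interior, so the weighted quantities are genuine norms), and the scalings $\|v\|_{0,\tau}\approx h_\tau\|\hat v\|_{0,\hat\tau}$, $\|\nabla v\|_{0,\tau}\approx\|\hat\nabla\hat v\|_{0,\hat\tau}$, $\|\phi\|_{0,e}\approx h_e^{1/2}\|\hat\phi\|_{0,\hat e}$ together with $h_\tau\approx h_e$ from shape regularity --- and it is correct. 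One small tidy-up: in the edge extension you do not need to ``truncate'' anything, since extending $\hat\phi$ constantly along a fixed direction is just $\hat\phi\circ\hat\pi$ for an affine projection $\hat\pi$ onto the line containing $\hat e$, which is already a polynomial of the same degree on each reference triangle; continuity of $b_{\hat e}\,E\hat\phi$ across $\hat e$ and its vanishing on $\partial(\tau_1\cup\tau_2)$ then give membership in $H_0^1$ directly. This is exactly the classical Verf\"urth bubble-function construction, so your proof supplies the details the paper delegates to the citation rather than taking a different route.
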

 
 \begin{lemma}[\cite{HoustonPerugia07:122}, Lemma 5.2]\label{Lem:3.5}
 	Let $\boldsymbol{w}$ be a vector-valued polynomial function on $\tau$, then
 	\begin{eqnarray}
 	&
 	\|b_\tau \boldsymbol{w}\|_{0,\tau} \lesssim\|\boldsymbol{w}\|_{0,\tau},\label{Eqn:3.29}
 	\\
 	&
 	\| \boldsymbol{w}\|_{0,\tau}\lesssim \|b_\tau^{\frac{1}{2}} \boldsymbol{w}\|_{0,\tau}, \label{Eqn:3.30}
 	\\
 	&
 	\|\mathrm{curl}\left(b_\tau \boldsymbol{w}\right)\|_{0,\tau} \lesssim h_\tau^{-1}\|\boldsymbol{w}\|_{0,\tau}.\label{Eqn:3.31}
 	\end{eqnarray}
 	
 	Moreover, let $e$ be an edge shared by two triangles on $\tau_1$ and $\tau_2$, and let $\boldsymbol{w}$ be a vector-valued polynomial function on $e$, then
 	\begin{equation}\label{Eqn:3.32}
 	\|\boldsymbol{w}\|_{0,e} \lesssim  \|b_e^{\frac{1}{2}} \boldsymbol{w} \|_{0,e}.
 	\end{equation}
 	
 	In addition, there exists an extension of $b_e\boldsymbol{w}$ as $\tilde{\boldsymbol{w}}_{b} \in \left(H_{0}^{1}\left(\left(\overline{\tau}_1 \cup \overline{\tau}_2\right)^{\circ}\right)\right)^{2}$, such that $\left.\tilde{\boldsymbol{w}}_{b}\right|_e=b_e\boldsymbol{w}$, and
 	\begin{eqnarray}\label{Eqn:3.33}
 	&
 	\|\tilde{\boldsymbol{w}}_{b}c\|_{0,\tau} \lesssim h_e^{\frac{1}{2}}\|\boldsymbol{w}\|_{0,e},  \quad \quad\forall \tau \in \omega_e,
 	\\  \label{Eqn:3.34}
 	&
 	\|\mathrm{curl}~\tilde{\boldsymbol{w}}_{b}\|_{0,\tau} \lesssim h_e^{-\frac{1}{2}}\|\boldsymbol{w}\|_{0,e},  \quad \forall \tau \in \omega_e.
 	\end{eqnarray}
 \end{lemma}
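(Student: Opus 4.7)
The statement to prove is Lemma \ref{Lem:3.5}, which is the vector-valued analogue of the scalar bubble-function lemma Lemma \ref{Lem:3.4}. My plan is to reduce every assertion to its scalar counterpart by working componentwise. Write $\boldsymbol{w}=(w_1,w_2)$ with each $w_i$ a scalar polynomial of the same degree on $\tau$ (or on $e$, respectively), and note the basic identity $\|\boldsymbol{w}\|_{0,\tau}^2=\|w_1\|_{0,\tau}^2+\|w_2\|_{0,\tau}^2$ and similarly on $e$. All constants below depend only on shape regularity and polynomial degree; this is the standard content of a scaling argument to a reference element combined with equivalence of norms on the finite-dimensional polynomial space.

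The first two estimates \eqref{Eqn:3.29} and \eqref{Eqn:3.30} follow immediately by applying \eqref{Eqn:3.23} and \eqref{Eqn:3.24} to $w_1$ and $w_2$ separately and summing the squares. For \eqref{Eqn:3.31}, I expand
\begin{equation*}
\mathrm{curl}(b_\tau\boldsymbol{w})=\partial_x(b_\tau w_2)-\partial_y(b_\tau w_1),
\end{equation*}
bound the $L^2(\tau)$-norm by $\|\nabla(b_\tau w_1)\|_{0,\tau}+\|\nabla(b_\tau w_2)\|_{0,\tau}$, and then apply \eqref{Eqn:3.25} to each component, yielding the factor $h_\tau^{-1}\|\boldsymbol{w}\|_{0,\tau}$. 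The edge estimate \eqref{Eqn:3.32} is again immediate from the componentwise application of \eqref{Eqn:3.26}.

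For the extension result, I construct $\tilde{\boldsymbol{w}}_b=(\tilde{w}_{1,b},\tilde{w}_{2,b})$ componentwise, where each $\tilde{w}_{i,b}\in H_0^1((\overline{\tau}_1\cup\overline{\tau}_2)^\circ)$ is the scalar extension provided by Lemma \ref{Lem:3.4} with $\tilde{w}_{i,b}|_e=b_e w_i$. Then $\tilde{\boldsymbol{w}}_b\in (H_0^1((\overline{\tau}_1\cup\overline{\tau}_2)^\circ))^2$ and $\tilde{\boldsymbol{w}}_b|_e=b_e\boldsymbol{w}$ by construction. Inequality \eqref{Eqn:3.33} follows directly by applying \eqref{Eqn:3.27} to each $\tilde{w}_{i,b}$ and summing squares. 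For \eqref{Eqn:3.34}, I again use
\begin{equation*}
\mathrm{curl}\,\tilde{\boldsymbol{w}}_b=\partial_x\tilde{w}_{2,b}-\partial_y\tilde{w}_{1,b},
\end{equation*}
bound it by $\|\nabla\tilde{w}_{1,b}\|_{0,\tau}+\|\nabla\tilde{w}_{2,b}\|_{0,\tau}$, and invoke \eqref{Eqn:3.28} to pick up the factor $h_e^{-1/2}\|\boldsymbol{w}\|_{0,e}$.

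There is no genuine obstacle here: the lemma is purely a mechanical vector-valued repackaging of Lemma \ref{Lem:3.4}, made possible because in two dimensions the $\mathrm{curl}$ of a vector field is a first-order linear combination of partial derivatives of its two scalar components, which are exactly what \eqref{Eqn:3.25} and \eqml{Eqn:3.28} control. The only care needed is to ensure the constructed $\tilde{\boldsymbol{w}}_b$ lies in the correct $H_0^1$ space on the patch $(\overline{\tau}_1\cup\overline{\tau}_2)^\circ$, which is inherited componentwise from the scalar construction.
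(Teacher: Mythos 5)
Your componentwise reduction to Lemma \ref{Lem:3.4} is correct: writing $\boldsymbol{w}=(w_1,w_2)$, noting that in two dimensions $\mathrm{curl}\,\boldsymbol{v}=\partial_x v_2-\partial_y v_1$ is pointwise dominated by the componentwise gradients, and building the edge extension component by component, each of \eqref{Eqn:3.29}--\eqref{Eqn:3.34} follows from the corresponding scalar estimate \eqref{Eqn:3.23}--\eqref{Eqn:3.28}. There is nothing in the paper to compare against, since the lemma is quoted without proof from \cite{HoustonPerugia07:122}, Lemma 5.2; your argument is the standard one and legitimately fills that gap.
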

 
 Next, we estimate each term of the error indicator $\eta(u_h,p_h;\mathcal{T}_h)$ given by \eqref{eta}, separately. We first estimate the first term $R_{1}$ of the error estimator.
 
 \begin{lemma}\label{Lemm:R1}
 	Let $(\boldsymbol{u},p)\in\boldsymbol{U}\times\mathbb{Q}$ and $(\boldsymbol{u}_h,p_h)\in\boldsymbol{U}_h\times\mathbb{Q}_h$ be the solutions of  \eqref{Eqn:weak_1}-\eqref{Eqn:weak_2} and \eqref{Eqn:dweak_1}-\eqref{Eqn:dweak_2}, respectively. Then we obtain
 	\begin{equation}\label{eqn:R1}
 	\sum_{\tau \in \mathcal{T}_h}\|R_{1}\left(\boldsymbol{u}_{h},p_{h}  \right)\|_{0,\tau}^{2}
 	\leqslant
 	\|\left(p-p_{h}, \boldsymbol{u}-\boldsymbol{u}_{h}\right)\|_{\mathrm{DG}}^{2}.
 	\end{equation}
 \end{lemma}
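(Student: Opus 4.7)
The plan is to exploit the strong-form identity $p=\mathrm{curl}\,\boldsymbol{u}$ coming from \eqref{Eqn:hq} and rewrite $R_{1}(\boldsymbol{u}_h,p_h)$ as a sum of two ``physical'' error quantities that already appear inside $\|\cdot\|_{DG}$. Since the estimator component $R_{1}$ is purely algebraic (no bubble functions, no edge terms, no coefficient weights), no Lemma~\ref{Lem:3.4} or Lemma~\ref{Lem:3.5} tricks are needed; the whole argument is a one-line identity followed by a triangle inequality.

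Concretely, I would first note that on every element $\tau\in\mathcal{T}_h$ the continuous operator $\mathrm{curl}$ coincides with the broken operator $\mathrm{curl}_h$ applied to $\boldsymbol{u}$, so using $p=\mathrm{curl}\,\boldsymbol{u}$ I would write
\begin{equation*}
R_{1}(\boldsymbol{u}_h,p_h)\big|_\tau
= p_h - \mathrm{curl}_h \boldsymbol{u}_h
= -(p-p_h) + \mathrm{curl}_h(\boldsymbol{u}-\boldsymbol{u}_h).
\end{equation*}
Squaring, applying the elementary inequality $(a+b)^2\le 2(a^2+b^2)$, and summing over $\tau\in\mathcal{T}_h$ would then give
\begin{equation*}
\sum_{\tau\in\mathcal{T}_h}\|R_{1}(\boldsymbol{u}_h,p_h)\|_{0,\tau}^2
\;\lesssim\;
\|p-p_h\|_{0,\Omega}^2 + \|\mathrm{curl}_h(\boldsymbol{u}-\boldsymbol{u}_h)\|_{0,\Omega}^2.
\end{equation*}
Finally, I would recognise that both terms on the right are dominated by $\|(p-p_h,\boldsymbol{u}-\boldsymbol{u}_h)\|_{DG}^2$ by the very definition \eqref{Eqn:DG}, which delivers the claim.

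Because the statement involves only the unweighted $L^2$-residual of the auxiliary equation $p-\mathrm{curl}\,\boldsymbol{u}=0$, there is essentially no technical obstacle: the main (and only) step is recognising that $p=\mathrm{curl}\,\boldsymbol{u}$ holds pointwise in $\Omega$, so that $R_{1}$ splits cleanly into a $p$-part error and a $\mathrm{curl}$-part error. Unlike the forthcoming estimates of $R_2$, $R_3$, $J_1$, $J_2$ (which will require bubble-function testing, \eqref{Eqn:3.23}--\eqref{Eqn:3.34}, and the data-oscillation term $\|\boldsymbol{f}-\boldsymbol{f}_h\|$), this first bound carries no constants depending on $\alpha$ or $\beta$ and no oscillation contribution, which is why it is stated with a direct $\le$ in the lemma.
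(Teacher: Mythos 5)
Your proof is correct and essentially the paper's own argument: the paper obtains the same splitting $R_1(\boldsymbol{u}_h,p_h)=(p_h-p)-\mathrm{curl}_h(\boldsymbol{u}_h-\boldsymbol{u})$ by testing \eqref{Eqn:weak_1} with $q=R_1(\boldsymbol{u}_h,p_h)\in\mathbb{Q}_h\subset\mathbb{Q}$ (i.e., using $p=\mathrm{curl}\,\boldsymbol{u}$ in the $L^2$ sense) instead of the pointwise identity from \eqref{Eqn:hq}, and then applies Cauchy--Schwarz and the definition \eqref{Eqn:DG}. The only caveat is that your step $(a+b)^2\le 2(a^2+b^2)$ delivers the bound with a factor $2$ (a $\lesssim$), which is in fact also all the paper's own Cauchy--Schwarz chain yields, so the constant-one $\leqslant$ displayed in \eqref{eqn:R1} is stated slightly generously in both arguments and is harmless for the efficiency result.
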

 
 \begin{proof}
 	We can observe that $R_{1}\left(\boldsymbol{u}_{h},p_{h} \right)=p_{h}-\mathrm{curl}_h \boldsymbol{u}_{h} \in \mathbb{Q}_h\subset\mathbb{Q}$, and taking $q=R_{1}\left( \boldsymbol{u}_{h},p_{h} \right)$ in \eqref{Eqn:weak_1}, we have
 	\begin{equation}\label{Eqn:qh-curluh}
 	\left(p-\operatorname{curl} \boldsymbol{u}, R_{1}\left(\boldsymbol{u}_{h},p_{h} \right)\right)_{\mathcal{T}_h}=0.
 	\end{equation}
 	Thus, by \eqref{Eqn:qh-curluh} and Cauchy-Schwarz inequality, we obtain
 	\begin{eqnarray*}
 		\sum_{\tau \in \mathcal{T}_h}\|R_{1}\left(\boldsymbol{u}_{h},p_{h} \right)\|_{0,\tau}^{2}
 		&=&
 		\left(p_{h}-\mathrm{curl}_h~\boldsymbol{u}_{h}, R_{1}\left( \boldsymbol{u}_{h},p_{h}\right)\right)_{\mathcal{T}_h}\\
 		&=&
 		\left(p_{h}-p-\mathrm{curl}_h\left(\boldsymbol{u}_{h}-\boldsymbol{u}\right), R_{1}\left(\boldsymbol{u}_{h},p_{h} \right)\right)_{\mathcal{T}_h} \\
 		&\leqslant&
 		\left(\|p_{h}-p\|_{0,\Omega}+\|\mathrm{curl}_h\left(\boldsymbol{u}_{h}-\boldsymbol{u}\right)\|_{0,\Omega}\right)\|R_{1}\left( \boldsymbol{u}_{h},p_{h}\right)\|_{0,\Omega},
 	\end{eqnarray*}
 	which implies \eqref{eqn:R1}.
 \end{proof}
 
 We estimate the second term $R_2$ of the error estimator $\eta(u_h,p_h;\mathcal{T}_h)$.
 \begin{lemma}\label{Lemm:R2}
 	Let $(\boldsymbol{u},p)\in\boldsymbol{U}\times\mathbb{Q}$ and $(\boldsymbol{u}_h,p_h)\in\boldsymbol{U}_h\times\mathbb{Q}_h$ be the solutions of  \eqref{Eqn:weak_1}-\eqref{Eqn:weak_2} and \eqref{Eqn:dweak_1}-\eqref{Eqn:dweak_2}, respectively. Then we have
 	\begin{equation*}
 	\sum_{\tau \in \mathcal{T}_h} h_{\tau}^{2}\|R_{2}\left(\boldsymbol{u}_{h},p_{h} \right)\|_{0,\tau}^{2}
 	\lesssim
 	C_2\Big(
 	\left\| (\boldsymbol{u},p)-(\boldsymbol{u}_{h},p_h)\right\|_{\mathrm{DG}}^{2}
 	+
 	\sum_{\tau \in \mathcal{T}_h} h_{\tau}^{2}\left\|\boldsymbol{f}-\boldsymbol{f}_{h}\right\|_{0,\tau}^{2}
 	\Big),
 	\end{equation*}
 	where $C_2$ is a constant depending on  $\|\alpha\|_{0,\infty}$ and  $\|\beta\|_{0,\infty}$.
 \end{lemma}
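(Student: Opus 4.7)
The plan is to adapt the standard bubble-function technique for the residual $R_2$, with one complication: since $R_2(\boldsymbol{u}_h,p_h) = \boldsymbol{f} - \boldsymbol{curl}_h(\alpha p_h) - \beta\boldsymbol{u}_h$ contains the data $\boldsymbol{f}$ which is not polynomial, Lemma \ref{Lem:3.5} does not directly apply. I would therefore introduce the polynomial approximation $\boldsymbol{f}_h \in \mathcal{S}_h^{\text{conf}}$ and split
$$R_2(\boldsymbol{u}_h,p_h) = R_2^h + (\boldsymbol{f} - \boldsymbol{f}_h), \qquad R_2^h := \boldsymbol{f}_h - \boldsymbol{curl}_h(\alpha p_h) - \beta\boldsymbol{u}_h,$$
so that by the triangle inequality $h_\tau\|R_2\|_{0,\tau} \le h_\tau\|R_2^h\|_{0,\tau} + h_\tau\|\boldsymbol{f}-\boldsymbol{f}_h\|_{0,\tau}$; the second piece is already one of the terms on the right-hand side of the claim, and the task reduces to estimating $R_2^h$ on each $\tau$.

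Next, I would apply \eqref{Eqn:3.30} to the polynomial $R_2^h$ and test against $\boldsymbol{\phi}_\tau := b_\tau R_2^h$, extended by zero outside $\tau$ (so $\boldsymbol{\phi}_\tau \in \boldsymbol{H}_0(\mathrm{curl},\Omega)$ and vanishes on $\partial\tau$). Writing
$$(R_2^h,\boldsymbol{\phi}_\tau)_\tau = (R_2,\boldsymbol{\phi}_\tau)_\tau + (\boldsymbol{f}_h - \boldsymbol{f},\boldsymbol{\phi}_\tau)_\tau,$$
I would invoke the continuous equation $\boldsymbol{f} = \boldsymbol{curl}(\alpha p) + \beta\boldsymbol{u}$ to rewrite the first term as $(\boldsymbol{curl}_h(\alpha(p-p_h)),\boldsymbol{\phi}_\tau)_\tau + (\beta(\boldsymbol{u}-\boldsymbol{u}_h),\boldsymbol{\phi}_\tau)_\tau$. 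Because $\boldsymbol{\phi}_\tau$ vanishes on $\partial\tau$, integration by parts against the $\mathbf{curl}$ leaves no boundary contribution and produces $(\alpha(p-p_h),\mathrm{curl}\,\boldsymbol{\phi}_\tau)_\tau$.

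Third, I would apply Cauchy--Schwarz and the bubble inverse estimates \eqref{Eqn:3.29}, \eqref{Eqn:3.31} of Lemma \ref{Lem:3.5} to obtain
$$\|R_2^h\|_{0,\tau}^2 \lesssim \Bigl(\|\alpha\|_{0,\infty} h_\tau^{-1}\|p-p_h\|_{0,\tau} + \|\beta\|_{0,\infty}\|\boldsymbol{u}-\boldsymbol{u}_h\|_{0,\tau} + \|\boldsymbol{f}-\boldsymbol{f}_h\|_{0,\tau}\Bigr)\|R_2^h\|_{0,\tau},$$
which after cancellation, multiplication by $h_\tau$, squaring, and summing over $\tau\in\mathcal{T}_h$ (absorbing the factor $h_\tau$ in front of $\|\boldsymbol{u}-\boldsymbol{u}_h\|$ using $h_\tau \lesssim \mathrm{diam}(\Omega)$ and recognising $\|p-p_h\|_{0,\Omega}^2 + \|\boldsymbol{u}-\boldsymbol{u}_h\|_{0,\Omega}^2 \le \|(\boldsymbol{u},p)-(\boldsymbol{u}_h,p_h)\|_{DG}^2$) gives the stated bound with $C_2$ depending only on $\|\alpha\|_{0,\infty}$ and $\|\beta\|_{0,\infty}$.

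The main obstacle I expect is conceptual rather than technical: one must resist testing directly with $b_\tau R_2$ (which would leave the nonpolynomial factor $\boldsymbol{f}$ inside the bubble estimates and break the argument) and instead commit to the detour through the polynomial surrogate $R_2^h$, which is precisely what forces the $\|\boldsymbol{f}-\boldsymbol{f}_h\|_{0,\tau}$ oscillation term into the final bound. Apart from this, the remaining work is a routine bookkeeping exercise with Cauchy--Schwarz and the bubble inverse estimates.
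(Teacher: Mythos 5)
Your proposal is correct and follows essentially the same route as the paper: the paper likewise sets $\boldsymbol{w}=\boldsymbol{f}_h-\boldsymbol{curl}_h\,\alpha p_h-\beta\boldsymbol{u}_h$ (your $R_2^h$), tests with $b_\tau\boldsymbol{w}\in\boldsymbol{U}$ using the continuous equation \eqref{Eqn:weak_2}, integrates the curl term by parts (no boundary term since the bubble vanishes on $\partial\tau$), and applies \eqref{Eqn:3.30}--\eqref{Eqn:3.31} with Cauchy--Schwarz before recovering $R_2$ by the triangle inequality with $\|\boldsymbol{f}-\boldsymbol{f}_h\|_{0,\tau}$. The only cosmetic difference is that you invoke the strong form $\boldsymbol{f}=\boldsymbol{curl}(\alpha p)+\beta\boldsymbol{u}$ directly while the paper phrases the same cancellation via the weak form, which changes nothing in substance.
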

 \begin{proof}
 	Let $\boldsymbol{f}_{h} \in \mathcal{S}_{h}^{\mathrm{conf }}$ and let $\boldsymbol{w}:=\boldsymbol{f}_{h}-\boldsymbol{curl}_h~ \alpha p_{h} - \beta\boldsymbol{u}_{h}$. Hence $\boldsymbol{w}$ is also a polynomial on $\tau$. Let $\boldsymbol{w}_{b}=b_{\tau} \boldsymbol{w}$, we can observe that $\boldsymbol{w}_{b} \in \boldsymbol{H}_{0}^{1}(\tau)  \subset \boldsymbol{H}_{0}^{1}(\Omega)  \subset \boldsymbol{U}$. Setting $\boldsymbol{v}=\boldsymbol{w}_{b}$ in \eqref{Eqn:weak_2} and using Green's formula, we get
 	\begin{equation}\label{Eqn:fh-curlaph-uh}
 	\left(\boldsymbol{f} -\boldsymbol{curl}~ \alpha p - \beta\boldsymbol{u}, \boldsymbol{w}_{b}\right)_\tau=0.
 	\end{equation}
 	Then by using \eqref{Eqn:3.30}, the definitons of $\boldsymbol{w}$ and $\boldsymbol{w}_b$, \eqref{Eqn:fh-curlaph-uh} and Cauchy-Schwarz inequality, we have
 	\begin{eqnarray}\label{R2_v}
 	\|\boldsymbol{w}\|_{0,\tau}^{2}
 	&\lesssim&
 	\|b_{\tau}^{1 / 2} \boldsymbol{w}\|_{0,\tau}^{2}
 	=
 	(\boldsymbol{w},b_\tau\boldsymbol{w})_\tau
 	\nonumber\\
 	&=&
 	(\left(\boldsymbol{f}_{h}-\boldsymbol{curl}_h~\alpha p_{h}
 	- \beta\boldsymbol{u}_{h}\right) , \boldsymbol{w}_{b})_\tau \nonumber\\
 	&=&
 	\left(\left(\boldsymbol{f}_{h}-\boldsymbol{f}\right)
 	-
 	\boldsymbol{curl}_h~\alpha\left(p_{h}-p\right)
 	-
 	\beta\left(\boldsymbol{u}_{h}-\boldsymbol{u}\right), \boldsymbol{w}_{b}\right)_\tau  \nonumber\\
 	&\lesssim&
 	\left(\left\|\boldsymbol{f}_{h}-\boldsymbol{f}\right\|_{0,\tau}
 	+
 	\|\beta\|_{0,\infty}\left\|\boldsymbol{u}_{h}-\boldsymbol{u}\right\|_{0,\tau}\right)\|\boldsymbol{w}_b\|_{0,\tau}
 	\nonumber\\
 	&& -
 	\left(\boldsymbol{curl}_h~\alpha\left(p_{h}-p\right)  , \boldsymbol{w}_{b}\right)_\tau.
 	\end{eqnarray}
 	
 	According to the last term of right hand side \eqref{R2_v}, using Green's formula  with the fact $\boldsymbol{w}_b=0$ on $\partial \tau$, Cauchy-Schwarz inequality and \eqref{Eqn:3.31}, we have
 	\begin{eqnarray}\label{Eqn:vl2_curlp}
 	\lefteqn{\left(\boldsymbol{curl}_h~\alpha\left(p_{h}-p\right) , \boldsymbol{w}_{b}\right)_\tau
 		=
 		\left(\alpha (p_{h}-p),\mathrm{curl}_h~ \boldsymbol{w}_{b}\right)_\tau} \nonumber\\
 	&\leqslant&
 	\|\alpha\|_{0,\infty} \left\|p_{h}-p\right\|_{0,\tau}\left\|\mathrm{curl}_h~\boldsymbol{w}_{b}\right\|_{0,\tau}
 	\lesssim
 	h_{\tau}^{-1}\|\alpha\|_{0,\infty}\left\|p_{h}-p\right\|_{0,\tau}\|\boldsymbol{w}\|_{0,\tau}.
 	\end{eqnarray}
 	
 	Using \eqref{R2_v} and \eqref{Eqn:vl2_curlp}, we get
 	\begin{equation}\label{v_0}
 	\|\boldsymbol{w}\|_{0,\tau}
 	\lesssim
 	\left\|\boldsymbol{f}_{h}-\boldsymbol{f}\right\|_{0,\tau}
 	+
 	\|\beta\|_{0,\infty}\left\|\boldsymbol{u}_{h}-\boldsymbol{u}\right\|_{0,\tau}
 	+
 	h_{\tau}^{-1}\|\alpha\|_{0,\infty}\left\|p_{h}-p\right\|_{0,\tau}.
 	\end{equation}
 	
 	By the definition of $R_{2}\left( \boldsymbol{u}_{h},p_{h}\right)$ and \eqref{v_0}, we obtain
 	\begin{eqnarray*}
 		\|R_{2}\left(\boldsymbol{u}_{h},p_{h} \right)\|_{0,\tau}
 		&\leqslant&
 		\|\boldsymbol{w}\|_{0,\tau}
 		+
 		\left\|\boldsymbol{f}_{h}-\boldsymbol{f}\right\|_{0,\tau}
 		\\
 		&\lesssim&
 		C_2\Big(
 		\left\|\boldsymbol{f}_{h}-\boldsymbol{f}\right\|_{0,\tau}
 		+
 		\left\|\boldsymbol{u}_{h}-\boldsymbol{u}\right\|_{0,\tau}+h_{\tau}^{-1}\left\|p_{h}-p\right\|_{0,\tau}
 		\Big),
 	\end{eqnarray*}
 	which implies
 	\begin{eqnarray*}
 		\lefteqn{ \sum_{\tau \in \mathcal{T}_h} h_{\tau}^{2}\|R_{2}\left(\boldsymbol{u}_{h},p_{h} \right)\|_{0,\tau}^{2}}
 		\\
 		&& \lesssim
 		C_2\Big(
 		\sum_{\tau \in \mathcal{T}_h}\left(h_{\tau}^{2}\left\|\boldsymbol{f}_{h}-\boldsymbol{f}\right\|_{0,\tau}^{2}
 		+
 		h_{\tau}^{2}\left\|\boldsymbol{u}_{h}-\boldsymbol{u}\right\|_{0,\tau}^{2}
 		+
 		\left\|p_{h}-p\right\|_{0,\tau}^{2}\right)
 		\Big)
 		\\
 		&& \lesssim
 		C_2\Big(
 		\left\| (\boldsymbol{u},p)-(\boldsymbol{u}_{h},p_h)\right\|_{\mathrm{DG}}^{2}
 		+
 		\sum_{\tau \in \mathcal{T}_h} h_{\tau}^{2}\left\|\boldsymbol{f}_{h}-\boldsymbol{f}\right\|_{0,\tau}^{2}
 		\Big).
 	\end{eqnarray*}
 \end{proof}
 
 In the following, we turn to estimate the third term $R_{3}$ of the error estimator.
 
 \begin{lemma}\label{Lemm:R3}
 	Let $(\boldsymbol{u},p)\in\boldsymbol{U}\times\mathbb{Q}$ and $(\boldsymbol{u}_h,p_h)\in\boldsymbol{U}_h\times\mathbb{Q}_h$ be the solutions of  \eqref{Eqn:weak_1}-\eqref{Eqn:weak_2} and \eqref{Eqn:dweak_1}-\eqref{Eqn:dweak_2}, respectively. Then we have
 	\begin{equation}\label{h2R_3}
 	\sum_{\tau \in \mathcal{T}_h} h_{\tau}^{2}\|R_{3}\left(\boldsymbol{u}_{h}\right)\|_{0,\tau}^{2}
 	\lesssim
 	C_1\Big(
 	\left\|\boldsymbol{u}-\boldsymbol{u}_{h}\right\|_{0,\Omega}^{2}+\left\|\boldsymbol{f}-\boldsymbol{f}_{h}\right\|_{0,\Omega}^{2}+\sum_{\tau \in \mathcal{T}_h} h_{\tau}^{2}\|\nabla \cdot\left(\boldsymbol{f}-\boldsymbol{f}_{h}\right)\|_{0,\tau}^{2}
 	\Big),
 	\end{equation}
 	where $C_1$ is a constant depending on   $\|\beta\|_{0,\infty}$.
 \end{lemma}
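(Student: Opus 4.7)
The plan is to adapt the standard bubble-function technique to $R_3$ by exploiting the fact that the continuous residual is divergence-free, i.e., since $\boldsymbol{f}-\beta\boldsymbol{u}=\boldsymbol{curl}(\alpha p)$, we have $\nabla\cdot(\boldsymbol{f}-\beta\boldsymbol{u})=0$. This identity, combined with a polynomial auxiliary quantity, will give a local efficiency bound of the desired form.

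First I would split $R_3(\boldsymbol{u}_h)=\nabla\cdot(\boldsymbol{f}-\beta\boldsymbol{u}_h)$ on $\tau$ as $\nabla\cdot(\boldsymbol{f}_h-\beta\boldsymbol{u}_h)+\nabla\cdot(\boldsymbol{f}-\boldsymbol{f}_h)$, where $\boldsymbol{f}_h\in\mathcal{S}_h^{\mathrm{conf}}$, so that the second piece appears directly on the right-hand side of \eqref{h2R_3}. Setting $\chi:=\nabla\cdot(\boldsymbol{f}_h-\beta\boldsymbol{u}_h)\in P(\tau)$ (a polynomial, because $\boldsymbol{f}_h$, $\boldsymbol{u}_h$ and $\beta$ are polynomial on $\tau$), I would introduce the bubble extension $\phi_b:=b_\tau\chi\in H_0^1(\tau)\subset H_0^1(\Omega)$. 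By \eqref{Eqn:3.24} we have $\|\chi\|_{0,\tau}^2\lesssim(\chi,b_\tau\chi)_\tau$, and an integration by parts on $\tau$ (legitimate because $\phi_b$ vanishes on $\partial\tau$) converts this to $-(\boldsymbol{f}_h-\beta\boldsymbol{u}_h,\nabla\phi_b)_\tau$.

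The crucial step is to subtract the exact equation. Because $\phi_b\in H_0^1(\Omega)$, we have $\nabla\phi_b\in\boldsymbol{L}^2(\Omega)$ with $\mathrm{curl}(\nabla\phi_b)=0$ and vanishing tangential trace, so $\nabla\phi_b\in\boldsymbol{U}$. Taking $\boldsymbol{v}=\nabla\phi_b$ in \eqref{Eqn:weak_2} gives $d(\nabla\phi_b,p)=0$ and hence $(\boldsymbol{f}-\beta\boldsymbol{u},\nabla\phi_b)_\Omega=0$; since $\phi_b$ is supported in $\tau$, the same identity holds on $\tau$. Subtracting yields
\begin{equation*}
\|\chi\|_{0,\tau}^2\lesssim\bigl((\boldsymbol{f}-\boldsymbol{f}_h)-\beta(\boldsymbol{u}-\boldsymbol{u}_h),\nabla\phi_b\bigr)_\tau.
\end{equation*}
Applying the Cauchy--Schwarz inequality together with the bubble estimate $\|\nabla\phi_b\|_{0,\tau}\lesssim h_\tau^{-1}\|\chi\|_{0,\tau}$ from \eqref{Eqn:3.25} and dividing by $\|\chi\|_{0,\tau}$, I obtain
\begin{equation*}
h_\tau\|\chi\|_{0,\tau}\lesssim\|\boldsymbol{f}-\boldsymbol{f}_h\|_{0,\tau}+\|\beta\|_{0,\infty}\|\boldsymbol{u}-\boldsymbol{u}_h\|_{0,\tau}.
\end{equation*}

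Finally I would square, sum over $\tau\in\mathcal{T}_h$, and invoke the triangle inequality $\|R_3(\boldsymbol{u}_h)\|_{0,\tau}\leq\|\chi\|_{0,\tau}+\|\nabla\cdot(\boldsymbol{f}-\boldsymbol{f}_h)\|_{0,\tau}$ to recover \eqref{h2R_3} with $C_1$ depending only on $\|\beta\|_{0,\infty}$ and mesh shape-regularity. The main conceptual obstacle is the verification that the test function $\nabla\phi_b$ lies in $\boldsymbol{U}$ so as to legally enforce the divergence-free identity $(\boldsymbol{f}-\beta\boldsymbol{u},\nabla\phi_b)_\tau=0$; once this is in place, the rest reduces to the standard interior bubble machinery of Lemma \ref{Lem:3.4}.
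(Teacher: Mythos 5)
Your proposal is correct and is essentially the paper's own argument: the same residual splitting via $\chi=\nabla\cdot(\boldsymbol{f}_h-\beta\boldsymbol{u}_h)$, the same interior bubble $b_\tau\chi$, the same use of $\boldsymbol{v}=\nabla(b_\tau\chi)\in\boldsymbol{U}$ in \eqref{Eqn:weak_2} to obtain $(\boldsymbol{f}-\beta\boldsymbol{u},\nabla(b_\tau\chi))_\tau=0$, followed by \eqref{Eqn:3.24}, integration by parts, Cauchy--Schwarz with the inverse/bubble estimate, and the final triangle inequality. Your explicit verification that $\nabla(b_\tau\chi)$ lies in $\boldsymbol{U}$ (curl-free, vanishing tangential trace) is a point the paper leaves implicit, but otherwise the two proofs coincide.
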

 \begin{proof}
 	Let $\boldsymbol{f}_{h} \in \mathcal{S}_{h}^{\text {conf }}$, $\chi:=\nabla \cdot\left(\boldsymbol{f}_{h}-\beta \boldsymbol{u}_{h}\right)$ and $\chi_{b}=b_{\tau} \chi$, where $\chi$ is a polynomial on each $\tau \in \mathcal{T}$ and $\chi_{b} \in H_{0}^{1}(\tau) \subset H_{0}^{1}(\Omega)$. Setting $\boldsymbol{v}=\nabla \chi_{b} \in \boldsymbol{U}$ in \eqref{Eqn:weak_2} and with the fact $\chi_b|_{\partial\tau}=0$, we get
 	\begin{equation}\label{Eqn:f-beta}
 	\left(\boldsymbol{f} -\beta \boldsymbol{u}, \nabla \chi_{b}\right)_{\tau}=0.
 	\end{equation}
 	Then, using \eqref{Eqn:3.24}, the Green's formula and \eqref{Eqn:f-beta}, we obtain
 	\begin{eqnarray*}
 		\|\chi\|_{0,\tau}^{2}
 		&\lesssim&
 		\|b_{\tau}^{1 / 2} \chi\|_{0,\tau}^{2}
 		=
 		(\chi,b_\tau\chi)
 		=
 		(\nabla \cdot\left(\boldsymbol{f}_{h}-\beta \boldsymbol{u}_{h}\right) ,\chi_{b})_\tau\\
 		&=&
 		-(\left(\boldsymbol{f}_{h} -\beta \boldsymbol{u}_{h}\right), \nabla \chi_{b})_\tau
 		=
 		\left(\left(\boldsymbol{f}-\boldsymbol{f}_{h}\right)-\beta\left(\boldsymbol{u}-\boldsymbol{u}_{h}\right) ,\nabla \chi_{b}\right)_\tau.
 	\end{eqnarray*}
 	According to the inverse inequalities, there holds
 	\begin{eqnarray*}
 		\|\chi\|_{0,\tau}^{2}
 		&\leqslant&
 		\left(\left\|\boldsymbol{f}_{h}-\boldsymbol{f}\right\|_{0,\tau}
 		+
 		\|\beta\|_{0,\infty}\left\|\boldsymbol{u}_{h}-\boldsymbol{u}\right\|_{0,\tau}\right)\left\|\nabla \chi_{b}\right\|_{0,\tau}\\
 		&\lesssim&
 		C_1h_{\tau}^{-1}\left(\left\|\boldsymbol{f}_{h}-\boldsymbol{f}\right\|_{0,\tau}
 		+
 		\left\|\boldsymbol{u}_{h}-\boldsymbol{u}\right\|_{0,\tau}\right)\|\chi\|_{0,\tau},
 	\end{eqnarray*}
 	which imples
 	\begin{equation}\label{h_tau}
 	h_\tau\|\chi\|\lesssim C_1\left(\left\|\boldsymbol{f}_{h}-\boldsymbol{f}\right\|_{0,\tau}
 	+
 	\left\|\boldsymbol{u}_{h}-\boldsymbol{u}\right\|_{0,\tau}\right).
 	\end{equation}
 	Then combing the definition of $R_3(\boldsymbol{u}_h)$, triangle inequality and \eqref{h_tau}, we get
 	\begin{eqnarray*}
 		h_\tau\|R_{3}\left(\boldsymbol{u}_{h}\right)\|_{0,\tau}
 		&\leqslant&
 		h_\tau\|\chi\|_{0,\tau}
 		+
 		h_\tau\|\nabla \cdot\left(\boldsymbol{f}-\boldsymbol{f}_{h}\right)\|_{0,\tau} \\
 		&\lesssim&
 		C_1\left(\|\boldsymbol{f}_{h}-\boldsymbol{f}\|_{0,\tau}+\|\boldsymbol{u}_{h}-\boldsymbol{u}\|_{0,\tau}\right)
 		+
 		h_\tau\|\nabla \cdot\left(\boldsymbol{f}-\boldsymbol{f}_{h}\right)\|_{0,\tau}.
 	\end{eqnarray*}
 	Summing over all elements $\tau \in \mathcal{T}$, the result \eqref{h2R_3} follows directly.
 \end{proof}
 
 Next, we turn to bound the fourth term $J_{1}$ of the error estimator.
 
 \begin{lemma}\label{Lemm:J1}
 	Let $(\boldsymbol{u},p)\in\boldsymbol{U}\times\mathbb{Q}$ and $(\boldsymbol{u}_h,p_h)\in\boldsymbol{U}_h\times\mathbb{Q}_h$ be the solutions of  \eqref{Eqn:weak_1}-\eqref{Eqn:weak_2} and \eqref{Eqn:dweak_1}-\eqref{Eqn:dweak_2}, respectively. Then we have
 	\begin{eqnarray*}
 		\lefteqn{ \sum_{e \in \mathcal{E}_h} h_{e}\|J_{1}\left(p_{h}\right)\|_{0,e}^{2}}
 		\\
 		&&\lesssim
 		C_2\Big(
 		\sum_{\tau \in \mathcal{T}_h} \|R_{1}\left(\boldsymbol{u}_{h},p_{h} \right)\|_{0,\tau}^{2}
 		+
 		\sum_{\tau \in \mathcal{T}_h} h_{e}^{2}\|R_{2}\left(\boldsymbol{u}_{h},p_{h} \right)\|_{0,\tau}^{2}
 		+
 		\left\| (\boldsymbol{u},p)-(\boldsymbol{u}_{h},p_h)\right\|_{\mathrm{DG}}^{2}
 		\Big),
 	\end{eqnarray*}
 	where $C_2$ is a constant depending on  $\|\alpha\|_{0,\infty}$ and  $\|\beta\|_{0,\infty}$.
 \end{lemma}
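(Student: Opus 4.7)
The plan is a standard edge-bubble argument localized on each edge $e\in\mathcal{E}_h$. I fix $e$ shared by $\tau_1,\tau_2$, set $\boldsymbol{J}:=J_1(p_h)|_e=[[\alpha p_h]]_e$, and invoke Lemma~\ref{Lem:3.5} to obtain an extension $\tilde{\boldsymbol{J}}_b\in (H_0^1(\omega_e))^2\subset\boldsymbol{U}$ with $\tilde{\boldsymbol{J}}_b|_e=b_e\boldsymbol{J}$, satisfying the bubble estimates $\|\tilde{\boldsymbol{J}}_b\|_{0,\tau}\lesssim h_e^{1/2}\|\boldsymbol{J}\|_{0,e}$ and $\|\mathrm{curl}\,\tilde{\boldsymbol{J}}_b\|_{0,\tau}\lesssim h_e^{-1/2}\|\boldsymbol{J}\|_{0,e}$ for $\tau\in\omega_e$. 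Using element-wise Green's formula with $\boldsymbol{v}=\tilde{\boldsymbol{J}}_b$ and $\phi=\alpha p_h$, and noting that $\tilde{\boldsymbol{J}}_b$ is continuous across $e$ and vanishes on $\partial\omega_e\setminus e$, the boundary contributions collapse to $\int_e\tilde{\boldsymbol{J}}_b\cdot[[\alpha p_h]]_e\,ds=\|b_e^{1/2}\boldsymbol{J}\|_{0,e}^2$. Combined with \eqref{Eqn:3.32}, this yields the key identity
\begin{equation*}
\|\boldsymbol{J}\|_{0,e}^2\lesssim (\alpha p_h,\mathrm{curl}_h\tilde{\boldsymbol{J}}_b)_{\omega_e}-(\tilde{\boldsymbol{J}}_b,\boldsymbol{curl}_h(\alpha p_h))_{\omega_e}.
\end{equation*}

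Next I would eliminate the discrete quantities in favour of the errors and the volume residuals. Testing the continuous equation \eqref{Eqn:weak_2} with $\boldsymbol{v}=\tilde{\boldsymbol{J}}_b\in\boldsymbol{U}$ gives $(\mathrm{curl}\,\tilde{\boldsymbol{J}}_b,\alpha p)_{\omega_e}=(\boldsymbol{f}-\beta\boldsymbol{u},\tilde{\boldsymbol{J}}_b)_{\omega_e}$, while the definition of $R_2$ gives $\boldsymbol{curl}_h(\alpha p_h)=\boldsymbol{f}-\beta\boldsymbol{u}_h-R_2(\boldsymbol{u}_h,p_h)$ elementwise. Plugging both into the identity above and using $p_h-p=R_1(\boldsymbol{u}_h,p_h)+\mathrm{curl}_h(\boldsymbol{u}_h-\boldsymbol{u})$ (which follows from $p=\mathrm{curl}\,\boldsymbol{u}$ and the definition of $R_1$), the right-hand side becomes
\begin{equation*}
(\alpha R_1+\alpha\mathrm{curl}_h(\boldsymbol{u}_h-\boldsymbol{u}),\mathrm{curl}_h\tilde{\boldsymbol{J}}_b)_{\omega_e}+(\tilde{\boldsymbol{J}}_b,\beta(\boldsymbol{u}_h-\boldsymbol{u})+R_2)_{\omega_e}.
\end{equation*}
Cauchy--Schwarz and the bubble bounds above, followed by division by $\|\boldsymbol{J}\|_{0,e}$, produce
\begin{equation*}
\|\boldsymbol{J}\|_{0,e}\lesssim h_e^{-1/2}\bigl(\|R_1\|_{0,\omega_e}+\|\mathrm{curl}_h(\boldsymbol{u}-\boldsymbol{u}_h)\|_{0,\omega_e}\bigr)+h_e^{1/2}\bigl(\|\boldsymbol{u}-\boldsymbol{u}_h\|_{0,\omega_e}+\|R_2\|_{0,\omega_e}\bigr).
\end{equation*}
Multiplying by $h_e^{1/2}$, squaring, and summing over $e\in\mathcal{E}_h$, combined with the shape-regularity relation $h_\tau\approx h_e$ and the finite overlap of the patches $\{\omega_e\}$, then delivers the claimed bound.

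The principal bookkeeping obstacle is this dual use of the residuals: $R_2$ is inserted to eliminate $\boldsymbol{curl}_h(\alpha p_h)$ (paired against $\tilde{\boldsymbol{J}}_b$, whose $L^2$-bubble bound carries a factor $h_e^{1/2}$, hence contributing $h_e^2\|R_2\|^2$ after squaring and multiplying by $h_e$), while $R_1$ is inserted to decompose $\alpha(p_h-p)$ (paired against $\mathrm{curl}_h\tilde{\boldsymbol{J}}_b$, whose bubble bound carries the inverse factor $h_e^{-1/2}$, contributing the unweighted $\|R_1\|^2$). Only after both substitutions do the terms involving the exact quantities $p$ and $\boldsymbol{curl}(\alpha p)$ cancel simultaneously, leaving pieces controlled either by the DG-norm error or by the residuals $R_1,R_2$, matching the form of the claim. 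Boundary edges are handled by the same construction since $\tilde{\boldsymbol{J}}_b$ vanishes on $\partial\Omega$.
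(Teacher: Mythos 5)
Your argument is essentially the paper's: the same edge-bubble extension from Lemma \ref{Lem:3.5}, the same Cauchy--Schwarz step pairing $R_1$ and $\mathrm{curl}_h(\boldsymbol{u}-\boldsymbol{u}_h)$ against $\mathrm{curl}\,\tilde{\boldsymbol{J}}_b$ (factor $h_e^{-1/2}$) and $R_2$ and $\beta(\boldsymbol{u}-\boldsymbol{u}_h)$ against $\tilde{\boldsymbol{J}}_b$ (factor $h_e^{1/2}$); the only cosmetic difference is that you eliminate the exact solution by testing the weak equation \eqref{Eqn:weak_2} with $\tilde{\boldsymbol{J}}_b$, whereas the paper inserts $[[\alpha\,\mathrm{curl}\,\boldsymbol{u}]]_e=0$ and uses the strong equation elementwise, which amounts to the same computation. (Your closing aside about boundary edges is not quite right, since a bubble supported on a boundary edge would have nonzero tangential trace and so leave $\boldsymbol{U}$, but the paper likewise treats only interior edges, so this does not change the comparison.)
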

 \begin{proof}
 	Without loss of generality, let $e\in  \mathcal{E}_h$ be the common edge of $\tau_1\in  \mathcal{T}_h$ and $\tau_2\in  \mathcal{T}_h$, we give the following definition
 	\begin{equation}\label{Eqn:3-3-e}
 	\boldsymbol{w}_{h} = [[\alpha{p}_{h}]]_{e},
 	\quad \tilde{\boldsymbol{w}}_{b}=b_e \boldsymbol{w}_{h} \in H^1_0(\tau_{1}\cup\tau_2).
 	\end{equation}
 	Applying \eqref{Eqn:3-3-e}, $[[\alpha\mathrm{curl}~\boldsymbol{u}]]_{e}=\boldsymbol{0}$, Green's fromula, \eqref{Eqn:weak_1} and \eqref{Eqn:weak_2}, we get
 	\begin{eqnarray}
 	\nonumber
 	\lefteqn{\|b_e^{\frac{1}{2}} \boldsymbol{w}_h\|^2_{0, e} =\left\langle \boldsymbol{w}_h, b_e\boldsymbol{w}_h\right\rangle_e }\\ \nonumber
 	&=&
 	\left\langle [[ \alpha{p}_{h}]]_e, \tilde{\boldsymbol{w}}_{b}\right\rangle_e  \\  \nonumber
 	&=&
 	\left\langle [[  \alpha (p_{h}-\mathrm{curl}~\boldsymbol{u})  ]]_{e}, \tilde{\boldsymbol{w}}_{b}\right\rangle_e  \\ \nonumber
 	&=&
 	\sum\limits_{\tau\in \omega_e}\left[\left(
 	\boldsymbol{curl}\left(
 	\alpha (p_{h}-\mathrm{curl}~\boldsymbol{u})
 	\right), \tilde{\boldsymbol{w}}_{b}
 	\right)_\tau
 	-
 	\left(
 	\alpha (p_{h}-\mathrm{curl}~\boldsymbol{u}), \mathrm{curl}~\tilde{\boldsymbol{w}}_{b}
 	\right)_\tau\right]   \\  \nonumber
 	&=&
 	\sum\limits_{\tau\in \omega_e}\left[\left( \boldsymbol{curl}~
 	\alpha p_{h}-\boldsymbol{curl}~\alpha\mathrm{curl}~\boldsymbol{u}
 	+
 	\boldsymbol{f}-\boldsymbol{f}
 	+
 	\beta\boldsymbol{u}-\beta\boldsymbol{u}
 	+
 	\beta\boldsymbol{u}_h-\beta\boldsymbol{u}_h, \tilde{\boldsymbol{w}}_{b}\right)_\tau \right.   \\  \nonumber
 	&&
 	\left. -\left(\alpha (p_{h}-\mathrm{curl}~\boldsymbol{u}), \mathrm{curl}~\tilde{\boldsymbol{w}}_{b}\right)_\tau\right]
 	\\  \nonumber
 	&=&
 	\sum\limits_{\tau\in \omega_e}\left[ \left(
 	\boldsymbol{curl}~\alpha p_h + \beta\boldsymbol{u}_h	- \boldsymbol{f}, \tilde{\boldsymbol{w}}_{b}\right)_\tau
 	+
 	\left(\beta\boldsymbol{u}-\beta\boldsymbol{u}_h, \tilde{\boldsymbol{w}}_{b}\right)_\tau \right.   \\  \label{Eqn:3.3.13h}
 	&&
 	\left. -\left(\alpha (p_{h}-\mathrm{curl}~\boldsymbol{u}), \mathrm{curl} ~\tilde{\boldsymbol{w}}_{b}\right)_\tau\right].
 	\end{eqnarray}
 	
 	Using \eqref{Eqn:3.32}, \eqref{Eqn:3.3.13h}, Cauchy-Schwarz inequality, \eqref{Eqn:3.33} and \eqref{Eqn:3.34}, there holds
 	\begin{eqnarray*}
 		\lefteqn{\|\boldsymbol{w}_h\|^2_{0, e}\lesssim\|b_e^{\frac{1}{2}} \boldsymbol{w}_h\|^2_{0, e}
 		}
 		\\
 		&\lesssim&
 		\sum\limits_{\tau\in \omega_e}\left[ \left(
 		\boldsymbol{curl}~\alpha p_{h} + \beta\boldsymbol{u}_h - \boldsymbol{f}, \tilde{\boldsymbol{w}}_{b}\right)_\tau
 		+
 		\left(\beta\boldsymbol{u}-\beta\boldsymbol{u}_h, \tilde{\boldsymbol{w}}_{b}\right)_\tau \right.   \\
 		&&
 		\left. -\left(\alpha (p_{h}-\mathrm{curl}~\boldsymbol{u}), \mathrm{curl}~ \tilde{\boldsymbol{w}}_{b}\right)_\tau\right]
 		\\
 		&\lesssim&
 		\sum\limits_{\tau\in \omega_e}\left( \left\|\boldsymbol{curl}~\alpha p_{h} + \beta\boldsymbol{u}_h - \boldsymbol{f}\right\|_{0, \tau}\cdot\left\|\tilde{\boldsymbol{w}}_{b}\right\|_{0, \tau}
 		+
 		\|\beta\|_{0,\infty}\left\|\boldsymbol{u}-\boldsymbol{u}_h\right\|_{0, \tau}\cdot\left\|\tilde{\boldsymbol{w}}_{b}\right\|_{0, \tau} \right. \\
 		&&
 		\left. +\|\alpha\|_{0,\infty}\left\|p_{h}-\mathrm{curl}~\boldsymbol{u}\right\|_{0, \tau}\cdot\left\|\mathrm{curl}~ \tilde{\boldsymbol{w}}_{b}\right\|_{0, \tau}\right)
 		\\
 		&\lesssim&
 		C_2\sum\limits_{\tau\in \omega_e}\left( \left\|\boldsymbol{curl}~\alpha p_{h} + \beta\boldsymbol{u}_h - \boldsymbol{f}\right\|_{0, \tau}\cdot h^{\frac{1}{2}}_e\left\|\boldsymbol{w}_{h}\right\|_{0, e} \right. \\
 		&&
 		\left. +\left\|\boldsymbol{u}-\boldsymbol{u}_h\right\|_{0, \tau}\cdot h^{\frac{1}{2}}_e\left\|\boldsymbol{w}_{h}\right\|_{0, e}
 		+
 		\left\|p_{h}-\mathrm{curl}~\boldsymbol{u}\right\|_{0, \tau}\cdot h^{-\frac{1}{2}}_e\left\|\boldsymbol{w}_{h}\right\|_{0, e}\right)
 		\\
 		&\lesssim&
 		C_2\sum\limits_{\tau\in \omega_e}\left(
 		h^{\frac{1}{2}}_e \left\|\boldsymbol{curl}~\alpha p_{h} + \beta\boldsymbol{u}_h - \boldsymbol{f}\right\|_{0, \tau}+h^{\frac{1}{2}}_e \left\|\boldsymbol{u}-\boldsymbol{u}_h\right\|_{0, \tau}  \right. \\
 		&&
 		\left.  +h^{-\frac{1}{2}}_e\left\|p_{h}-\mathrm{curl}_h~\boldsymbol{u}_h\right\|_{0, \tau}
 		+h^{-\frac{1}{2}}_e\left\|\mathrm{curl}~\boldsymbol{u}-\mathrm{curl}_h~\boldsymbol{u}_h\right\|_{0, \tau}
 		\right)\cdot \left\|\boldsymbol{w}_{h}\right\|_{0, e},
 	\end{eqnarray*}
 	which imples
 	\begin{eqnarray}
 	\nonumber
 	\left\|\boldsymbol{w}_h\right\|_{0, e}
 	&\lesssim&
 	C_2\sum\limits_{\tau\in \omega_e}\left( h^{\frac{1}{2}}_e \left\|\boldsymbol{f}-\boldsymbol{curl}~\alpha p_h-\beta\boldsymbol{u}_h\right\|_{0, \tau}
 	+
 	h^{\frac{1}{2}}_e \left\|\boldsymbol{u}-\boldsymbol{u}_h\right\|_{0, \tau} \right.  \\   \label{Eqn:3.41}
 	&&
 	\left. +h^{-\frac{1}{2}}_e\left\|p_{h}-\mathrm{curl}_h~\boldsymbol{u}_h\right\|_{0, \tau}
 	+h^{-\frac{1}{2}}_e\left\|\mathrm{curl}~\boldsymbol{u}-\mathrm{curl}_h~\boldsymbol{u}_h\right\|_{0, \tau}\right).
 	\end{eqnarray}

 	Applying \eqref{Eqn:3-3-e} and \eqref{Eqn:3.41} results in
 	\begin{eqnarray*}
 		\lefteqn{h_e\|J_1( p_h)\|^2_{0, e}
 			=
 			h_e\|[[\alpha p_h]]_\tau\|^2_{0, e}
 			=
 			h_e\left\|\boldsymbol{w}_h\right\|^2_{0, e}
 		}
 		\\
 		&& \lesssim
 		C_2\Big(
 		\sum\limits_{\tau\in \omega_e}\left(
 		\| R_1(\boldsymbol{u}_h,p_h) \|_{0, \tau}
 		+
 		h_e^2\| R_2(\boldsymbol{u}_h,p_h) \|_{0, \tau} \right) + \|\left(p-p_{h}, \boldsymbol{u}-\boldsymbol{u}_{h}\right)\|_{\mathrm{DG}}^{2}
 		\Big).
 	\end{eqnarray*}
 \end{proof}
 
 At last, we turn to bound the fifth term $J_{2}$ of the error estimator.
 
 \begin{lemma}\label{lem:J2}
 	Let $(\boldsymbol{u},p)\in\boldsymbol{U}\times\mathbb{Q}$ and $(\boldsymbol{u}_h,p_h)\in\boldsymbol{U}_h\times\mathbb{Q}_h$ be the solutions of  \eqref{Eqn:weak_1}-\eqref{Eqn:weak_2} and \eqref{Eqn:dweak_1}-\eqref{Eqn:dweak_2}, respectively. Then we have
 	\begin{equation*}
 	\sum_{e \in \mathcal{E}_h} h_{e}\|J_{2}\left(\boldsymbol{u}_{h}\right)\|_{0,e}^{2}
 	\lesssim
 	C_2\Big(
 	\|\boldsymbol{u}-\boldsymbol{u}_{h}\|_{0,\Omega}^{2}
 	+
 	\|\boldsymbol{f}-\boldsymbol{f}_{h}\|_{0,\Omega}^{2}
 	+
 	\sum_{\tau \in \mathcal{T}_h} h_{\tau}^{2}
 	\|\nabla \cdot\left(\boldsymbol{f}-\boldsymbol{f}_{h}\right)\|_{0,\Omega}^{2}
 	\Big),
 	\end{equation*}
 	where $C_1$ is a constant depending on   $\|\beta\|_{0,\infty}$.
 \end{lemma}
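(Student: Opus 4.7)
The plan is to emulate the bubble-function argument used for $J_1$ in Lemma \ref{Lemm:J1}, but tailored to the normal-jump quantity $J_2(\boldsymbol{u}_h)|_e = [\boldsymbol{f}-\beta\boldsymbol{u}_h]_e$ (which is scalar), using scalar edge bubbles and gradient test functions. For an interior edge $e=\partial\tau_1\cap\partial\tau_2$, set $\phi_h := [\boldsymbol{f}-\beta\boldsymbol{u}_h]_e$. The crucial preliminary observation is that $\boldsymbol{f}\in\boldsymbol{H}(\mathrm{div};\Omega)$ and $\boldsymbol{f}_h\in\mathcal{S}_h^{\mathrm{conf}}\subset\boldsymbol{H}(\mathrm{div};\Omega)$ force $[\boldsymbol{f}]_e=[\boldsymbol{f}_h]_e=0$, so $\phi_h=[\boldsymbol{f}_h-\beta\boldsymbol{u}_h]_e$ is a genuine polynomial on $e$. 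The scalar bubble extension $\tilde\phi_b\in H_0^1((\bar\tau_1\cup\bar\tau_2)^\circ)$ with $\tilde\phi_b|_e = b_e\phi_h$ is then well defined, and $\nabla\tilde\phi_b$, extended by zero, lies in $\boldsymbol{U}$.

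The continuous orthogonality is produced by testing \eqref{Eqn:weak_2} with $\boldsymbol{v}=\nabla\tilde\phi_b$: since $\mathrm{curl}(\nabla\tilde\phi_b)\equiv 0$, the term $d(\boldsymbol{v},p)$ drops and one obtains $(\boldsymbol{f}-\beta\boldsymbol{u},\nabla\tilde\phi_b)_{\omega_e}=0$. Combining \eqref{Eqn:3.26} with element-wise Green's formula on $\omega_e$ (boundary contributions on $\partial\omega_e\setminus e$ vanish because $\tilde\phi_b\equiv 0$ there) gives
\begin{align*}
\|\phi_h\|_{0,e}^2 \lesssim \langle [\boldsymbol{f}_h-\beta\boldsymbol{u}_h]_e,\tilde\phi_b\rangle_e = \sum_{\tau\in\omega_e}\Bigl[(\nabla\cdot(\boldsymbol{f}_h-\beta\boldsymbol{u}_h),\tilde\phi_b)_\tau+(\boldsymbol{f}_h-\beta\boldsymbol{u}_h,\nabla\tilde\phi_b)_\tau\Bigr].
\end{align*}
Rewriting $\nabla\cdot(\boldsymbol{f}_h-\beta\boldsymbol{u}_h) = -\nabla\cdot(\boldsymbol{f}-\boldsymbol{f}_h)+R_3(\boldsymbol{u}_h)$ and splitting $\boldsymbol{f}_h-\beta\boldsymbol{u}_h = (\boldsymbol{f}_h-\boldsymbol{f})+(\boldsymbol{f}-\beta\boldsymbol{u})+\beta(\boldsymbol{u}-\boldsymbol{u}_h)$, the middle slice of the second inner product is annihilated by the orthogonality, leaving only data-oscillation terms, the element residual $R_3(\boldsymbol{u}_h)$, and the true error $\boldsymbol{u}-\boldsymbol{u}_h$.

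Applying Cauchy--Schwarz together with \eqref{Eqn:3.27}--\eqref{Eqn:3.28} and dividing by $\|\phi_h\|_{0,e}$ yields the edge-wise bound $\|\phi_h\|_{0,e}\lesssim \sum_{\tau\in\omega_e}\bigl(h_e^{1/2}\|\nabla\cdot(\boldsymbol{f}-\boldsymbol{f}_h)\|_{0,\tau} + h_e^{1/2}\|R_3(\boldsymbol{u}_h)\|_{0,\tau} + h_e^{-1/2}\|\boldsymbol{f}-\boldsymbol{f}_h\|_{0,\tau} + h_e^{-1/2}\|\boldsymbol{u}-\boldsymbol{u}_h\|_{0,\tau}\bigr)$, with constants involving $\|\beta\|_{0,\infty}$. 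Multiplying by $h_e^{1/2}$, squaring, summing over $e\in\mathcal{E}_h$ using shape regularity (so each element appears in a bounded number of patches $\omega_e$), and finally invoking Lemma \ref{Lemm:R3} to absorb $\sum_\tau h_\tau^2\|R_3(\boldsymbol{u}_h)\|_{0,\tau}^2$ into the data oscillation plus $\|\boldsymbol{u}-\boldsymbol{u}_h\|_{0,\Omega}^2$ gives the asserted inequality. Boundary edges are treated identically with $\omega_e$ replaced by the single adjacent element.

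The main obstacle lies not in any one estimate but in the simultaneous bookkeeping: one must (i) isolate $R_3(\boldsymbol{u}_h)$ cleanly so that Lemma \ref{Lemm:R3} can absorb it, (ii) keep the two data-oscillation pieces $\|\boldsymbol{f}-\boldsymbol{f}_h\|_{0,\Omega}$ and $\sum_\tau h_\tau^2\|\nabla\cdot(\boldsymbol{f}-\boldsymbol{f}_h)\|_{0,\tau}^2$ strictly separated from the discretization error, and (iii) exploit the $\boldsymbol{H}(\mathrm{div})$-conformity of $\boldsymbol{f}_h$ to eliminate any uncontrolled jump of $\boldsymbol{f}_h$. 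This third point is precisely where the hypothesis $\boldsymbol{f}_h\in\mathcal{S}_h^{\mathrm{conf}}$ becomes indispensable, consistent with the remark following Theorem \ref{The:eff}.
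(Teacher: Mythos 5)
Your proposal is correct and follows essentially the same route as the paper's proof: the scalar edge bubble $\tilde\phi_b$ with $\nabla\tilde\phi_b\in\boldsymbol{U}$ tested in \eqref{Eqn:weak_2}, element-wise Green's formula, the splitting into $R_3(\boldsymbol{u}_h)$, the oscillation terms $\boldsymbol{f}-\boldsymbol{f}_h$ and $\nabla\cdot(\boldsymbol{f}-\boldsymbol{f}_h)$, and $\boldsymbol{u}-\boldsymbol{u}_h$, the bubble estimates \eqref{Eqn:3.26}--\eqref{Eqn:3.28}, the $\boldsymbol{H}(\operatorname{div})$-conformity giving $[\boldsymbol{f}-\boldsymbol{f}_h]_e=0$, and the final absorption of $\sum_\tau h_\tau^2\|R_3(\boldsymbol{u}_h)\|_{0,\tau}^2$ via Lemma \ref{Lemm:R3}. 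The only cosmetic difference is that you identify $[\boldsymbol{f}-\beta\boldsymbol{u}_h]_e$ with $[\boldsymbol{f}_h-\beta\boldsymbol{u}_h]_e$ at the outset, whereas the paper works with $\phi=[\boldsymbol{f}_h-\beta\boldsymbol{u}_h]_e$ and invokes $[\boldsymbol{f}-\boldsymbol{f}_h]_e=0$ at the end; the substance is identical.
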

 \begin{proof}
 	Without loss of generality, let $e \in \mathcal{E}_h$ be the common edge of $\tau_{1}$ and $\tau_{2}$. Setting  $\phi=[(\boldsymbol{f}_{h}-\beta \boldsymbol{u}_{h})]_e$ and $\tilde{\phi}_{b}=b_{e} \phi$, then according to \eqref{Eqn:3.26}, there holds
 	\begin{eqnarray}\label{f_h-betauh}
 	\|\phi\|_{0,e}^{2}
 	&\lesssim&
 	\|b_{e}^{1 / 2} \phi\|_{0,e}^{2}
 	=
 	(\phi,b_e\phi)
 	=
 	\langle [ \left(\boldsymbol{f}_{h}-\beta \boldsymbol{u}_{h}\right)]_e, \tilde{\phi}_{b} \rangle_e \nonumber \\
 	&=&
 	\sum_{i=1,2}\left( ( \nabla \cdot\left(\boldsymbol{f}_{h}-\beta \boldsymbol{u}_{h}\right), \tilde{\phi}_{b})_{\tau_{i}}
 	+
 	(\boldsymbol{f}_{h}-\beta  \boldsymbol{u}_{h}, \nabla \tilde{\phi}_{b})_{\tau_{i}}  \right),
 	\end{eqnarray}
 	where we have used the fact that $\tilde{\phi}_{b} \in H_{0}^{1}\left(\tau_{1} \cup \tau_{2}\right)$.
 	Applying  the definition of $R_{3}\left(\boldsymbol{u}_{h}\right)$ and \eqref{Eqn:3.27} to the first term $( \nabla \cdot\left(\boldsymbol{f}_{h}-\beta \boldsymbol{u}_{h}\right), \tilde{\phi}_{b})_{\tau_{i}}$ of \eqref{f_h-betauh} gives
 	\begin{eqnarray}
 	(\nabla \cdot\left(\boldsymbol{f}_{h}-\beta \boldsymbol{u}_{h}\right), \tilde{\phi}_{b})_{\tau_{i}}
 	&=&
 	(R_{3}\left(\boldsymbol{u}_{h}\right), \tilde{\phi}_{b})_{\tau_{i}}
 	+
 	(\nabla \cdot\left(\boldsymbol{f}_{h}-\boldsymbol{f}\right), \tilde{\phi}_{b} )_{\tau_{i}}
 	\nonumber\\
 	&\leqslant&
 	\left(\|R_{3}\left(\boldsymbol{u}_{h}\right)\|_{0,\tau_{i}}+\|\nabla \cdot\left(\boldsymbol{f}-\boldsymbol{f}_{h}\right)\|_{0,\tau_{i}}\right) \cdot\|\tilde{\phi}_{b}\|_{0,\tau_{i}}
 	\nonumber\\
 	&\lesssim& h_{e}^{\frac{1}{2}}\left(\|R_{3}\left(\boldsymbol{u}_{h}\right)\|_{0,\tau_{i}}
 	+
 	\|\nabla \cdot\left(\boldsymbol{f}-\boldsymbol{f}_{h}\right)\|_{0,\tau_{i}}\right) \cdot\|\phi\|_{0,e}.
 	\end{eqnarray}
 	
 	Using the Cauchy-Schwarz inequality and \eqref{Eqn:3.28} to the second term of  \eqref{f_h-betauh} gives
 	\begin{eqnarray}
 	\lefteqn{\sum_{i=1,2} \left(\boldsymbol{f}_{h}-\beta \boldsymbol{u}_{h} , \nabla \tilde{\phi}_{b} \right)_{\tau_{i}}
 		=
 		\sum_{i=1,2} \left(\boldsymbol{f}_{h}-\boldsymbol{f}-\beta\left(\boldsymbol{u}_{h}-\boldsymbol{u}\right) , \nabla \tilde{\phi}_{b}\right)_{\tau_{i}}} \nonumber\\
 	&\lesssim& C_1\sum_{i=1,2}\left(\left\|\boldsymbol{f}-\boldsymbol{f}_{h}\right\|_{0,\tau_{i}}+\left\|\boldsymbol{u}-\boldsymbol{u}_{h}\right\|_{0,\tau_{i}}\right)\cdot\|\nabla \tilde{\phi}_{b}\|_{0,\tau_{i}}\nonumber\\
 	&\lesssim&
 	C_1\sum_{i=1,2} h_{e}^{-1 / 2}\left(\left\|\boldsymbol{f}-\boldsymbol{f}_{h}\right\|_{0,\tau_{i}}+\left\|\boldsymbol{u}-\boldsymbol{u}_{h}\right\|_{0,\tau_{i}}\right) \cdot\|\phi\|_{0,e},
 	\end{eqnarray}
 	where we have used $	\left(\boldsymbol{f}-\beta \boldsymbol{u}, \nabla \tilde{\phi}_{b}\right)_{\tau_{1} \cup \tau_{2}}=0$ when we set $\boldsymbol{v}=\nabla \tilde{\phi}_{b} \in \boldsymbol{U}$ in \eqref{Eqn:weak_2}.
 	
 	According to the assumptions $\boldsymbol{f} \in \boldsymbol{H}(\operatorname{div}, \Omega)$ and $\boldsymbol{f}_{h} \in \mathcal{S}_{h}^{\text {conf }} $, there holds
 	\begin{equation}\label{f-f_h}
 	[\boldsymbol{f}-\boldsymbol{f}_{h}]_{e}=0.
 	\end{equation}
 	
 	Due to the definition of $J_{2}\left(\boldsymbol{u}_{h}\right)$ and  \eqref{f_h-betauh}-\eqref{f-f_h}, we arrive at
 	\begin{eqnarray*}
 		\|J_{2}\left(\boldsymbol{u}_{h}\right)\|_{0,e}
 		&=&
 		\|\phi\|_{0,e}+\|[\left(\boldsymbol{f}-\boldsymbol{f}_{h}\right)]_e\|_{0,e}
 		\\
 		&\lesssim&
 		C_1\sum_{i=1,2} \Big(
 		h_{e}^{1 / 2}\left(\|R_{3}\left(\boldsymbol{u}_{h}\right)\|_{0,\tau_{i}}
 		+
 		\|\nabla \cdot\left(\boldsymbol{f}-\boldsymbol{f}_{h}\right)\|_{0,\tau_{i}}\right)
 		\Big.
 		\\
 		&&\Big.+ h_{e}^{-1 / 2}\left(\left\|\boldsymbol{f}-\boldsymbol{f}_{h}\right\|_{0,\tau_{i}}
 		+
 		\left\|\boldsymbol{u}-\boldsymbol{u}_{h}\right\|_{0,\tau_{i}}\right)
 		\Big).
 	\end{eqnarray*}
 	Hence, combining Lemma \ref{Lemm:R3}, we have
 	\begin{eqnarray*}
 		\sum_{e} h_{e}\|J_{2}
 		\left(\boldsymbol{u}_{h}\right)\|_{0,e}^{2}
 		&\lesssim&
 		C_1\sum_{\tau}\left(h_{\tau_{i}}^{2}\left(
 		\|R_{3}\left(\boldsymbol{u}_{h}\right)\|_{0,\tau_{i}}^2
 		+
 		\|\nabla \cdot\left(\boldsymbol{f}-\boldsymbol{f}_{h}\right)\|_{0,\tau_{i}}^2\right) \right.\\
 		&&\left.+\left\|\boldsymbol{f}-\boldsymbol{f}_{h}\right\|_{0,\tau_{i}}^2
 		+
 		\left\|\boldsymbol{u}-\boldsymbol{u}_{h}\right\|_{0,\tau_{i}}^2\right) \\
 		&\lesssim&
 		C_1\Big(
 		\|\boldsymbol{u}-\boldsymbol{u}_{h}\|_{0,\Omega}^{2}
 		+\|\boldsymbol{f}-\boldsymbol{f}_{h}\|_{0,\Omega}^{2}+h^2_\tau\|\nabla \cdot\left(\boldsymbol{f}-\boldsymbol{f}_{h}\right)\|_{0,\Omega}^{2}
 		\Big).
 	\end{eqnarray*}
 \end{proof}
 
 Next, we present the proof of Theorem \ref{The:eff}.
 \begin{proof}[Proof of Theorem \ref{The:eff}.]\
 	The following result follows immediately by a direct application of Lemmas \ref{Lemm:R1}-\ref{lem:J2},
 	\begin{equation*}
 	\kappa\sum_{e\in\mathcal{T}_h}\sum_{e\in\partial\tau}h^{-1}_e\left\|J_3(\boldsymbol{u}_h)\right\|^2_{0, e}
 	\lesssim
 	\|(\boldsymbol{u},p)-(\boldsymbol{u}_h,p_h)\|_{DG}.
 	\end{equation*}
 \end{proof}

 \section{Numerical experiment}\label{sec:6}
 In this section, we report some experiments to show the performance of the error indicator and the adaptive algorithm AMIPDG.
 We carry out these numerical experiments by using the MATLAB software package iFEM \cite{ChenLiFEM}. In Experiments \ref{Exa:ex1} and \ref{Exa:ex3}, we take $p=\mathrm{curl}~\boldsymbol{u}$.
 
 In Example \ref{Exa:ex1}, we discuss the influence of the penalty parameter $\kappa$ on the error both in  $L^2$ and $\|\cdot\|_{DG}$ norms, and observe the dependency of the condition number of stiffness matrix on $\kappa$. At last, we verify the reliability and efficiency of the constructed error indicator \eqref{eta}.
 
 \begin{example}\label{Exa:ex1}
 	Let $\Omega:=[-1,1] \times[-1,1]$, we construct the following analytical solution of the model \eqref{Equ:1.1}-\eqref{Equ:1.2}:
 	$$
 	\boldsymbol{u}=\left(\begin{array}{c}
 	\cos (\pi x) \sin (\pi y) \\
 	-\cos (\pi y) \sin (\pi x)
 	\end{array}\right)
 	$$
 	with coefficients
 	$$
 	\alpha=1, \quad \beta=1,
 	$$
 	which corresponds to a right hand source term
 	$$
 	\boldsymbol{f}(x, y)=\left(\begin{array}{c}
 	\left(2 \pi^{2}-1\right) \cos (\pi x) \sin (\pi y) \\
 	-\left(2 \pi^{2}-1\right) \cos (\pi y) \sin (\pi x)
 	\end{array}\right).
 	$$
 	It is easy to see that the solution $\boldsymbol{u}$ satisfies the boundary condition $\boldsymbol{u} \cdot \boldsymbol{t}=0$ on $\partial \Omega$.
 \end{example}
 
 In this example, we get an uniform meshes  by partitioning the $x-$ and $y-$ into equally distributed $M(M\geq 2)$ subintervals,  and then dividing one square  into two triangles.  Let $h=1/M$ be mesh sizes for different triangular meshes.
 Firstly, we fixed mesh with $h=1/32$ and report the error estimates in both  $L^2$ and $\|\cdot\|_{DG}$  norm for different penalty parameters $ \kappa = 1, 50, 100, 150$ and $200$ in Table \ref{Tab_111}. We note that $\left\|\boldsymbol{u}-\boldsymbol{u}_{h}\right\|_{0}$ increases slightly as the penalty parameter $\kappa$ increases. On the contrary, $\|\left(p-p_{h}, \boldsymbol{u}-\boldsymbol{u}_{h}\right)\|_{\mathrm{DG}}$ decreases slightly as $\kappa$ increases.
 \begin{table}[ht]
 	\centering\caption{The errors in both $L^2$ and $\|\cdot\|_{DG}$ norms with $h=1/32$.}\label{Tab_111}
 	\begin{tabular}{ccc}
 		\hline
 		$\kappa $	& $\|\boldsymbol{u}-\boldsymbol{u}_h\|_0$ & $\|\left(p-p_{h}, \boldsymbol{u}-\boldsymbol{u}_{h}\right)\|_{\mathrm{DG}}$   \\ \hline
 		1	& 2.00399e-02    & 1.51333e-01    \\
 		50  & 2.00391e-02    & 1.46739e-01     \\
 		100 & 2.00397e-02    & 1.46733e-01     \\
 		150 & 2.00400e-02    & 1.46732e-01     \\
 		200 & 2.00401e-02   & 1.46732e-01     \\
 		\hline
 	\end{tabular}
 \end{table}

 Next, we also use  fixed mesh with $h=1/32$,  and observe the influence of different $\kappa$ on the condition numbers of stiffness matrices  in Table \ref{Tab_2}. It is easy to see that the condition numbers of stiffness matrices increase with the increase of penalty parameters $\kappa$.
 
 \begin{table}[ht]
 	\centering\caption{Condition number of stiffness matrices with different  $\kappa$.}\label{Tab_2}
 	\begin{tabular}{ccccccc}
 		\hline
 		$\kappa$	& 1  & 50    & 100 & 150 & 200 \\\hline
 		Cond &3.40461e+05 &	1.11946e+07 & 2.42288e+07&  3.73104e+07 & 5.04046e+07\\
 		\hline
 	\end{tabular}
 \end{table}
 As a way to balance, in the following numerical tests, we always choose $\kappa=50$.
 
 Now, we verify the reliability and efficiency of the error estimate by comparing $\eta\left(\boldsymbol{u}_{h}, {p}_{h} ; \mathcal{T}_{h}\right)$  with $\|(p-p_{h}, \boldsymbol{u}-\boldsymbol{u}_{h})\|_{\mathrm{DG}}$. The numerical results are given in Table \ref{Tab:eta}  which also provide the values for the effectivity index $\sigma=\|\left(p-p_{h}, \boldsymbol{u}-\boldsymbol{u}_{h}\right)\|_{\mathrm{DG}}/\eta\left(\boldsymbol{u}_{h}, {p}_{h} ; \mathcal{T}_{h}\right)$. We observe that the convergence rate of $\|\left(p-p_{h}, \boldsymbol{u}-\boldsymbol{u}_{h}\right)\|_{\mathrm{DG}}$ is first order and the effectivity index  $\sigma \approx$ 0.286. This shows that the error indicator is effective and reliable, which is \eqref{Eqn:goodind}.
 \noindent
 \begin{table}[ht]
 	\centering\caption{Rate of convergence of the $\|\left(p-p_{h}, \boldsymbol{u}-\boldsymbol{u}_{h}\right)\|_{\mathrm{DG}}$ and $\eta\left(\boldsymbol{u}_{h}, {p}_{h} ; \mathcal{T}_{h}\right)$ on uniform triangular meshes. }\label{Tab:eta}
 	\label{table-2}\vskip 0.1cm
 	\begin{tabular}{{p{1cm}p{2cm}p{2cm}p{2cm}p{2cm}p{2cm}}}\hline
 		\multirow{2}{*} {$h$}& \multicolumn{2}{c}{$\|\left(p-p_{h}, \boldsymbol{u}-\boldsymbol{u}_{h}\right)\|_{\mathrm{DG}}$} &  \multicolumn{2}{c}{$\eta\left(\boldsymbol{u}_{h}, {p}_{h} ; \mathcal{T}_{h}\right)$} &
 		\multicolumn{1}{c}{index  }
 		\\\cline { 2 - 3 } \cline { 4 - 5 }
 		& \multicolumn{1}{c}{Error}  & \multicolumn{1}{c}{order}  & \multicolumn{1}{c}{Error} &\multicolumn{1}{c}{order} & \multicolumn{1}{c}{$\sigma$} \\ \hline
 		1/16 & \multicolumn{1}{c}{2.93259E-01} & \multicolumn{1}{c}{N/A} & \multicolumn{1}{c}{1.02621E-00} & \multicolumn{1}{c}{N/A} & \multicolumn{1}{c}{0.286}\\
 		1/32 & \multicolumn{1}{c}{1.46739E-01} & \multicolumn{1}{c}{0.9989} & \multicolumn{1}{c}{5.13690E-01} & \multicolumn{1}{c}{0.9984} & \multicolumn{1}{c}{0.286}\\
 		1/64 & \multicolumn{1}{c}{7.33830E-02} & \multicolumn{1}{c}{0.9997} & \multicolumn{1}{c}{2.56920E-01} & \multicolumn{1}{c}{0.9996} & \multicolumn{1}{c}{0.286}\\
 		1/128 & \multicolumn{1}{c}{3.66932E-02} & \multicolumn{1}{c}{0.9999} & \multicolumn{1}{c}{1.28470E-02} & \multicolumn{1}{c}{0.9999} & \multicolumn{1}{c}{0.286}\\
 		1/256 & \multicolumn{1}{c}{1.83468E-02} & \multicolumn{1}{c}{1.0000} & \multicolumn{1}{c}{6.42365E-02} & \multicolumn{1}{c}{1.0000} & \multicolumn{1}{c}{0.286}  \\
 		\hline
 	\end{tabular}
 \end{table}
 
 Noting that we only consider uniform meshes and the constant coefficients  in Example \ref{Exa:ex1}. Next we test adaptive meshes and the jump coefficients. Our adaptive cycle can be implemented by the following algorithm:
 \begin{algorithm}  	
 	\caption{An adaptive mixed interior penalty discontinuous method (AMIPDG) cycle} \label{ALG1}
 	\begin{algorithmic}
 		\STATE {	\textbf{Input} initial triangulation $\mathcal{T}_0$; data $\boldsymbol{f}$; tolerance tol; marking parameter $\theta\in(0,1)$.}
 		
 		\STATE{   	\textbf{Output} a triangulation $\mathcal{T}_J$; MIPDG solution $(\boldsymbol{u}_J,p_J)$.}
 		
 		\STATE{ 	$\eta=1;k=0;$}
 		
 		\STATE{\textbf{while} $\eta\geq tol$}
 		
 		\STATE{	~~~~\textbf{SOLVE} solve  discrete varational problem \eqref{Eqn:dweak_1}-\eqref{Eqn:dweak_2} on $\mathcal{T}_k$ to get the solution $(\boldsymbol{u}_k,p_k)$;}
 		
 		\STATE{	~~~~\textbf{ESTIMATE} compute the posterior error estimator $\eta=\eta(\boldsymbol{u}_k,p_k,\mathcal{T}_k)$ by using \eqref{eta};}
 		
 		\STATE{	~~~~\textbf{MARK} seek a minimum cardinality $\mathcal{M}_{k} \subset \mathcal{T}_{k}$ such that
 			\begin{equation*}\label{mark}
 			\eta^{2}\left(\boldsymbol{u}_{k},p_k, \mathcal{M}_{k}\right) \geq \theta \eta^{2}\left(\boldsymbol{u}_{k},p_k, \mathcal{T}_{k}\right);
 			\end{equation*}}
 		
 		\STATE{	~~~~\textbf{REFINE} Bisect elements in $\mathcal{M}_k$ and the neighboring elements to form a conforming $\mathcal{T}_{k+1}$;}
 		
 		\STATE{	~~~~$k=k+1$; }
 		
 		\STATE{\textbf{end}}
 		
 		\STATE{$u_J=u_k;~p_J=p_k;~\mathcal{T}_J=\mathcal{T}_k;$}
 	\end{algorithmic}
 \end{algorithm}
 
 \begin{example}\label{Exa:ex3}
 	Let $\Omega:=[-1,1] \times[-1,1]$, we construct the following analytical solution of the model \eqref{Equ:1.1}-\eqref{Equ:1.2}
 	$$
 	\boldsymbol{u}=\left(\begin{array}{c}
 	\frac{y\left(x^{2}-1\right)\left(y^{2}-1\right)}{x^{2}+y^{2}+0.02} \\
 	\frac{-x\left(x^{2}-1\right)\left(y^{2}-1\right)}{x^{2}+y^{2}+0.02}
 	\end{array}\right),
 	$$
 	with the jump coefficients
 	$$
 	\begin{array}{ll}
 	\alpha=1.0, \beta=1.0, & \text { on } \Omega_{1}, \\
 	\alpha=1.0, \beta=100, & \text { on } \Omega \backslash \Omega_{1},
 	\end{array}
 	$$
 	where $\Omega_{1}=(-0.5,0.5)^{2}$(see the left of Figure \ref{fig:ex3_mesh_figure}). Note that the solution $\boldsymbol{u}$ satisfies the condition $ \boldsymbol{u}\cdot\boldsymbol{t}=0$ on $\partial \Omega$.
 \end{example}
 
 Here, we can observe that $\boldsymbol{u}$ has a relatively large change at $(0,0)$. Figure \ref{fig:ex3_figure} shows the contours of the exact solution.
 \begin{figure}[ht]
 	\begin{minipage}[t]{0.48\linewidth}
 		\centering
 		\includegraphics[height=5.2cm,width=6.8cm]{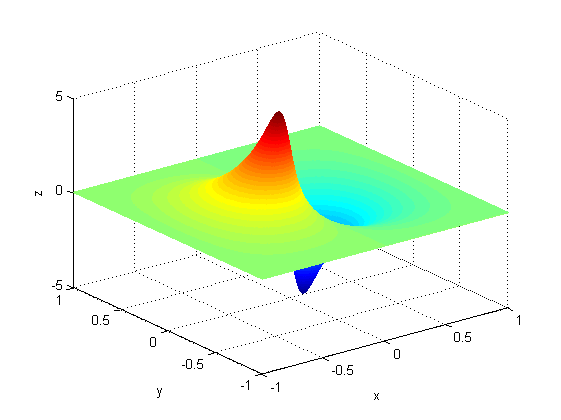}
 	\end{minipage}
 	\begin{minipage}[t]{0.48\linewidth}
 		\centering
 		\includegraphics[height=5.2cm,width=6.8cm]{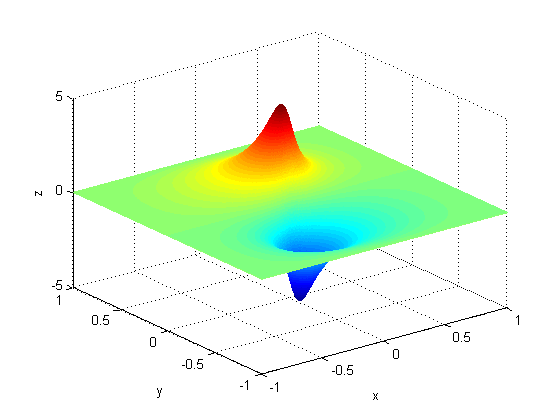}
 	\end{minipage}
 	\caption{Left: the first component of the analytical solution. Right: the second component of the analytical solution.}\label{fig:ex3_figure}
 \end{figure}
 
 We get an initial mesh $\mathcal{T}_0$ by partitioning the $x$- and $y$-axes into equally distributed eight subintervals and then dividing one square into two triangles, see the middle of Figure \ref{fig:ex3_mesh_figure}.
 The right of Figure \ref{fig:ex3_mesh_figure} shows an adaptively refined mesh with marking parameter $\theta=0.5$ after $k=8$, and we can see that the grid is locally refined near both the origin and at $\partial\Omega_1$.
 
 \begin{figure}[ht]
 	\subfigure{}
 	\begin{minipage}[t]{4cm}
 		\centering
 		\includegraphics[height=3cm,width=3.4cm]{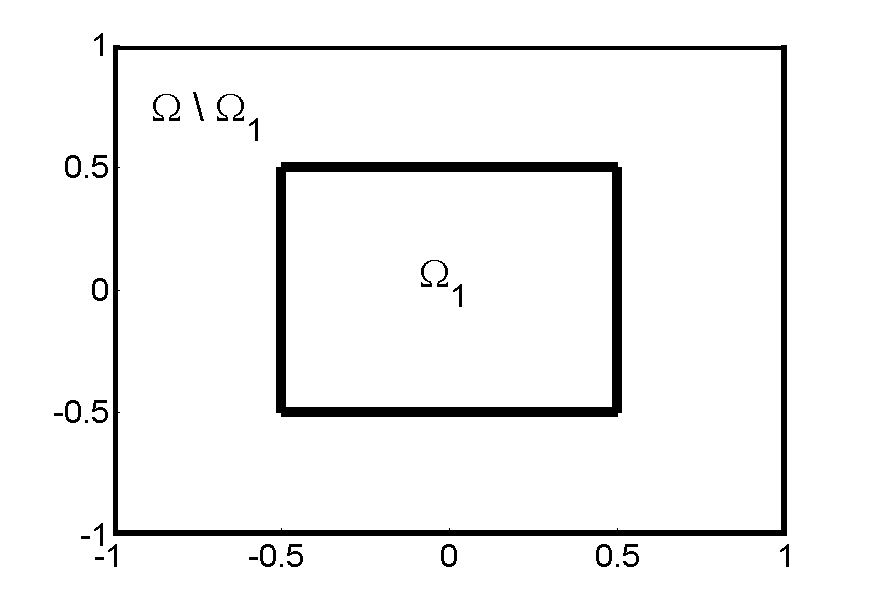}
 	\end{minipage}
 	\subfigure{}
 	\begin{minipage}[t]{4cm}
 		\centering
 		\includegraphics[height=3cm,width=4.0cm]{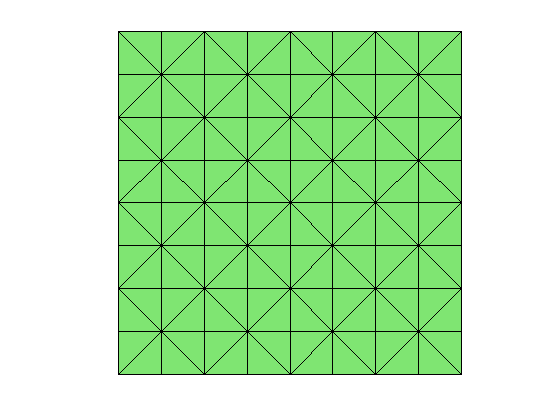}
 	\end{minipage}
 	\subfigure{}
 	\begin{minipage}[t]{4cm}
 		\centering
 		\includegraphics[height=3cm,width=4.0cm]{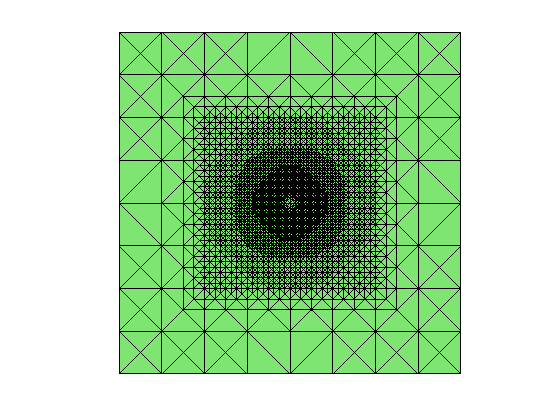}
 	\end{minipage}
 	\caption{Left: regional diagram. Middle: the initial mesh with 512 DoFs. Right: the adaptive mesh($\theta=0.5$) with 20416  DoFs after 8 refinements.}\label{fig:ex3_mesh_figure}
 \end{figure}

 Figure \ref{fig:ex3_res} shows the curves of $\ln N-\ln\|(p-p_k,\boldsymbol{u}-\boldsymbol{u}_k)\|_{DG}$ for different marking parameters $\theta=0.3, 0.5$ and $0.7$, where $N$ is the number of degrees of freedom. The curves indicate the convergence and the quasi-optimality of the adaptive algorithm AMIPDG of the energy error $\|\left(p-p_{h}, \boldsymbol{u}-\boldsymbol{u}_{h}\right)\|_{\mathrm{DG}}$, i.e.
 \begin{equation*}
 \|(p-p_k,\boldsymbol{u}-\boldsymbol{u}_k)\|_{DG}\lesssim  N^{-1/2}.
 \end{equation*}
 From these curves, it seems that the convergence rate are robust for $\theta$ changing from $0.3$ to $0.7$.

 \begin{figure}[ht]
 	\centering
 	\includegraphics[height=8cm,width=10cm]{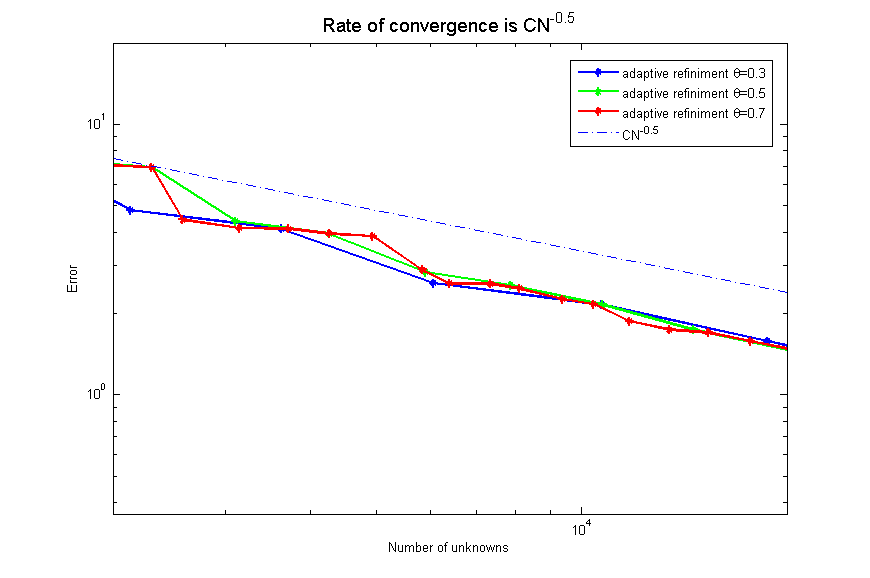}
 	\caption{Quasi optimality of the AMIPDG of the error $\|\left(p-p_{h}, \boldsymbol{u}-\boldsymbol{u}_{h}\right)\|_{\mathrm{DG}}$ with different marking parameters $\theta$.}\label{fig:ex3_res}
 \end{figure}
 
 The examples above are considered convex domain and the jump coefficients. Finally we consider variable coefficients, `L-shaped' domain and unknown exact solution.
 
 \begin{example}\label{Exa:ex2}
 	Let domain $\Omega=(-1,1)^{2}/([0,1)\times[0,1))$
 	and let the variable coefficients
 	$$
 	\alpha=\frac{1}{1+x^{2}+y^{2}}, \quad \beta=\left(\begin{array}{cc}
 	1+x^{2} & x y \\
 	x y & 1+y^{2}
 	\end{array}\right).
 	$$
 	We set the homogeneous Dirichlet boundary condition $\boldsymbol{u}\cdot\boldsymbol{t}=0$
 	on $\partial\Omega$, the source $f=(\frac{1}{x^2+y^2+0.01},\frac{1}{x^2+y^2+0.01})$.
 \end{example}
 
 We get an initial mesh $\mathcal{T}_0$ by partitioning the $x$- and $y$-axes into equally distributed eight subintervals and then dividing one square into two triangles, see the left of Figure \ref{fig:ex2_mesh_figure}.
 The right of Figure \ref{fig:ex2_mesh_figure} shows an adaptively refined mesh with marking parameter- $\theta=0.5$ after $k=8$. The grid is locally refined near the origin.
 
 \begin{figure}[h]
 	\begin{minipage}[t]{0.48\linewidth}
 		\centering
 		\includegraphics[height=5cm,width=6cm]{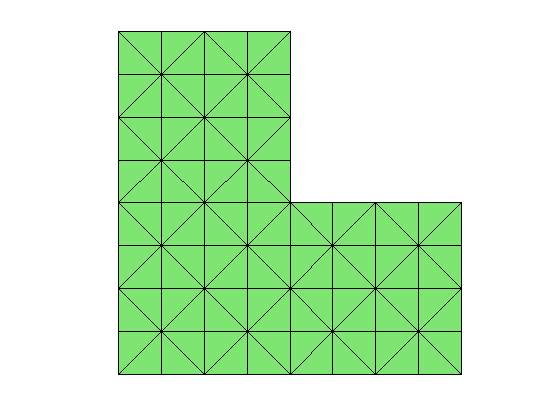}
 	\end{minipage}
 	\begin{minipage}[t]{0.48\linewidth}
 		\centering
 		\includegraphics[height=5cm,width=6cm]{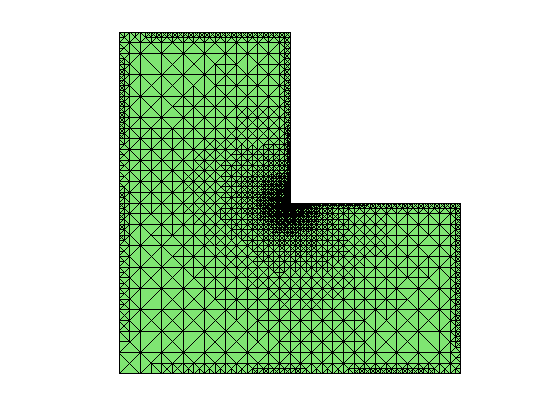}
 	\end{minipage}
 	\caption{Left: the initial mesh with 384 DoFs. Right: the adaptive mesh($\theta=0.5$) with 16032  DoFs after 8 refinements.}\label{fig:ex2_mesh_figure}
 \end{figure}
 
 The Figure \ref{fig:ex2_res} shows the curves of $\ln N -\ln\eta\left(\boldsymbol{u}_{k}, {p}_{k} ; \mathcal{T}_{k}\right)$ for parameters $\theta=0.3,0.5,0.7$. The curves indicate the convergence and the quasi-optimality of the adaptive algorithm AMIPDG of  $\eta\left(\boldsymbol{u}_{k}, {p}_{k} ; \mathcal{T}_{k}\right)$.
 
 \begin{figure}[ht]
 	\centering
 	\includegraphics[height=8cm,width=10cm]{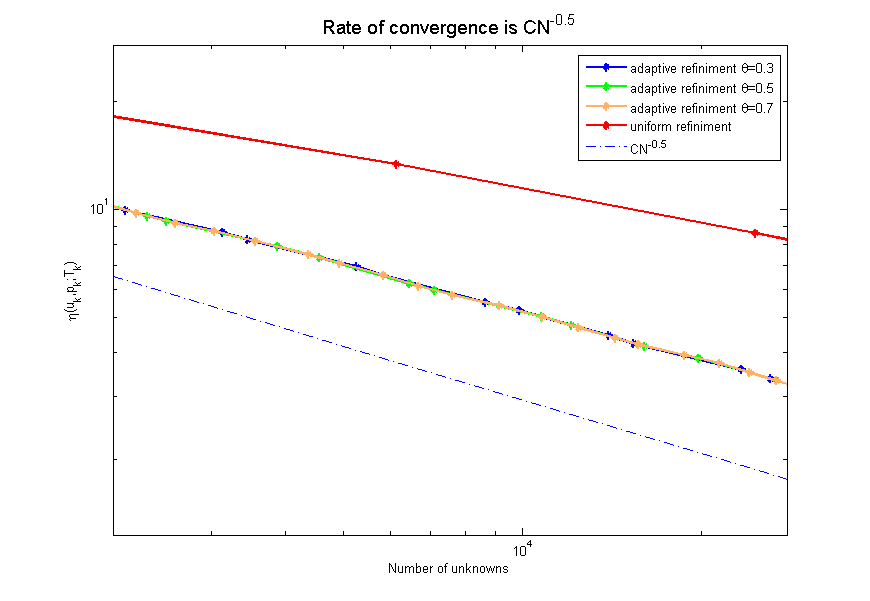}
 	\caption{Quasi optimality of the AMIPDG of the error $\eta\left(\boldsymbol{u}_{k}, {p}_{k} ; \mathcal{T}_{k}\right)$ with different marking parameters $\theta$.}\label{fig:ex2_res}
 \end{figure}
 
 \section*{Acknowledgments}
 The authors are supported by the National Natural Science Foundation of China (Grant No. 12071160). The second author is also supported by the National Natural Science Foundation of China (Grant No. 11901212).


\end{document}